\newcommand{\RNum}[1]{\lowercase\expandafter{\romannumeral #1\relax}}
\newtheorem{thm}{Theorem}[section]
\newtheorem{cor}[thm]{Corollary}
\newtheorem{prop}[thm]{Proposition}
\newtheorem{rem}[thm]{Remark}
\newtheorem{thm-con}[thm]{Theorem-Conjecture}
\numberwithin{equation}{section}
\theoremstyle{definition}
\newtheorem{defn}[thm]{Definition}
\newcommand{\prim}{a}
\newcommand{\F}{\mathbb F}
\begin{document}
\title[On Dillon's  APN hexanomials]{On the Classification of Dillon's APN Hexanomials}
\author{Daniele Bartoli}
\address{Department of Mathematics and Computer Science, University of Perugia,  06123 Perugia, Italy}
\email{daniele.bartoli@unipg.it}
\author{Giovanni Giuseppe Grimaldi}
\address{Department of Mathematics and Computer Science, University of Perugia,  06123 Perugia, Italy}
\email{giovannigiuseppe.grimaldi@unipg.it}
\author{Pantelimon St\u anic\u a}
\address{Applied Mathematics Department, Naval Postgraduate School,
Monterey, CA 93943, USA}
\email{pstanica@nps.edu}

\date{\today}

\keywords{Finite fields,  permutation polynomials, varieties, irreducible components}
\subjclass[2020]{11G20, 11T06, 12E20, 14Q10}

\begin{abstract}
In this paper, we undertake a systematic analysis of a class of hexanomial functions over finite fields of characteristic 2 proposed by Dillon in 2006 as potential candidates for almost perfect nonlinear (APN) functions, pushing the analysis a lot further than what has been done via the partial APN concept in Budaghyan, Kaleyski, Riera, and St\u anic\u a (Des. Codes Cryptogr. 88, 2020),  and Chase and Lison\v ek (Cryptogr. Commun. 13,  2021). These functions, defined over $\mathbb{F}_{q^2}$ where $q=2^n$, have the form
$$F(x) = x(Ax^2 + Bx^q + Cx^{2q}) + x^2(Dx^q + Ex^{2q}) + x^{3q}.$$
Using algebraic number theory and methods on algebraic varieties over finite fields, we establish necessary conditions on the coefficients $A, B, C, D, E$ that must hold for the corresponding function to be APN. Our main contribution is a comprehensive case-by-case analysis that systematically excludes large classes of Dillon's hexanomials from being APN based on the vanishing patterns of certain key polynomials in the coefficients. Through a combination of number theory, algebraic-geometric techniques and computational verification, we identify specific algebraic obstructions—including the existence of absolutely irreducible components in associated varieties and degree incompatibilities in polynomial factorizations—that prevent these functions from achieving optimal differential uniformity. Our results significantly narrow the search space for new APN functions within this family and provide a theoretical roadmap applicable to other classes of potential APN functions. We complement our theoretical work with extensive computations. Through exhaustive searches on $\mathbb{F}_{2^2}$ and $\mathbb{F}_{2^4}$ and random sampling on $\mathbb{F}_{2^6}$ and $\mathbb{F}_{2^8}$, we identified hundreds of APN hexanomials. Our classification, based on complete CCZ-equivalence testing, reveals that while many instances exist, they belong to very few distinct classes. For $q \in \{2,4\}$, all discovered APN functions are CCZ-equivalent to the known Budaghyan-Carlet (BC) family~(Budaghyan-Carlet, IEEE Trans. Inf. Th., 2008).  However, for larger dimensions, none of them seem to be equivalent to the BC family.
\end{abstract}

\maketitle

\section{Introduction and motivation}

Let $q=2^m,$ $m \in \mathbb{N},$ and denote by $\mathbb{F}_q$ the finite field with $q$ elements. For any positive integer $n$, we denote by $\F_q[X_1, \ldots, X_n]$, the ring of polynomials in $n$ indeterminates over finite field $\F_q$.

The security of a block cipher depends upon the immunity of its substitution boxes against many cryptographic attacks. For example, a low differential uniformity~\cite{KN93} is needed in order to resist the differential attacks~\cite{BS91}. For a  positive integer $n>0$, the differential uniformity of an $(n,n)$-function $F:\F_{2^n}\to\F_{2^n}$ is defined as the maximum number of solutions $x\in \mathbb{F}_{2^n}$ of the differential equation $F(x+a)+F(x)=b$, where $a\neq 0,b\in\F_{2^n}$.   The lowest possible differential uniformity of functions over finite fields of even characteristic is $2$ and such functions are called almost perfect nonlinear (APN).

Almost perfect nonlinear  functions play a fundamental role in cryptography, particularly in the design of block ciphers and stream ciphers where they provide optimal resistance against differential cryptanalysis. These functions, defined over finite fields of characteristic 2, are characterized by the property that each nonzero derivative takes each value at most twice. The search for new families of APN functions and the classification of existing ones remains one of the most active areas of research in finite field theory and cryptography.

In 2006, Dillon~\cite{Di06} suggested investigating a specific class of hexanomials (polynomials with six terms) as potential candidates for APN functions. These functions, defined over $\mathbb{F}_{q^2}$ where $q = 2^n$, have the form:
\begin{equation}
F(x) = x(Ax^2 + Bx^q + Cx^{2q}) + x^2(Dx^q + Ex^{2q}) + Gx^{3q}.
\end{equation}

The appeal of Dillon's proposal lies in the rich algebraic structure of these hexanomials, which generalizes several known constructions while potentially harboring new families of APN functions. Indeed, Budaghyan and Carlet~\cite{BC08} constructed an infinite family of APN functions of this type in 2008, demonstrating that Dillon's intuition was well-founded. However, despite this early success and subsequent investigations by various authors~\cite{BKRS20}, no systematic analysis of the entire class had been undertaken prior to this work.

The primary challenge in studying APN functions lies in the complexity of the defining condition: a function $f$ is APN if and only if for each nonzero $a \in \mathbb{F}_{q^2}$, the equation $f(x+a) + f(x) = f(y+a) + f(y)$ has only the trivial solutions $x = y$ or $x = y + a$. For Dillon's hexanomials, this condition translates into a highly nonlinear system of polynomial equations whose solutions determine whether the function achieves the desired cryptographic properties.

Our approach transforms this problem into the study of algebraic varieties over finite fields. Janwa and Wilson were the first to employ algebraic varieties to study APN functions, see \cite{Janwa}.  For a deeper introduction
to algebraic varieties we refer the interested reader to \cite{Ha77}. By reformulating the APN condition as a question about the existence of certain algebraic varieties and their irreducible components, we can apply powerful tools from algebraic geometry to obtain results that would be difficult or impossible to achieve through direct computational methods alone. This geometric perspective not only provides theoretical insights but also leads to practical algorithms for determining when specific instances of Dillon's hexanomials fail to be APN.

The main contribution of this paper is a comprehensive analysis that systematically excludes large classes of Dillon's hexanomials from being APN; see Theorem \ref{thm:summary6.11}. Through our algebraic-geometric approach, we establish necessary conditions on the coefficients $A, B, C, D, E$ that must hold for the corresponding function to have any chance of being APN. Our results significantly narrow the search space for new APN functions within this family and provide a theoretical landscape that may be applicable to other classes of potential APN functions.

The organization of our investigation follows a case-by-case analysis based on the vanishing patterns of certain key polynomials in the coefficients. We begin with the simpler case where $B = 0$, which allows us to establish our main techniques, before proceeding to the more complex general case where $B \neq 0$. Throughout, we maintain a focus on constructive proofs that not only establish non-APN behavior but also identify the specific algebraic obstructions that prevent these functions from achieving optimal differential uniformity.

To complement our theoretical analysis, we conducted extensive computational searches for APN functions within this family for several small field sizes. Using the SageMath code detailed in~\cite{GithubPS25}, we performed exhaustive searches over $\mathbb{F}_{2^2}$ and $\mathbb{F}_{2^4}$. For larger fields, namely $\mathbb{F}_{2^6}$ and $\mathbb{F}_{2^8}$, we performed large-scale random sampling of the coefficient space. The results, summarized in Appendix A, confirm that APN instances,  do exist beyond the known Budaghyan-Carlet family. Our classification, based on CCZ-invariants, reveals a rich structure of inequivalent APN functions, underscoring the significance of this class.

\section{A key theorem}
\label{sec2}

The aim in our paper is to determine the polynomials of the type
$$f_{A,B,C,D,E}(x):= x(Ax^2+Bx^q+Cx^{2q})+x^2(D x^q+E x^{2q})+x^{3q}\in \mathbb{F}_{q^2}[x] $$
which are APN (or APN permutations), or have no chance of being APN.

As usual, $f_{A,B,C,D,E}(x)$ is APN if and only if the unique solutions of 
$$f_{A,B,C,D,E}(x+a)+f_{A,B,C,D,E}(x)=f_{A,B,C,D,E}(y+a)+f_{A,B,C,D,E}(y)$$
are only $a=0$, $x=y$, or $x=y+a$. 

The equation above reads as
$$(Aa+a^{2q}E+a^qD)(x+y)^2 + (a^2A + a^{2q}C + a^qB)(x+y) + (a^2E + 
    aC + a^q)(x+y)^{2q} + (a^2D + aB + a^{2q})(x+y)^q=0.$$

Via $(x,y)\mapsto(x+y,y)$, we conclude that $f_{A,B,C,D,E}(x)$ is APN if and only if 
$$(Aa+a^{2q}E+a^qD)x^2 + (a^2A + a^{2q}C + a^qB)x + (a^2E + 
    aC + a^q)x^{2q} + (a^2D + aB + a^{2q})x^q=0$$
    has only solutions $a=0$, $x=0$, or $x=a$. 

Our first goal is to provide instances of $A,B,C,D,E\in \mathbb{F}_{q^2}$ for which the above equation has solutions beyond the trivial ones.

To this end we consider the following system

$$\begin{cases}
(Aa+a^{2q}E+a^qD)x^2 + (a^2A + a^{2q}C + a^qB)x\\
\hspace{1 cm}+ (a^2E +  aC + a^q)x^{2q} + (a^2D + aB + a^{2q})x^q=0\\
    (A^qa^q+a^{2}E^q+aD^q)x^{2q} + (a^{2q}A^q + a^{2}C^q + aB^q)x^q\\  \hspace{1 cm} + (a^{2q}E^q + a^qC^q + a)x^{2} + (a^{2q}D^q + a^qB^q + a^{2})x=0.
\end{cases}
$$
In order to prove that $f_{A,B,C,D,E}(x)$ is not APN, we need to show the existence of at least a pair $(a,x)\in \mathbb{F}_{q^2}^2$, $xa(x+a)\neq 0$, satisfying the above equations. Rewrite $a=Z_0+iZ_1$, $x=X_0+iX_1$, where $\{1,i\}$ is an $\mathbb{F}_q$-basis of $\mathbb{F}_{q^2}$. The two equations above, in terms of the variables $Z_0,Z_1,X_0,X_1$, define a variety $\mathcal{V}$ in $\mathbb{A}^4(\mathbb{F}_{q^2})$ which is $\mathbb{F}_q$-rational (i.e. the ideal generated by the two equations is fixed by the Frobenius $\varphi_q$). Consider the following change of variables $\psi$ defined by 
$$(X_0+iX_1,X_0+i^qX_1,Z_0+iZ_1,Z_0+i^qZ_1)\mapsto(X_0,X_1,Z_0,Z_1).$$
It defines an $\mathbb{F}_{q^2}$-affine equivalence between $\mathcal{V}$ and a variety $\mathcal{W}$ in $\mathbb{A}^4(\mathbb{F}_{q^2})$  defined by 
$$\begin{cases}
(AZ_0+Z_1^2 E+Z_1 D)X_0^2 + (Z_0^2A + Z_1^2C + Z_1B)X_0\\
\hspace{1 cm}+ (Z_0^2E + 
    Z_0C + Z_1)X_1^2 + (Z_0^2D + Z_0B + Z_1^2)X_1=0\\
    (A^qZ_1+Z_0^{2}E^q+Z_0D^q)X_1^2 + (Z_1^2A^q + Z_0^{2}C^q + Z_0B^q)X_1\\ 
    \hspace{1 cm}+ (Z_1^2E^q + Z_1C^q + Z_0)X_0^2 + (Z_1^2D^q + Z_1 B^q + Z_0^{2})X_0=0.
\end{cases}
$$
Notably, there is a correspondence between absolutely irreducible components of  $\mathcal{V}$ and those of $\mathcal{W}$. Also, absolutely irreducible components of $\mathcal{V}$ fixed by the Frobenius (i.e. $\mathbb{F}_q$-rational) correspond to absolutely irreducible components of $\mathcal{V}$ fixed by 
$$\phi (A,B,C,D,E,X_0,X_1,Z_0,Z_1)=(A^q,B^q,C^q,D^q,E^q,X_1,X_0,Z_1,Z_0).$$

Recall that the APN condition requires that for $a \neq 0$, the only solutions to the derivative equation are $x=0$ and $x=a$. In terms of our coordinates $X_0, X_1, Z_0, Z_1$, these trivial solutions correspond to specific hyperplanes in the affine space $\mathbb{A}^4(\mathbb{F}_{q^2})$. Additionally, degenerate cases where $a$ is not a valid parameter correspond to $Z_0=0$ or $Z_1=0$. We formalize this locus as follows.

\begin{defn}[Forbidden planes]
\label{def:forbidden_locus}
The \textit{forbidden locus}, denoted by $\Pi$, is the union of the six hyperplanes $\pi_1, \dots, \pi_6$ in $\mathbb{A}^4(\mathbb{F}_{q^2})$ defined by the trivial APN solutions and the degenerate parameters:
\begin{align*}
    \pi_1 &: X_0 = 0, & \pi_2 &: X_1 = 0, \\
    \pi_3 &: X_0 + Z_0 = 0, & \pi_4 &: X_1 + Z_1 = 0, \\
    \pi_5 &: Z_0 = 0, & \pi_6 &: Z_1 = 0.
\end{align*}
Consequently, $f_{A,B,C,D,E}$ is APN if and only if the set of $\mathbb{F}_q$-rational points of the variety $\mathcal{W}$ is contained in $\Pi$.
\end{defn}

We recall a refinement of the classical Lang-Weil bound, which will be crucial for proving a non-existence result for sufficiently large $q$.
\begin{thm}\textup{\cite{MR2206396}}
\label{th:cafmat}
Let $\mathcal{V}\subset \mathbb{A}^N(\mathbb{F}_q)$ be an $\mathbb{F}_q$-irreducible variety of dimension $r$ and degree $d$. If $q>2(r+1)d^2$ then
$$
|\#\mathcal{V}(\mathbb{F}_q)-q^r|\le (d-1)(d-2)q^{r-1/2}+5d^{\frac{13}{3}}q^{r-1}.
$$
\end{thm}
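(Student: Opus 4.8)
The plan is to prove this explicit Lang--Weil estimate by reducing to the one-dimensional case, where Weil's theorem supplies the square-root savings, and then climbing in dimension by a controlled hyperplane-slicing argument. The substance of the bound lies in the absolutely irreducible case, which is also the only situation genuinely compatible with the main term $q^r$: a variety that is merely $\mathbb{F}_q$-irreducible but splits over $\overline{\mathbb{F}_q}$ into $s\ge 2$ Frobenius-conjugate components has its $\mathbb{F}_q$-points confined to the pairwise intersections of those components (a point fixed by Frobenius and lying on one component must lie on a distinct conjugate), hence only $O(d^2 q^{r-1})$ of them. I would therefore reduce to, and work throughout with, an absolutely irreducible $\mathcal{V}\subset\mathbb{A}^N(\mathbb{F}_q)$ of dimension $r$ and degree $d$.

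For the base case $r=1$, I would realize the curve $\mathcal{V}$ as a birational plane model of degree at most $d$ via a generic linear projection. The genus--degree inequality gives geometric genus $g\le (d-1)(d-2)/2$, and applying the Hasse--Weil bound to the smooth projective model $\widetilde{\mathcal{V}}$ yields
$$\bigl|\#\widetilde{\mathcal{V}}(\mathbb{F}_q)-(q+1)\bigr|\le 2g\sqrt{q}\le (d-1)(d-2)\sqrt{q}.$$
The remaining work is to pass from this count on the smooth model back to $\#\mathcal{V}(\mathbb{F}_q)$: the discrepancies come from points at infinity, from singular points of the plane model, and from the non-injective fibres of the birational projection, each bounded by $O(d^2)$ via Bézout. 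These corrections are exactly what produce the lower-order term of size $d^{13/3}q^{r-1}$ at the bottom of the induction.

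For the inductive step $r\ge 2$, the idea is to slice by a pencil of parallel hyperplanes $\{H_c\}_{c\in\mathbb{F}_q}$ partitioning $\mathbb{A}^N$, so that
$$\#\mathcal{V}(\mathbb{F}_q)=\sum_{c\in\mathbb{F}_q}\#\bigl(\mathcal{V}\cap H_c\bigr)(\mathbb{F}_q).$$
After a generic linear change of coordinates, an effective Bertini-type irreducibility theorem guarantees that all but a bounded number of slices $\mathcal{V}\cap H_c$ are absolutely irreducible of dimension $r-1$ and degree at most $d$; applying the inductive hypothesis to each good slice contributes $q\cdot q^{r-1}=q^r$ as the main term and propagates the error terms, while the boundedly many exceptional slices are controlled by the trivial Bézout bound $\#(\mathcal{V}\cap H_c)(\mathbb{F}_q)\le d\,q^{r-1}$. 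The hypothesis $q>2(r+1)d^2$ ensures both that the pencil contains enough hyperplanes for this averaging to be meaningful and that the accumulated errors never overwhelm the main term.

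The main obstacle, and the heart of this refinement, is making the Bertini reduction \emph{effective and uniform} rather than merely generic: one must exhibit an explicit coordinate system in which enough hyperplane sections stay absolutely irreducible, bound the degree of the ``bad locus'' of exceptional hyperplanes, and keep uniform control of the behaviour at infinity as the dimension grows. Tracking these degrees through Bézout's theorem at every level of the induction is precisely what pins down the constant $5$ and the exponent $13/3$; a naive induction would lose control of the lower-order term. I expect this uniform effective Bertini step, together with the careful bookkeeping of the error as the slicing is iterated, to be by far the most delicate part of the argument.
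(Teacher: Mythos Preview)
The paper does not prove this theorem at all: it is quoted verbatim from Cafure--Matera \cite{MR2206396} and used as a black box in the proof of Theorem~\ref{Th:Key}. There is therefore no ``paper's own proof'' to compare your proposal against.

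That said, your sketch is a faithful outline of how the Cafure--Matera argument actually proceeds in the cited reference: reduction to the absolutely irreducible case, Weil's bound for curves combined with the genus--degree inequality, and an effective Bertini-type hyperplane slicing to climb in dimension, with careful bookkeeping of the lower-order error. You are also right that the effective Bertini step and the tracking of constants through B\'ezout are where the work lies. If your intent was to supply a self-contained proof where the paper merely cites, your plan is sound in shape; but be aware that pinning down the exact constant $5$ and exponent $13/3$ requires the specific effective irreducibility estimates of Cafure--Matera, not just a generic Bertini statement, and your proposal leaves that as a black box of its own.
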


The following is the key result of this paper. 

\begin{thm}\label{Th:Key}
    Suppose that there exists a variety $\mathcal{C}$ contained in $\mathcal{W}$ that is absolutely irreducible and fixed by 
    $\phi$, where $\phi(A,B,C,D,E,X_0,X_1,Z_0,Z_1)=(A^q,B^q,C^q,D^q,E^q,X_1,X_0,Z_1,Z_0)$  and not contained in the  hyperplanes $X_0=0$, $X_1=0$, $Z_0=X_0$, $Z_1=X_1$, $Z_0=0$, $Z_1=0$.
    Then, if $q$ is large enough $f_{A,B,C,D,E}(x)$ is not APN.
    In particular, if the dimension of $\mathcal{C}$ is $2$ and $q \geq 2^{20}$,  then $f_{A,B,C,D,E}(x)$ is not APN.
    Conversely, if $\mathcal{W}$ is contained in $\Pi$, then $f_{A,B,C,D,E}(x)$ is  APN.
\end{thm}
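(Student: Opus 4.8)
The plan is to turn the APN question into a point-count on $\mathcal{V}$ and to feed the resulting absolutely irreducible component into the Lang--Weil estimate of Theorem~\ref{th:cafmat}. Recall that $f_{A,B,C,D,E}$ fails to be APN exactly when the displayed linearized equation has a solution $(a,x)\in\mathbb{F}_{q^2}^2$ with $xa(x+a)\neq 0$; writing $a=Z_0+iZ_1$, $x=X_0+iX_1$, these are precisely the $\mathbb{F}_q$-rational points of $\mathcal{V}$ lying off the six trivial loci $x=0$, $a=0$, $x=a$. So it is enough to exhibit a single such point. First I would invoke the dictionary recalled before the statement: a $\phi$-fixed absolutely irreducible component $\mathcal{C}\subseteq\mathcal{W}$ corresponds, through the $\mathbb{F}_{q^2}$-equivalence $\psi$, to an $\mathbb{F}_q$-rational (Frobenius-fixed) absolutely irreducible component $\mathcal{C}'\subseteq\mathcal{V}$ of the same dimension $r$ and degree $d$; since $\mathcal{C}$ avoids $\pi_1,\dots,\pi_6$, its image $\mathcal{C}'$ is not contained in any trivial locus.

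Next I would apply Theorem~\ref{th:cafmat} to the $\mathbb{F}_q$-irreducible (indeed absolutely irreducible) variety $\mathcal{C}'$: for $q>2(r+1)d^2$ this gives $\#\mathcal{C}'(\mathbb{F}_q)\ge q^r-(d-1)(d-2)q^{r-1/2}-5d^{13/3}q^{r-1}$. I then remove the trivial points: for each $i$, the intersection $\mathcal{C}'\cap\pi_i$ is a proper closed subvariety of the irreducible $\mathcal{C}'$, hence of dimension $\le r-1$ and degree $\le d$, so it carries $O(q^{r-1})$ rational points. Subtracting the six contributions leaves at least $q^r-(d-1)(d-2)q^{r-1/2}-\bigl(5d^{13/3}+O(d)\bigr)q^{r-1}$ nontrivial $\mathbb{F}_q$-points, which is positive for all sufficiently large $q$; each such point is a genuine non-APN witness, proving the general ``$q$ large enough'' claim (for any $r$, since the main term $q^r$ only dominates more strongly when $r>2$).

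To pin down the explicit threshold in the case $r=2$, I would bound the degree. Each of the two equations cutting out $\mathcal{W}$ has total degree $4$ in $X_0,X_1,Z_0,Z_1$ (the top terms being of type $Z_1^2E\,X_0^2$ and $Z_0^2E\,X_1^2$), so by B\'ezout every component of $\mathcal{W}$, and hence $\mathcal{C}$ and $\mathcal{C}'$, has degree $d\le 16$. With $r=2$, $d\le 16$ the hypothesis $q>2(r+1)d^2=1536$ holds, and a short numerical check shows the negative terms stay below $q^2$ once $q\ge 2^{20}$: writing the count as $q\bigl(q-(d-1)(d-2)q^{1/2}-5d^{13/3}-O(d)\bigr)$ and using $2^{20}-210\cdot 2^{10}-5\cdot 16^{13/3}-O(1)>0$, one gets a strictly positive (if tight) slack, so a nontrivial point exists and $f_{A,B,C,D,E}$ is not APN. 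For the converse, suppose $\mathcal{W}\subseteq\bigcup_{i=1}^6\pi_i$. Transporting through $\psi$, every point of $\mathcal{V}$ then lies in one of the loci $x=0$, $a=0$, $x=a$; in particular every $\mathbb{F}_q$-rational point of $\mathcal{V}$, equivalently every $\mathbb{F}_{q^2}$-solution $(a,x)$ of the APN equation, is trivial, so that equation has only the solutions $a=0$, $x=0$, $x=a$ and $f_{A,B,C,D,E}$ is APN.

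The hard part will be the explicit bookkeeping in the $r=2$ case rather than the ideas: one must control the degrees of the components of $\mathcal{W}$ and, simultaneously, bound the $\mathbb{F}_q$-point counts on their intersections with the six forbidden hyperplanes tightly enough that the Lang--Weil main term $q^{r}$ provably exceeds all corrections at $q=2^{20}$, since with $d\le 16$ the term $5d^{13/3}q$ is already comparable to $2^{20}q$ and leaves little room. By contrast, the purely asymptotic statement and the converse are essentially formal once the correspondence $\mathcal{C}\leftrightarrow\mathcal{C}'$ (preserving dimension, degree, and non-containment in the trivial loci) has been set up.
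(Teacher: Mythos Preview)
Your proposal is correct and follows essentially the same route as the paper: transport the $\phi$-fixed absolutely irreducible component $\mathcal{C}\subseteq\mathcal{W}$ to an $\mathbb{F}_q$-rational absolutely irreducible component $\mathcal{C}'\subseteq\mathcal{V}$, apply the Cafure--Matera bound (Theorem~\ref{th:cafmat}) to produce $\sim q^r$ rational points, and subtract the $O(q^{r-1})$ points lying on the six forbidden loci. The only cosmetic difference is the degree bound in the $r=2$ case---you use the raw B\'ezout bound $d\le 16$ whereas the paper shaves this to $d\le 14$ by peeling off two trivial components of $\mathcal{W}$; both bounds clear the numerical check at $q=2^{20}$, yours (as you note) with only a thin margin.
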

\begin{proof}
We reformulate the APN condition in geometric terms and apply the Lang-Weil theorem to count rational points on the associated variety.
Recall that $f_{A,B,C,D,E}$ is APN if and only if for all nonzero $a \in \mathbb{F}_{q^2}$, the equation
$$f_{A,B,C,D,E}(x+a)+f_{A,B,C,D,E}(x)=f_{A,B,C,D,E}(y+a)+f_{A,B,C,D,E}(y)$$
has only the trivial solutions $x=y$ or $x=y+a$.

Via the change of variables $(x,y) \mapsto (x+y,y)$, this is equivalent to requiring that for all nonzero $a \in \mathbb{F}_{q^2}$, the equation
\begin{equation}\label{eq:apn_condition}
(Aa+a^{2q}E+a^qD)x^2 + (a^2A + a^{2q}C + a^qB)x + (a^2E + aC + a^q)x^{2q} + (a^2D + aB + a^{2q})x^q=0
\end{equation}
has only the solutions $a=0$, $x=0$, or $x=a$.

Write $a=Z_0+iZ_1$ and $x=X_0+iX_1$ where $\{1,i\}$ is an $\mathbb{F}_q$-basis of $\mathbb{F}_{q^2}$ with $i^q = i + c$ for some $c \in \mathbb{F}_q$ (depending on the choice of basis). The APN condition translates to: for all $(Z_0,Z_1) \in \mathbb{F}_q^2 \setminus \{(0,0)\}$, Equation~\eqref{eq:apn_condition} has only solutions $(X_0,X_1) \in \{(0,0), (Z_0,Z_1)\}$.

Consider the system obtained by taking Equation~\eqref{eq:apn_condition} together with its $q$-th power (Frobenius conjugate):
$$\begin{cases}
(Aa+a^{2q}E+a^qD)x^2 + (a^2A + a^{2q}C + a^qB)x\\
\hspace{1 cm}+ (a^2E + aC + a^q)x^{2q} + (a^2D + aB + a^{2q})x^q=0\\
(A^qa^q+a^{2}E^q+aD^q)x^{2q} + (a^{2q}A^q + a^{2}C^q + aB^q)x^q\\
\hspace{1 cm} + (a^{2q}E^q + a^qC^q + a)x^{2} + (a^{2q}D^q + a^qB^q + a^{2})x=0.
\end{cases}$$

Expressing $a$ and $x$ in terms of the basis, this system defines an $\mathbb{F}_q$-rational variety $\mathcal{V}$ in $\mathbb{A}^4(\mathbb{F}_{q^2})$.

Under the change of variables $\psi$ defined by 
$$(X_0+iX_1,X_0+i^qX_1,Z_0+iZ_1,Z_0+i^qZ_1)\mapsto(X_0,X_1,Z_0,Z_1),$$
the variety $\mathcal{V}$ is mapped to the variety $\mathcal{W}$ defined by:
$$\begin{cases}
F_1(X_0,X_1,Z_0,Z_1):=&(AZ_0+Z_1^2 E+Z_1 D)X_0^2 + (Z_0^2A + Z_1^2C + Z_1B)X_0\\
&+ (Z_0^2E + Z_0C + Z_1)X_1^2 + (Z_0^2D + Z_0B + Z_1^2)X_1=0\\
F_2(X_0,X_1,Z_0,Z_1):=&(A^qZ_1+Z_0^{2}E^q+Z_0D^q)X_1^2 + (Z_1^2A^q + Z_0^{2}C^q + Z_0B^q)X_1\\ 
&+ (Z_1^2E^q + Z_1C^q + Z_0)X_0^2 + (Z_1^2D^q + Z_1 B^q + Z_0^{2})X_0=0.
\end{cases}$$

The change of variables $\psi$ is an $\mathbb{F}_{q^2}$-isomorphism, and there is a bijection between absolutely irreducible components of $\mathcal{V}$ and those of $\mathcal{W}$.
 
 The APN condition now reads as follows: $f_{A,B,C,D,E}(x)$ is APN if and only if every $\mathbb{F}_q$-rational point of $\mathcal{W}$ lies in the union of the hyperplanes $\pi_i$, $i=1,\ldots,6$.

Moreover, an absolutely irreducible component of $\mathcal{V}$ is $\mathbb{F}_q$-rational (fixed by the $q$-th power Frobenius) if and only if the corresponding component of $\mathcal{W}$ is fixed by the morphism
$$\phi(A,B,C,D,E,X_0,X_1,Z_0,Z_1)=(A^q,B^q,C^q,D^q,E^q,X_1,X_0,Z_1,Z_0).$$

Now suppose that $\mathcal{W}$ contains an absolutely irreducible variety $\mathcal{C}$ that is fixed by $\phi$. We will show that this implies $f_{A,B,C,D,E}$ is not APN for sufficiently large $q$.
The variety $\mathcal{C}$, being absolutely irreducible and contained in $\mathbb{A}^4(\mathbb{F}_{q^2})$, has dimension $r$ where $0 \leq r \leq 4$. 

If $r=0$ then $\mathcal{C}$ consists of one point and by our assumption, it is not contained in $\bigcup_{i}\pi_i$ and thus $f_{A,B,C,D,E}$ is not APN. 

Suppose that $r>0$. Then $\mathcal{C}$ consists of 
$$  q^r + O(q^{r-1/2})$$
points fixed by $\phi$, by the Lang-Weil bound. Since the intersection between $\mathcal{C}$ and each $\pi_i$, $i=1,\ldots,6$, is a variety of dimension $r-1$ and degree at most $d$, it contains at most 
$$d(q^{r-1} + O(q^{r-3/2})) $$
points fixed by $\phi$. 

Thus we conclude that $f_{A,B,C,D,E}$ is not APN if 
\begin{equation}\label{Estimate}
q^r + O(q^{r-1/2})-6d(q^{r-1} + O(q^{r-3/2}))
\end{equation}
is positive. 

In particular if $r=2$ then $d\leq 14$ (in fact $\mathcal{W}$ is the  complete intersection of two quartics in $\mathbb{A}^4$ and two components are $\pi_1$ and $\pi_2$). In order to estimate the error terms in Equation \eqref{Estimate}, we can make use of Theorem \ref{th:cafmat} and  we conclude that if $q\geq 2^{20}$ then the quantity in Equation \eqref{Estimate} is positive and $f_{A,B,C,D,E}$ is not APN.
\end{proof}

\begin{rem}
\label{rem:bound_discussion}
The lower bound $q \geq 2^{20}$ established in Theorem \ref{Th:Key} is a sufficient condition derived from the worst-case error terms in the Lang-Weil bound (using for instance the Cafure-Matera extimates). This ensures that the number of rational points on the variety $\mathcal{C}$ strictly exceeds the number of points contained in the forbidden hyperplanes. However, from a geometric perspective, the existence of an absolutely irreducible component $\mathcal{C} \not\subseteq \bigcup \pi_i$ is a structural obstruction to the APN property that typically manifests for much smaller $q$. In the vast majority of cases, such a variety will possess $\mathbb{F}_q$-rational points outside the forbidden locus long before the asymptotic lower bound is reached. This is strongly supported by our computational classification in Section \ref{sec:computational_short}, where we observe that for $q \in \{2, 4, 8, 16\}$, the function $f_{A,B,C,D,E}$ fails to be APN  when the geometric conditions for the existence of such a component are met.
\end{rem}

Our next aim is to provide conditions on the coefficients $A,B,C,D,E\in \mathbb{F}_{q^2}$ for which Theorem~\ref{Th:Key} applies. 
First, note that the coefficient of $X_1^2$ in the first equation is non-vanishing (as polynomial in the remaining variables). 
We continue our investigation by simplifying the two equations $$F_1(X_0,X_1,Z_0,Z_1)=0 \textrm{ and } F_2(X_0,X_1,Z_0,Z_1)=0$$ defining $\mathcal{W}$.
Let 
\allowdisplaybreaks
\begin{align*}
    G(X_0,X_1,Z_0,Z_1)&:=(Z_0^2 E^q + Z_0 D^q + Z_1 A^q)F_1(X_0,X_1,Z_0,Z_1)\\
    &\qquad+(Z_0^2 E + Z_0 C + Z_1)F_2(X_0,X_1,Z_0,Z_1)\\
    &=(Z_0^3 A E^q + Z_0^3 E + Z_0^2 Z_1 C^{q} E + Z_0^2 Z_1 D E^q + Z_0^2 A D^{q} + Z_0^2 C + Z_0 Z_1^2 C E^q\\
        &\qquad+ Z_0 Z_1^2 D^{q} E + Z_0 Z_1 A^{q+1} + Z_0 Z_1 C^{q+1} + Z_0Z_1 D^{q+1} + Z_0Z_1 + Z_1^3 A^{q} E\\
        &\qquad+ Z_1^3 E^q + Z_1^2 A^{q} D + Z_1^2 C^{q})X_0^2\\
        &\qquad+ (Z_0^4 A E^q + Z_0^4 E + Z_0^3 A D^{q} + Z_0^3 C + Z_0^2 Z_1^2 C E^q + Z_0^2 Z_1^2 D^{q} E\\
        &\qquad+ Z_0^2 Z_1 A^{q+1} + Z_0^2 Z_1 B E^q + Z_0^2 Z_1 B^{q} E + Z_0^2 Z_1 + Z_0 Z_1 B D^{q}\\
        &\qquad+ Z_0 Z_1 B^{q} C + Z_1^3 A^{q} C + Z_1^3 D^{q} + Z_1^2 A^{q} B + Z_1^2 B^{q})X_0\\
        &\qquad +(Z_0^4 C^{q} E + Z_0^4 D E^q + Z_0^3 B E^q + Z_0^3 B^{q} E + Z_0^3  C^{q+1} + Z_0^3D^{q+1}\\
        &\qquad+Z_0^2 Z_1^2 A^{q} E + Z_0^2 Z_1^2 E^q + Z_0^2 Z_1 A^{q} D + Z_0^2 Z_1 C^{q} + Z_0^2 B D^{q}\\
        &\qquad+Z_0^2 B^{q} C + Z_0 Z_1^2 A^{q} C + Z_0 Z_1^2 D^{q} + Z_0 Z_1 A^{q} B + Z_0 Z_1 B^{q})X_1.
\end{align*}

\begin{prop}\label{Prop:CoeffX_1}
    If the coefficient of $X_1$ in $G(X_0,X_1,Z_0,Z_1)$ vanishes, then one of the following holds:
\begin{enumerate}
    \item[$(C1)$] $A\neq 0$, $C=D=0$, $A^qB=B^q$, $A^qE=E^q$; or 
    \item[$(C2)$] $ACD\neq 0$, $A^{q+1}=1$, $D=AC^q$, $B^q=A^qB$, $E^q=A^qE$.
\end{enumerate}
\end{prop}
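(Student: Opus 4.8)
The plan is to regard the coefficient of $X_1$ in $G$ as a polynomial in $Z_0,Z_1$ whose coefficients lie in $\mathbb{F}_{q^2}[A,B,C,D,E]$, and to translate the hypothesis into the vanishing of each of these coefficients. Reading off the monomials $Z_0^4, Z_0^3, Z_0^2Z_1^2, Z_0^2Z_1, Z_0^2, Z_0Z_1^2, Z_0Z_1$ from the displayed formula and using that we are in characteristic $2$, the monomials $Z_0^2Z_1^2$ and $Z_0Z_1$ give at once
$$A^qE=E^q \quad\text{and}\quad A^qB=B^q,$$
which are precisely the conditions common to $(C1)$ and $(C2)$. The monomials $Z_0^2Z_1$ and $Z_0Z_1^2$ give the symmetric pair $A^qD=C^q$ and $A^qC=D^q$, which will drive the case split; the remaining monomials $Z_0^4,Z_0^3,Z_0^2$ yield three further relations that I expect to be redundant.

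The key step is to exploit $A,C,D\in\mathbb{F}_{q^2}$, so that $t^{q^2}=t$ for each of these elements. Raising $A^qC=D^q$ to the $q$-th power gives $D=AC^q$, and substituting this into $A^qD=C^q$ yields $A^{q+1}C^q=C^q$, i.e. $C^q(A^{q+1}+1)=0$. Hence either $C=0$ or $A^{q+1}=1$, and this dichotomy organizes the remainder of the argument.

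If $C=0$ then $D=AC^q=0$, so $C=D=0$; assuming the tuple $(A,B,C,D,E)$ is not identically zero (otherwise $A^qE=E^q$ and $A^qB=B^q$ force $B=E=0$ as well, the degenerate case $x^{3q}$), this gives $A\neq0$ and hence exactly $(C1)$. If instead $C\neq0$ then $A^{q+1}=1$, so $A\neq0$ and $D=AC^q\neq0$, whence $ACD\neq0$; combined with $D=AC^q$, $A^qB=B^q$, $A^qE=E^q$ this is exactly $(C2)$. To close, I would verify that the three relations from $Z_0^4,Z_0^3,Z_0^2$ are automatically satisfied in each case (for instance $BE^q=A^qBE=B^qE$ using the common conditions, and $D^{q+1}=C^{q+1}$ when $A^{q+1}=1$), confirming that they add no further obstruction, consistent with the statement listing only the conditions above.

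The \textbf{main obstacle} is not conceptual but bookkeeping: correctly extracting the seven monomial coefficients from the lengthy expression for $G$, and carrying out the Frobenius manipulation of the pair $A^qD=C^q$, $A^qC=D^q$ while being careful that one uses membership in $\mathbb{F}_{q^2}$ (so $t^{q^2}=t$) rather than in $\mathbb{F}_q$. A secondary point is the non-degeneracy hypothesis needed to guarantee $A\neq0$ in case $(C1)$, which should be recorded explicitly, since the all-zero tuple otherwise satisfies the hypothesis but neither conclusion.
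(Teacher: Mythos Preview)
Your proposal is correct and follows essentially the same approach as the paper: both extract the seven monomial coefficients, use the pair $A^qD=C^q$, $A^qC=D^q$ together with $t^{q^2}=t$ to drive the dichotomy, and verify that the remaining three relations (your $Z_0^4,Z_0^3,Z_0^2$, the paper's (i),(ii),(v)) are redundant. The only cosmetic difference is that you obtain the split via $C^q(A^{q+1}+1)=0$ after substituting $D=AC^q$, whereas the paper first argues from (iv),(vi) that either $C=D=0$ or $CD\neq0$ and then derives $A^{q+1}=1$ in the latter case; your explicit flag on the $A\neq0$ non-degeneracy matches the paper's ``$A=0$ forces $B=C=D=E=0$ (trivial case)''.
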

 \begin{proof}
    The coefficient of $X_1$ in $G(X_0,X_1,Z_0,Z_1)$ vanishes if and only if the following system holds:
    \begin{align}
    C^{q} E + D E^q &= 0 \tag{i}\\
    B E^q + B^{q} E + C^{q+1} + D^{q+1} &= 0 \tag{ii}\\
    A^{q} E + E^q &= 0 \tag{iii}\\
    A^{q} D + C^{q} &= 0 \tag{iv}\\
    B D^{q} + B^{q} C &= 0 \tag{v}\\
    A^{q} C + D^{q} &= 0 \tag{vi}\\
    A^{q} B + B^{q} &= 0. \tag{vii}
    \end{align}

Note that $A=0$ forces $B=C=D=E=0$ (trivial case), so assume $A\neq 0$.
From (iv) and (vi), either $C=D=0$ or both $C,D\neq 0$.

\textbf{Case 1: $C=D=0$ and $A\neq 0$.}

Equations (i), (iv), (v), (vi) are automatically satisfied. Equations (iii) and (vii) give
$$A^qE = E^q, \quad A^qB = B^q.$$
Equation (ii) becomes $BE^q + B^qE = 0$. 
From $A^qE=E^q$ in $\mathbb{F}_{q^2}$, applying the $q$-power Frobenius (using $E^{q^2}=E$ and $A^{q^2}=A$),
$$A^{q^2}E^q = E^{q^2} \implies AE^q = E.$$
Similarly, from $A^qB=B^q$, we get $AB^q=B$.

Now we verify Equation (ii). We have $B^qE = (A^qB)(AE^q)$ since $B^q = A^qB$ and $E = AE^q$. Therefore,
$$BE^q + B^qE = BE^q + (A^qB)(AE^q) = B(A^qE) + (A^qB)(AE^q).$$
Since $E^q = A^qE$ and $E = AE^q$, we have
$$B(A^qE) + (A^qB)(AE^q) = BE^q + (A^qB)E = B(A^qE) + A^q(BE) = A^q(BE) + A^q(BE) = 0$$
in characteristic 2.
This gives Condition $(C1)$.

\textbf{Case 2: $CD\neq 0$ and $A\neq 0$.}

From (iv): $D = C^q/A^q$. From (vi): $C = D^q/A^q = (C^q/A^q)^q/A^q = C^{q^2}/A^{q(q+1)}$. 
Since $C^{q^2} = C$ in $\mathbb{F}_{q^2}$, we have $C = C/A^{q(q+1)}$, which gives (since $C \neq 0$),
$$A^{q(q+1)} = 1.$$
Taking the $q$-th power: $A^{q^2(q+1)} = 1$, hence $A^{q+1} = 1$ (using $A^{q^2}=A$).
With $A^{q+1}=1$, from (iv), $D = C^q/A^q$. Since $A^{q+1}=1$, we have $A^q = A^{-1}$, so
$$D = C^q /A^{-1} =  AC^q.$$
From (iii) and (vii), we get $A^qE=E^q$ and $A^qB=B^q$.

Verification of remaining equations confirms consistency with these values.
This gives condition $(C2)$.
\end{proof}

\begin{prop}\label{Prop:Condition_(C1)}
    Suppose that Condition $(C1)$ holds. If $q$ is large enough, then $f_{A,B,C,D,E}(x)$ is APN if and only if $A^{q+1}+1\neq 0$.
\end{prop}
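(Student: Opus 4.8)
The plan is to specialize the two generators of the ideal of $\mathcal{W}$ to Condition $(C1)$ and then extract a single algebraic identity that governs the entire dichotomy. Setting $C=D=0$ and using $B^q=A^qB$, $E^q=A^qE$, I would first rewrite $F_1$ and $F_2$ in the compact form
$$F_1 = A\sigma+\tau+R,\qquad F_2 = \sigma+A^q\tau+A^qR,$$
where $\sigma = Z_0X_0(X_0+Z_0)$, $\tau=Z_1X_1(X_1+Z_1)$, $P=Z_1X_0+Z_0X_1$ and $R=EP^2+BP$; here one checks that $P$ is $\phi$-invariant and that $\phi(F_1)=F_2$. The key point is then the identity
$$A^qF_1+F_2=(A^{q+1}+1)\,\sigma,$$
obtained by straightforward cancellation in characteristic $2$. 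This reduces the whole proposition to the behaviour of the scalar factor $A^{q+1}+1$.

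For the APN direction, assume $A^{q+1}+1\neq0$. Since $F_1$ and $F_2$ both vanish on $\mathcal{W}$, the identity forces $\sigma=Z_0X_0(X_0+Z_0)=0$ at every point of $\mathcal{W}$, so $\mathcal{W}$ is set-theoretically contained in $\{X_0=0\}\cup\{Z_0=X_0\}\cup\{Z_0=0\}=\pi_1\cup\pi_3\cup\pi_5$, hence in $\bigcup_{i=1}^6\pi_i$. The converse part of Theorem~\ref{Th:Key} then gives immediately that $f_{A,B,C,D,E}$ is APN; notably this half requires no hypothesis on the size of $q$.

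For the non-APN direction, assume $A^{q+1}=1$, so that $A=\mu^{q-1}$ for some $\mu\in\F_{q^2}^{\ast}$. I would show that $f$ collapses onto an $\F_q$-line. Putting $b=\mu B$ and $e=\mu E$, the relations $B^q=A^qB$, $E^q=A^qE$ yield $b,e\in\F_q$, while $\mu^qx^3+\mu x^{3q}=\Tr_{\F_{q^2}/\F_q}(\mu^qx^3)$ and $x^{q+1},x^{2q+2}\in\F_q$. Collecting terms gives $\mu f(x)=\Tr_{\F_{q^2}/\F_q}(\mu^qx^3)+b\,x^{q+1}+e\,x^{2q+2}=:g(x)\in\F_q$ for every $x$. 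Scaling the output by the nonzero constant $\mu$ preserves differential uniformity, so it suffices to see that $g\colon\F_{q^2}\to\F_q$ is not APN: for any fixed $a\neq0$ the derivative $D_ag$ sends the $q^2$ elements of $\F_{q^2}$ into the $q$-element set $\F_q$, so some value is hit at least $q\geq4>2$ times. Hence $f$ is not APN whenever $q\geq4$.

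The real obstacle here is recognizing the correct normal form: the raw expansion of $F_1,F_2$ is opaque, and the argument only becomes transparent once $\sigma,\tau,P$ are introduced so that $A^qF_1+F_2$ telescopes to $(A^{q+1}+1)\sigma$. I expect the non-APN case to be the more delicate one if one insists on staying inside the variety framework: when $A^{q+1}=1$ the relation $F_2=A^qF_1$ makes the second equation redundant, so $\mathcal{W}=\{F_1=0\}$ degenerates to a single three-dimensional hypersurface, and one would then have to prove it carries an absolutely irreducible, $\phi$-invariant component off the six hyperplanes before invoking Lang--Weil. The subfield degeneration $f=\mu^{-1}g$ sidesteps this absolute-irreducibility analysis entirely and, as a bonus, sharpens ``$q$ large enough'' to the explicit threshold $q\geq4$.
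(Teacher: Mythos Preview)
Your argument is correct, and the non-APN half takes a genuinely different route from the paper.

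For $A^{q+1}+1\neq0$ both proofs end at the same place, namely $\mathcal{W}\subset\bigcup_i\pi_i$ and the converse of Theorem~\ref{Th:Key}. The paper gets there by first observing that $A^qB=B^q$ with $A^{q+1}\neq1$ forces $B=0$ (and likewise $E=0$), and then simplifying $G$ and $F_2$ explicitly. Your identity $A^qF_1+F_2=(A^{q+1}+1)\sigma$ is cleaner and does not need the reduction $B=E=0$ (which is forced anyway), but the underlying mechanism is the same.

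For $A^{q+1}=1$ the two arguments are structurally different. The paper stays inside the variety framework: since $F_2=A^qF_1$, the surface $\mathcal{W}$ degenerates to a single hypersurface $H=0$, and the bulk of the work is a case-by-case proof that $H$ is absolutely irreducible (splitting on $(B,E)\neq(0,0)$ versus $B=E=0$), after which Theorem~\ref{Th:Key} and the Lang--Weil bound give non-APN for $q$ large. Your subfield argument bypasses all of this: writing $A=\mu^{q-1}$ and checking $\mu f(x)\in\F_q$ for every $x$ is a direct pigeonhole obstruction to APN, valid for every $q\geq4$ with no appeal to irreducibility or point-counting. This buys you an explicit and much smaller threshold on $q$ and a shorter proof; what the paper's approach buys is consistency with the machinery used throughout the rest of the classification, where such a global subfield degeneration is typically not available.
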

\begin{proof}
    Consider first $A^{q+1}+1\neq 0$. Then Condition $(C1)$ yields also $B=E=0$. Then 
    \begin{eqnarray*}
        G(X_0,X_1,Z_0,Z_1)&:=& (A^{q+1}+1)Z_0Z_1X_0(X_0+Z_0),\\
        F_2(X_0,X_1,Z_0,Z_1)&:=& A^q X_1^2 Z_1 + A^q X_1 Z_1^2 + X_0^2 Z_0 + X_0 Z_0^2,
    \end{eqnarray*}
and the components of $G(X_0,X_1,Z_0,Z_1)=F_2(X_0,X_1,Z_0,Z_1)=0$ are contained in the union of the hyperplanes $X_0=0$, $X_1=0$, $Z_0=X_0$, $Z_1=X_1$, $Z_0=0$, $Z_1=0$, and by Theorem \ref{Th:Key}   $f_{A,B,C,D,E}(x)$ is APN. 
    
    Suppose that $A^{q+1}+1=0$. In this case $\mathcal{W}$ collapses to a unique equation (fixed by $\phi$)
    $$H(X_0,X_1,Z_0,Z_1):= (A Z_0 + E Z_1^2)X_0^2 +(A Z_0^2 + B Z_1)X_0+X_1(B Z_0 + E X_1 Z_0^2 + X_1 Z_1 + Z_1^2)=0.$$
    \begin{itemize}
        \item $(B,E)\neq (0,0)$. Note that $B Z_0 + E X_1 Z_0^2 + X_1 Z_1 + Z_1^2$ and $A Z_0 + E Z_1^2$ are both  irreducible and of degree at most three and at least one. A putative factorization of  $H(X_0,X_1,Z_0,Z_1)$   is 
        $$((A Z_0 + E Z_1^2)X_0+L_1(X_1,Z_0,Z_1))(X_0+L_2(X_1,Z_0,Z_1)),$$
        where $L_2(X_1,Z_0,Z_1)$ is a divisor of $X_1(B Z_0 + E X_1 Z_0^2 + X_1 Z_1 + Z_1^2)$. This implies that 
        $$H(L_2,X_1,Z_0,Z_1) \equiv 0.$$
        If $\deg(L_2)>1$ this provides a clear contradiction, since $\deg((A Z_0 + E Z_1^2)L_2^2)\geq 1+2\deg(L_2)$ and $H(L_2,X_1,Z_0,Z_1) \not\equiv 0$. On the other hand if $\deg(L_2)\leq 1$ then  $L_2=\lambda X_1$ or $L_2=\lambda $ with $\lambda \in \overline{\mathbb{F}_{q}}$. In this case, by easy computations $H(L_2,X_1,Z_0,Z_1)$  does not vanish. This shows that $H(X_0,X_1,Z_0,Z_1)$ is absolutely irreducible. 
        
        \item $B=E= 0$. In this case 
        $$H(X_0,X_1,Z_0,Z_1):= A Z_0X_0^2 +A Z_0^2X_0+X_1^2 Z_1 + X_1Z_1^2.$$
        Since $H(X_0,X_1,Z_0,1)=X_1^2  + X_1 +AX_0Z_0( X_0 +Z_0)$ has constant term (in $X_1$) of degree three in $X_0$ and $Z_0$,  $H(X_0,X_1,Z_0,1)$ and thus $H(X_0,X_1,Z_0,Z_1)$ is absolutely irreducible. 
    \end{itemize}
This shows that when $A^{q+1}+1=0$, $\mathcal{W}$ is fixed by  $\phi$, absolutely irreducible and clearly not contained in the forbidden hyperplanes. Thus $f_{A,B,C,D,E}(x)$ is not APN.   
\end{proof}

\begin{prop}\label{Prop:Condition_(C2)}
    Suppose that Condition $(C2)$ holds. If $q$ is large enough, then $f_{A,B,C,D,E}(x)$ is not APN.
\end{prop}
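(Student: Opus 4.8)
The plan is to exhibit, inside $\mathcal{W}$, an absolutely irreducible surface that is fixed by $\phi$ and avoids the forbidden hyperplanes, and then to invoke Theorem~\ref{Th:Key} with $q\ge 2^{20}$; the surface will be produced as the graph of an explicit rational map of the $(Z_0,Z_1)$-plane, so that its absolute irreducibility comes for free.

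First I would note that Condition $(C2)$ is exactly the hypothesis of Proposition~\ref{Prop:CoeffX_1}, so the coefficient of $X_1$ in $G$ vanishes. Since $G$ was built as $G=\alpha F_1+\beta F_2$ with $\alpha=Z_0^2E^q+Z_0D^q+Z_1A^q$ (the $X_1^2$-coefficient of $F_2$) and $\beta=Z_0^2E+Z_0C+Z_1$ (the $X_1^2$-coefficient of $F_1$), it carries no $X_1^2$ term. Hence under $(C2)$ it collapses to
$$G(X_0,X_1,Z_0,Z_1)=X_0\big(P_0(Z_0,Z_1)\,X_0+P_1(Z_0,Z_1)\big),$$
where $P_0,P_1\in\overline{\mathbb{F}_q}[Z_0,Z_1]$ are the coefficients of $X_0^2$ and $X_0$ read off from the displayed expansion of $G$ after substituting $A^{q+1}=1$ and $D=AC^q$. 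As $G$ vanishes on $\mathcal{W}$, every component of $\mathcal{W}$ lies in $\pi_1=\{X_0=0\}$ or in $V(P_0X_0+P_1)$; applying $\phi$ (which sends $G$ to $X_1(\phi(P_0)X_1+\phi(P_1))$ and fixes $\mathcal{W}$) yields the symmetric statement with $\pi_2=\{X_1=0\}$. So any component of $\mathcal{W}$ avoiding $\pi_1\cup\pi_2$ lies on
$$\mathcal{G}:\qquad P_0X_0+P_1=0,\qquad \phi(P_0)X_1+\phi(P_1)=0.$$

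Next I would analyze $\mathcal{G}$ directly. On the dense open set $P_0\phi(P_0)\neq0$ it is the graph $X_0=P_1/P_0$, $X_1=\phi(P_1)/\phi(P_0)$ of a rational map of the $(Z_0,Z_1)$-plane; hence $\mathcal{G}$ is birational to $\mathbb{A}^2$, and therefore absolutely irreducible of dimension $2$. Since $\phi$ interchanges the two defining equations, $\phi(\mathcal{G})=\mathcal{G}$. It remains to show $\mathcal{G}\subseteq\mathcal{W}$ and that $\mathcal{G}$ meets none of the $\pi_i$ generically. For the inclusion I would observe that both $G$ and $\phi(G)$ vanish on $\mathcal{G}$; writing $\phi(G)=\beta' F_1+\alpha' F_2$ with $\alpha'=\phi(\alpha)$, $\beta'=\phi(\beta)$ (using $\phi(F_1)=F_2$), this gives the linear system
$$\begin{pmatrix}\alpha&\beta\\ \beta'&\alpha'\end{pmatrix}\begin{pmatrix}F_1\\ F_2\end{pmatrix}=\begin{pmatrix}0\\ 0\end{pmatrix}\qquad\text{on }\mathcal{G}.$$
Wherever the determinant $\Delta=\alpha\alpha'+\beta\beta'\in\overline{\mathbb{F}_q}[Z_0,Z_1]$ is nonzero one gets $F_1=F_2=0$; since $\mathcal{G}$ is irreducible and $\Delta$ does not vanish identically on it, taking closures yields $\mathcal{G}\subseteq\mathcal{W}$. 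Non-containment in the forbidden hyperplanes is checked at the generic point: $Z_0,Z_1$ are free, and one verifies $P_1,\,\phi(P_1)\not\equiv0$ (so $X_0,X_1\not\equiv0$) together with $P_1-Z_0P_0\not\equiv0$ and $\phi(P_1)-Z_1\phi(P_0)\not\equiv0$ (so $X_0\neq Z_0$, $X_1\neq Z_1$). Granting these, Theorem~\ref{Th:Key} applied to $\mathcal{C}=\mathcal{G}$ shows $f_{A,B,C,D,E}$ is not APN for $q\ge 2^{20}$.

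The main obstacle is the cluster of nonvanishing verifications, and chiefly the determinant identity $\Delta=\alpha\alpha'+\beta\beta'\not\equiv0$, which is precisely what forces $\mathcal{G}$ into $\mathcal{W}$. I expect it to hold under $ACD\neq0$, $A^{q+1}=1$, but if $\Delta$ were to vanish identically one would instead substitute $X_0=P_1/P_0$ into $F_2$ and check $F_2|_{\mathcal{G}}\equiv0$ by hand. I would carry out all of these polynomial computations after replacing $D=AC^q$ and $A^{q}=A^{-1}$ and keeping track of the $(C2)$-relations $B^q=A^qB$, $E^q=A^qE$, which should keep the expressions in $Z_0,Z_1$ manageable.
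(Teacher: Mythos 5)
Your construction collapses at its foundation: under Condition $(C2)$ the polynomial $G$ does not reduce to $X_0\bigl(P_0X_0+P_1\bigr)$ with nonzero $P_0,P_1$ --- it vanishes \emph{identically}. Indeed, $(C2)$ gives $E^q=A^qE$, $D^q=A^qC$, $B^q=A^qB$ and $A^{q+1}=1$, from which one checks directly that $\alpha:=Z_0^2E^q+Z_0D^q+Z_1A^q=A^q\beta$ where $\beta:=Z_0^2E+Z_0C+Z_1$, and moreover that $F_2=A^qF_1$ (compare the four coefficients term by term; e.g.\ the $X_0^2$-coefficient of $F_2$ is $Z_1^2A^qE+Z_1A^qD+Z_0=A^q(AZ_0+EZ_1^2+DZ_1)$ using $A^{q+1}=1$). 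Hence in characteristic $2$,
$$G=\alpha F_1+\beta F_2=A^q\beta F_1+\beta\cdot A^qF_1\equiv 0,$$
so both of your $P_0$ and $P_1$ are the zero polynomial (one can also verify this monomial by monomial in the displayed expansion of $G$: every coefficient, such as $AE^q+E$, $AD^q+C$, $A^{q+1}+C^{q+1}+D^{q+1}+1$, $A^qB+B^q$, dies under the $(C2)$ relations). Consequently the ``graph'' $\mathcal{G}:\,P_0X_0+P_1=0,\ \phi(P_0)X_1+\phi(P_1)=0$ is not a surface at all --- both equations read $0=0$ --- and there is nothing to feed into Theorem~\ref{Th:Key}. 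Your fallback is equally unavailable: the determinant $\Delta=\alpha\phi(\alpha)+\beta\phi(\beta)=(A^{q+1}+1)\beta\phi(\beta)$ vanishes identically under $(C2)$, and the substitution $X_0=P_1/P_0$ you propose in that case is meaningless since $P_0=P_1=0$. This is not a repairable edge case of your method: the entire content of Condition $(C2)$ is precisely that the elimination polynomial $G$ degenerates completely, which is why the paper must dispose of $(C1)$ and $(C2)$ separately before the $G$-based machinery is ever used.

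The identity $F_2=A^qF_1$ that kills your approach is exactly what the paper's proof exploits: under $(C2)$ the variety $\mathcal{W}$ collapses to the single hypersurface $H:=F_1=0$ in $\mathbb{A}^4$ (automatically fixed by $\phi$, since $\phi(H)=A^qH$). The actual work is then to show that $H$ is absolutely irreducible, which the paper does by ruling out factorizations of the form $\bigl((AC^qZ_1+AZ_0+EZ_1^2)X_0+L_1\bigr)(X_0+L_2)$ through degree and coefficient comparisons; in the exceptional subcase $B=E=0$ with $A=C^3$, $C^{q+1}=1$, the polynomial $H$ instead splits as $(CZ_0+Z_1)(CX_0+X_1)(CX_0+CZ_0+X_1+Z_1)$, each factor being $\phi$-fixed and not contained in the forbidden hyperplanes, so Theorem~\ref{Th:Key} applies in every case. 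If you want to salvage your write-up, you must abandon $G$ entirely and work with $H$ (or some other $\phi$-stable piece of $\mathcal{W}$) from the start.
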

\begin{proof}
    In this case $\mathcal{W}$ collapses to a unique equation (fixed by $\phi$)
    \begin{eqnarray*}
     H(X_0,X_1,Z_0,Z_1)&:=& (A C^q Z_1 + A Z_0 + E Z_1^2)X_0^2 +(A Z_0^2 + B Z_1 + C Z_1^2)X_0\\
     &&\qquad+X_1(A C^q Z_0^2 + B Z_0 + C X_1 Z_0 + E X_1 Z_0^2 + X_1 Z_1 + Z_1^2)=0.   
    \end{eqnarray*}
    Recall that $AC\neq 0$. 
     Note that $A C^q Z_0^2 + B Z_0 + C X_1 Z_0 + E X_1 Z_0^2 + X_1 Z_1 + Z_1^2$ and $A C^q Z_1 + A Z_0 + E Z_1^2$ are both  irreducible and of degree at most three and at least one. A putative factorization of  $H(X_0,X_1,Z_0,Z_1)$   is 
        $$((A C^q Z_1 + A Z_0 + E Z_1^2)X_0+L_1(X_1,Z_0,Z_1))(X_0+L_2(X_1,Z_0,Z_1)),$$
        where $L_2(X_1,Z_0,Z_1)$ is a divisor of $X_1(B Z_0 + E X_1 Z_0^2 + X_1 Z_1 + Z_1^2)$. This implies that 
        $$H(L_2,X_1,Z_0,Z_1) \equiv 0.$$
        If $\deg(L_2)>1$ this provides a clear contradiction, since $\deg((A Z_0 + E Z_1^2)L_2^2)\geq 1+2\deg(L_2)$ and $H(L_2,X_1,Z_0,Z_1) \not\equiv 0$. On the other hand if $\deg(L_2)\leq 1$ then  $L_2=\lambda X_1$ or $L_2=\lambda $ with $\lambda \in \overline{\mathbb{F}_{q}}$. 
        
        Now, $H(\lambda,X_1,Z_0,Z_1)\not \equiv 0$ since the coefficient of $X_1Z_1^2$ is $1$.

        Also, if $(B,E)\neq (0,0)$ then $H(\lambda X_1,X_1,Z_0,Z_1)\not \equiv 0$ since the coefficient of $Z_0^2X_1^2$ and  $X_1 Z_0$ are $E$ and $B$. 
        
        This shows that when $(B,E)\neq (0,0)$ $\mathcal{W}$ is fixed by  $\phi$, absolutely irreducible and clearly not contained in the forbidden hyperplanes. Thus $f_{A,B,C,D,E}(x)$ is not APN.

        Consider now the case $(B,E)= (0,0)$. If $H(\lambda X_1,X_1,Z_0,Z_1) \equiv 0$, then 
        $$\lambda =C^q, \quad \lambda^2 A= C, \qquad C\lambda=1, \qquad \lambda^2 A C^q=1.$$
        This yields $A=C^3$, $C^{q+1}=1$. In this case, after clearing the denominators
        $$H(X_0,X_1,Z_0,Z_1)=(C Z_0 + Z_1)(C X_0 + X_1)(C X_0 + C Z_0 + X_1 + Z_1).$$
        Each of these three factors is fixed by $\phi$ and defines a hypersurface not contained in the forbidden hyperplanes. Also in this case $f_{A,B,C,D,E}(x)$ is not APN.
\end{proof}

From now on, we suppose that neither Condition (C1) nor Condition (C2) holds. Thus  the coefficient of $X_1$ in $G(X_0,X_1,Z_0,Z_1)$ is non-vanishing and eliminating $X_1$ in $F_2(X_0,X_1,Z_0,Z_1)=0$ via $G(X_0,X_1,Z_0,Z_1)=0$ one gets 
\begin{equation}
\label{FactorizationS}
\overline{G}(X_0,Z_0,Z_1) :=(Z_0^2E^q + Z_0D^q + Z_1A^q)X_0(X_0 + Z_0)(a_2 X_0^2+a_1X_0+a_0)=0,
\end{equation}
where 
\allowdisplaybreaks
\begin{align*}
 a_2&:= (Z_0^3 A E^q + Z_0^3 E + Z_0^2 Z_1 C^q E + Z_0^2 Z_1 D E^q + Z_0^2 A D^q + Z_0^2 C + Z_0 Z_1^2 C E^q + Z_0 Z_1^2 D^q E\\ 
 &\quad+ Z_0 Z_1 A^{q+1} + Z_0 Z_1 C^{q+1} + Z_0 Z_1 D^{q+1} + Z_0 Z_1 + Z_1^3 A^q E + Z_1^3 E^q + Z_1^2 A^q D + Z_1^2 C^q  )^2\\
a_1 &:= a_2Z_0;\\
a_0 &:= (Z_0^3 C^q E + Z_0^3 D E^q + Z_0^2 B E^q + Z_0^2 B^q E + Z_0^2 C^{q+1} + Z_0^2 D^{q+1} + Z_0 Z_1^2 A^q E + Z_0 Z_1^2 E^q\\
&\quad + Z_0 Z_1 A^q D + Z_0 Z_1 C^q + Z_0 B D^q + Z_0 B^q C + Z_1^2 A^q C + Z_1^2 D^q + Z_1 A^q B + Z_1 B^q)\\
&\quad\cdot(Z_0^4 A C^q + Z_0^4 D + Z_0^3 A B^q + Z_0^3 B + Z_0^2 Z_1^2 A^{q+1} + Z_0^2 Z_1^2 C^{q+1} + Z_0^2 Z_1^2 D^{q+1} + Z_0^2 Z_1^2\\
&\quad + Z_0^2 Z_1 B C^q + Z_0^2 Z_1 B^q D + Z_0 Z_1^2 B D^q + Z_0 Z_1^2 B^q C + Z_1^4 A^q C + Z_1^4 D^q + Z_1^3 A^q B + 
        Z_1^3 B^q).
\end{align*}
Note that $Z_0^2E^q + Z_0D^q + Z_1A^q$ is a non-vanishing factor. 
Let us consider $a_0=g_1g_2$, $a_2=g_3^2$, as in the factorization above.
Let $\mathcal{Z}$ be defined by 
$$\begin{cases}
G(X_0,X_1,Z_0,Z_1)=0\\
a_2X_0^2+a_1X_0+a_0=0.
\end{cases}
$$

Clearly $\mathcal{Z}\subset \mathcal{W}$. It is possible to check that the surface $\mathcal{Z}$ is closed under the action of $\phi$. Also, if $\ell=\gcd(a_2,a_1,a_0)$, $a_2\neq 0$, then 
$$\begin{cases}
\widetilde{G}(X_0,X_1,Z_0,Z_1)=0\\
(a_2X_0^2+a_1X_0+a_0)/\ell=0
\end{cases}
$$ 
is also fixed by $\phi$.  We consider a variety $\widetilde{\mathcal{Z}}\subset \mathcal{Z}$ that is birationally equivalent to the surface $\mathcal{H}: a_2X_0^2+a_1X_0+a_0=0$. Since absolute irreducibility is preserved under birational equivalence, we may focus our analysis on $\mathcal{H}$ itself.

Thus, in order to prove the existence of a component in $\mathcal{W}$ absolutely irreducible and fixed by $\phi$, it is sufficient to prove that $a_2X_0^2+a_1X_0+a_0$ has degree $2$ in $X_0$ and the non-existence of factors in $a_2X_0^2+a_1X_0+a_0$ of degree $1$ in $X_0$.

Note that the polynomial $a_2X_0^2+a_1X_0+a_0$ contains a factor of degree $1$ in $X_0$ if and only if there exist $f,h \in \overline{\mathbb{F}_q}[Z_0,Z_1]$ such that 
     \begin{equation}\label{Equation1}
     g_3^2f^2+g_3^2Z_0fh+g_1g_2h^2=0.
     \end{equation}

As a notation,  for a polynomial $\ell \in \overline{\mathbb{F}_q}[Z_0,Z_1]$, we denote by $\ell^{(i)}$ and $\ell^{(L)}$ the homogeneous part of degree $i$ and the lowest (non-vanishing)  homogeneous part in $\ell$, respectively.

\begin{rem}\label{Remark}
    
    In what follows we will make use a number of times of the following observation. Let us consider  $a_2 X_0^2+a_1X_0+a_0$, where $a_0,a_1,a_2\in \mathbb{F}_q[Z_0,Z_1]$. If $a_2 X_0^2+a_1X_0+a_0$ is fixed by $X_0 \mapsto X_0+Z_0$ then putative degree-1 factors (in $X_0$) of $a_2 X_0^2+a_1X_0+a_0$ are of the type 
    $ \beta (X_0+Z_0)+\gamma$  and  $\beta X_0+\gamma$,
    for some $\beta,\gamma \in \overline{\mathbb{F}}_{q}[Z_0,Z_1]$ and thus if $a_2 X_0^2+a_1X_0+a_0$ contains factors of degree one in $X_0$ it must hold 
    $ a_2 X_0^2+a_1X_0+a_0=\alpha (\beta (X_0+Z_0)+\gamma)(\beta X_0+\gamma)$,
    for some $\alpha,\beta,\gamma \in \overline{\mathbb{F}}_{q}[Z_0,Z_1]$. Also, since $a_2=g_3^2$, 
$\alpha \gamma (\gamma+\beta Z_0)=a_0$, and $\alpha \beta^2=g_3^2$.
Without loss of generality, we may assume $\alpha=1$. Indeed, if $\alpha \ne 1$, then from $\alpha\beta^2 = g_3^2$, we see that $\alpha$ must be a perfect square, say $\alpha = \alpha_0^2$ for some $\alpha_0 \in \overline{\mathbb{F}}_q[Z_0,Z_1]$. We could then write $a_2X_0^2+a_1X_0+a_0 = [\alpha_0\beta X_0 + \alpha_0\gamma][\alpha_0\beta(X_0+Z_0) + \alpha_0\gamma]$, which allows us to replace $(\beta,\gamma)$ with $(\alpha_0\beta, \alpha_0\gamma)$ and reduce to the case where $\alpha=1$. 

\end{rem}

\section{Case \texorpdfstring{$B=0$}{B=0}}
We start our investigation with the case $B=0$.

\begin{thm}
\label{B0_1}
Suppose that conditions $(C1)$ and $(C2)$ do not hold.
Let $B=AC^q+D=0$ and let $q \geq 2^{20}$. Then:
\begin{enumerate}
\item If $(A^{q+1}+1)(C^{q+1}+1) \neq 0$, then $f_{A,B,C,D,E}(x)$ is not APN.

\item If $(A^{q+1}+1)=0$, $(C^{q+1}+1)\neq 0$, $AE^q+E \ne 0$, and $T^3+CT^2+AC^qT+A$ has a root $k \in \mathbb{F}_{q^2}$ with $k \ne 1$ and $k^{q+1}=1$, then $f_{A,B,C,D,E}(x)$ is not APN.

\item  If $(A^{q+1}+1)\neq 0$, $(C^{q+1}+1)=0$, 
then $f_{A,B,C,D,E}(x)$ is not APN. 

\item If $(A^{q+1}+1)=0$, $(C^{q+1}+1)=0$, and $AE^q+E \ne 0$, then $f_{A,B,C,D,E}(x)$ is  not APN. 
\end{enumerate}
\end{thm}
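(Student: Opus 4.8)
The plan is to feed the standing hypotheses $B=0$, $D=AC^q$ into the reduction preceding the theorem and then invoke Theorem~\ref{Th:Key}. The first move is the bookkeeping simplification of $g_1,g_2,g_3$. Setting $\mu:=E+AE^q$ and $\nu:=\mu^q=A^qE+E^q$, a direct computation collapses $g_2$ and $g_3$ to
\begin{gather*}
g_2=Z_0^2Z_1^2(A^{q+1}+1)(C^{q+1}+1),\\
g_3=\mu Z_0^2(Z_0+C^qZ_1)+\nu Z_1^2(CZ_0+Z_1)+(A^{q+1}+1)(CZ_0+Z_1)(Z_0+C^qZ_1).
\end{gather*}
Since $a_0=g_1g_2$, the shape of $g_2$ is exactly what forces the four-way split: the constant term $a_0$ of the conic in \eqref{FactorizationS} is nonzero precisely in case~(1), where $(A^{q+1}+1)(C^{q+1}+1)\neq0$, and it vanishes identically in cases~(2)--(4).

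In case~(1) I would argue as in Propositions~\ref{Prop:Condition_(C1)}--\ref{Prop:Condition_(C2)}. Here $g_3\neq0$, so $a_2=g_3^2\neq0$ and \eqref{FactorizationS} yields a genuine conic $\mathcal H:\,g_3^2X_0^2+g_3^2Z_0X_0+g_1g_2=0$ that is birationally equivalent to a component of $\mathcal{W}$. By Remark~\ref{Remark}, $\mathcal H$ fails absolute irreducibility only if \eqref{Equation1} has a solution, i.e.\ only if there is a polynomial $\gamma\in\overline{\mathbb{F}_q}[Z_0,Z_1]$ with $\gamma^2+g_3Z_0\gamma+g_1g_2=0$. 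I would exclude this by inspecting the lowest homogeneous parts: the factor $Z_0^3Z_1^2(A^{q+1}+1)(C^{q+1}+1)$ sitting inside $g_1g_2$ pins down its valuation in $Z_0,Z_1$, and a degree/parity comparison with $g_3^2$ shows no such $\gamma$ can exist. Absolute irreducibility of $\mathcal H$, its $\phi$-invariance and the fact that it is none of the $\pi_i$ then give non-APN-ness via Theorem~\ref{Th:Key}.

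Cases~(2)--(4) are the degenerate ones, where $a_0=0$ and the conic collapses to $g_3^2X_0(X_0+Z_0)$, so the bridge $\mathcal H$ is lost and the component must be produced by hand. The mechanism I would use is uniform: restrict $F_1$ and $F_2$ to the plane $Z_1=kZ_0$ and ask when they acquire a common factor $kX_0+X_1$. A short computation gives
\begin{gather*}
F_1\big|_{Z_1=kZ_0,\,X_1=kX_0}=(k^3+Ck^2+AC^qk+A)\,Z_0X_0(X_0+Z_0),\\
F_2\big|_{Z_1=kZ_0,\,X_1=kX_0}=(A^qk^3+A^qCk^2+C^qk+1)\,Z_0X_0(X_0+Z_0),
\end{gather*}
so $kX_0+X_1$ divides both restrictions exactly when $k$ is a common root of the two cubics, and the resulting plane $\mathcal C:\{Z_1=kZ_0,\ X_1=kX_0\}$ is then a $2$-dimensional, hence absolutely irreducible, component of $\mathcal{W}$. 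It is fixed by $\phi$ precisely when $k^{q+1}=1$ (as $\phi$ sends $Z_1+kZ_0$ to $Z_0+k^qZ_1$), and its generic point avoids all six $\pi_i$. It remains to produce such a $k$: when $A^{q+1}=1$ (case~(2)) the two cubics coincide with $T^3+CT^2+AC^qT+A$, so the hypothesised root $k$ with $k^{q+1}=1$ does the job; when $C^{q+1}=1$ (cases~(3) and~(4)) one checks directly that $k=C$ is a common root, and $C^{q+1}=1$ is exactly the invariance condition. Theorem~\ref{Th:Key} then applies.

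The hard part is this degenerate regime: since $a_0=0$ destroys the clean reduction to $\mathcal H$, one cannot appeal to Remark~\ref{Remark} and must instead verify directly that the linear locus $\{Z_1=kZ_0,X_1=kX_0\}$ genuinely lies in $\mathcal{W}$, is $\phi$-invariant, and escapes every forbidden hyperplane; the two restriction identities above are the crux, and the matching of the $F_1$- and $F_2$-cubics is what the case hypotheses are engineered to guarantee. The role of the side conditions then becomes transparent: $C^{q+1}=1$ supplies the canonical invariant root $k=C$ in cases~(3)--(4), $A^{q+1}=1$ together with the cubic's hypothesis supplies one in case~(2), and $AE^q+E\neq0$ (equivalently $\mu\neq0$) in cases~(2) and~(4) is precisely what keeps us off the Condition~$(C2)$ locus already treated in Proposition~\ref{Prop:Condition_(C2)} — a safeguard that is automatic in case~(3) because there $A^{q+1}+1\neq0$.
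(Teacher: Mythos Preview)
Your treatment of parts (2)--(4) is correct and in fact more economical than the paper's. The paper handles these three cases by three distinct constructions: for (2) it uses the same plane $\{Z_1=kZ_0,\ X_1=kX_0\}$ you found; for (3) it extracts a quadratic common factor $H$ of $G$ and $\overline{G}$ and analyses its irreducibility; for (4) it produces the linear factor $\sqrt{A}\,Z_0+\sqrt{C}\,Z_1$. Your observation that the two restriction cubics share the root $k=C$ whenever $C^{q+1}=1$ unifies (3) and (4) with (2) and avoids the separate factorisation arguments. The verification that $F_1,F_2$ restrict to $(k^3+Ck^2+AC^qk+A)Z_0X_0(X_0+Z_0)$ and its $\phi$-partner is the right computation, and the $\phi$-invariance and hyperplane check go through as you describe.

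Part (1), however, is where your sketch has a genuine gap. The assertion that ``a degree/parity comparison with $g_3^2$ shows no such $\gamma$ can exist'' is not enough: a naive valuation argument in $Z_0$ or $Z_1$ does not produce a contradiction, because the equation $\gamma^2+g_3Z_0\gamma+g_1g_2=0$ is compatible with $\gamma=\gamma^{(3)}+\gamma^{(4)}$ at the level of degrees and of the $Z_0$- and $Z_1$-adic valuations. The paper's argument is genuinely computational here: one writes down the three relations among $\gamma^{(3)},\gamma^{(4)},\beta^{(2)},\beta^{(3)}$ coming from the homogeneous pieces $b_0^{(6)},b_0^{(7)}$, eliminates $\gamma^{(3)},\gamma^{(4)}$ to obtain a single polynomial identity $h\equiv 0$ in $Z_0,Z_1$, and then exhibits the coefficient of $Z_0^7Z_1^5$ in (the numerator of) $h$ as $(A^{q+1}+1)^2(C^{q+1}+1)^3(AE^q+E)^{q+1}$. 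All three factors are nonzero under the case-(1) hypotheses (the last one because $AE^q+E=0$ together with $B=0$, $D=AC^q$ would force (C1) or (C2)), and that is what kills the putative factorisation. Your sketch does not indicate this mechanism, and there is no shortcut around it that I can see.
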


\begin{rem}
The APN status of the function remains open when $(A^{q+1}+1)=0$, $(C^{q+1}+1)\neq 0$, $AE^q+E \ne 0$, and $T^3+CT^2+AC^qT+A$ has no roots in $\mathbb{F}_{q^2}$, the function may be APN if the conic $AE^qZ_0^2 + CZ_0 + Z_1 = 0$ contributes only trivial solutions. This requires further analysis of the $\mathbb{F}_q$-rational points on this conic.
\end{rem}

\begin{proof}[Proof of Theorem~\textup{\ref{B0_1}}]
Recall from Section~\ref{sec2} that $\mathcal{W}$ is defined by the system
$$\begin{cases}
(AZ_0+Z_1^2 E+Z_1 D)X_0^2 + (Z_0^2A + Z_1^2C + Z_1B)X_0\\
\hspace{1 cm}+ (Z_0^2E + Z_0C + Z_1)X_1^2 + (Z_0^2D + Z_0B + Z_1^2)X_1=0\\
(A^qZ_1+Z_0^{2}E^q+Z_0D^q)X_1^2 + (Z_1^2A^q + Z_0^{2}C^q + Z_0B^q)X_1\\ 
\hspace{1 cm}+ (Z_1^2E^q + Z_1C^q + Z_0)X_0^2 + (Z_1^2D^q + Z_1 B^q + Z_0^{2})X_0=0.
\end{cases}$$

Under the hypotheses $B=0$ and $AC^q+D=0$, the factorization in \eqref{FactorizationS} reads
$$
X_0(X_0+Z_0)(A^qCZ_0 + A^qZ_1 + E^qZ_0^2)(b_2X_0^2+b_1X_0+b_0)=0,
$$
where
\begin{align*}
b_2:= & \Big(  (A^{q+1}+1)(C^{q+1}+1)Z_0Z_1 +C(A^{q+1}+1)Z_0^2+C^q(A^{q+1}+1)Z_1^2 \\
&\quad +C^q(AE^q+E)Z_0^2Z_1+C(A^qE+E^q)Z_0Z_1^2\\
&\quad +(AE^q+E)Z_0^3+(A^qE+E^q)Z_1^3 \Big)^2, \\
b_1:=& b_2Z_0, \\
b_0:=& (A^{q+1}+1)(C^{q+1}+1)Z_0^3Z_1^2\Big( (A^{q+1}+1)C^{q+1}Z_0 + (A^{q+1}+1)C^qZ_1 \\
&\quad + (AE^q+E)C^qZ_0^2 + (A^qE+ E^q)Z_1^2 \Big).
\end{align*}

We prove each part separately.

\item \textbf{Proof of Part (1):} Assume $(A^{q+1}+1)(C^{q+1}+1) \neq 0$.

First, assume $AE^q+E = 0$. Note that $C\neq 0$ otherwise we are in case $(C1)$ or in the trivial case $A=B=C=D=E=0$. Now, by Remark~\ref{Remark}, if $b_2X_0^2+b_1X_0+b_0$ splits into degree-one factors in $X_0$, then
$$b_2X_0^2+b_1X_0+b_0=(\beta X_0 + \gamma)(\beta (X_0+Z_0) + \gamma),$$
with $\beta^2=b_2$ and $\gamma(\gamma + \beta Z_0) = b_0$. Since $\deg(\beta)=2$ and $\deg(b_0)=6$, then $\gamma$ is homogeneous of degree 3. Put
$$
\gamma =r Z_0^3+sZ_0^2Z_1+tZ_0Z_1^2+uZ_1^3.
$$ Such an $\gamma$ must make $h:=\gamma^2+\gamma\beta Z_0 + b_0$ the zero polynomial in $Z_0$ and $Z_1$. Let $h:=\sum_{i=0}^6h_iZ_1^iZ_0^{6-i}$, where
\begin{eqnarray*}
    h_0&=& r \Big(r+(A^{q+1}+1)C \Big),\\
     h_1&=& (A^{q+1}+1)\Big(r(C^{q+1}+1)+sC\Big) ,\\ 
      h_2&=& rC^q(A^{q+1} + 1)+ s^2 + s(A^{q+1}+1)(C^{q+1} + 1) + tC(A^{q+1}+1) + A^{q+1}C^{q+1}(C^{q+1}+1) ,\\
      h_3&=& (A^{q+1}+1)\Big( sC^q+t(C^{q+1}+1)+uC+C^q(C^{q+1}+1) \Big), \\
     h_4&=& t^2 + tC^q(A^{q+1}+1) + u(A^{q+1}+1)(C^{q+1} + 1) ,\\
      h_5&=& C^q(A^{q+1}+1)u,\\
       h_6&=& u^2.
\end{eqnarray*}

From $h_1=h_3=h_5=h_6=0$, we get $r=sC(C^{q+1}+1)^{-1}$, $t=C^q(C^{q+1}+1)^{-1}(s+C^{q+1}+1)$, $u=0$. After substituting them in $h_4$, from $h_4=0$ we get either $s=C^{q+1}+1$ or $s=A^{q+1}(C^{q+1}+1)$. In both cases, a contradiction follows from $h_0=0$.

Now, assume $AE^q + E \neq 0$.
By Remark~\ref{Remark}, if $b_2X_0^2+b_1X_0+b_0$ splits into degree-one factors in $X_0$, then
$$b_2X_0^2+b_1X_0+b_0=(\beta X_0 + \gamma)(\beta (X_0+Z_0) + \gamma),$$
with $\beta^2=b_2$ and $\gamma(\gamma + \beta Z_0) = b_0$. 

Note that $b_0^{(i)}\equiv 0$ if $i\notin \{6,7\}$ and $\beta^{(i)}\equiv 0$ if $i\notin \{2,3\}$. 
This implies $\gamma^{(i)}\equiv 0$ if $i\notin \{3,4\}$. 
From the condition $\gamma(\gamma + \beta Z_0) = b_0$, we obtain the system
\begin{equation}\label{System1}
\begin{cases}
\gamma^{(4)}(\gamma^{(4)}+Z_0 \beta^{(3)})=0\\
Z_0\beta^{(2)}\gamma^{(4)}+Z_0\beta^{(3)}\gamma^{(3)}=b_0^{(7)}\\
\gamma^{(3)}(\gamma^{(3)}+Z_0 \beta^{(2)})=b_0^{(6)}.
\end{cases}
\end{equation}
From the first two equations, we derive
$$
h:=b_0^{(6)}+\frac{b_0^{(7)}}{\beta^{(3)}Z_0}\left(\beta^{(2)}Z_0+\frac{b_0^{(7)}}{\beta^{(3)}Z_0}\right).
$$
After clearing denominators, the numerator of $h$ has coefficient of $Z_0^7Z_1^{5}$ equal to
$$(A^{q+1}+1)^2(C^{q+1}+1)^3(AE^q+E)^{q+1}.$$
By hypothesis, $(A^{q+1}+1)(C^{q+1}+1) \neq 0$, and we have shown that $AE^q+E \neq 0$. 
Therefore, all three factors are non-zero, so this coefficient is non-zero. Thus $h \not\equiv 0$, 
contradicting the requirement that $h \equiv 0$ for a factorization to exist.
This shows that $b_2X_0^2+b_1X_0+b_0$ is absolutely irreducible and has no degree-one factors. 
The variety $\mathcal{Z}$ defined by ($G$ was defined right before Proposition~\ref{Prop:CoeffX_1})
$$\begin{cases}
G(X_0,X_1,Z_0,Z_1)=0,\\
b_2X_0^2+b_1X_0+b_0=0
\end{cases}$$
is a complete intersection in $\mathbb{A}^4$ of two hypersurfaces, hence has dimension $4-2=2$. 
After removing the components $X_0=0$ and $X_0+Z_0=0$, which lie on the forbidden hyperplanes, 
the remaining part of $\mathcal{Z}$ is absolutely irreducible (since $b_2X_0^2+b_1X_0+b_0$ is 
absolutely irreducible). Since both defining equations are fixed by $\phi$, this component is 
$\phi$-fixed. Moreover, it is not contained in any of the forbidden hyperplanes. By 
Theorem~\ref{Th:Key}, for $q \geq 2^{20}$, the function $f_{A,B,C,D,E}(x)$ is not APN.

\item \textbf{Proof of Part (2):} Assume $(A^{q+1}+1)=0$, $(C^{q+1}+1)\neq 0$, $AE^q+E \ne 0$, 
and the cubic $T^3+CT^2+AC^qT+A$ has a root $k \in \mathbb{F}_{q^2}$ with $k \ne 1$ and $k^{q+1}=1$.
In this case, Equation~\eqref{FactorizationS} becomes 
$$
\overline{G}(X_0,Z_0,Z_1)=(A E^q + E)^2X_0^2(X_0 + Z_0)^2(A E^q Z_0^2 + C Z_0 + Z_1)
(A C^q Z_0^2 Z_1 + A Z_0^3 + C Z_0 Z_1^2 + Z_1^3)^2=0.
$$
The cubic factor $P(Z_0,Z_1) := A C^q Z_0^2 Z_1 + A Z_0^3 + C Z_0 Z_1^2 + Z_1^3$ can be 
rewritten (for $Z_0 \neq 0$) by setting $T = Z_1/Z_0$,
$$
P(Z_0,Z_1) = Z_0^3(T^3 + CT^2 + AC^qT + A).
$$
By hypothesis, this cubic in $T$ has a root $k \in \mathbb{F}_{q^2}$ with $k \ne 1$ and $k^{q+1}=1$. 
Since $k^{q+1}=1$, we have $k^q = k^{-1}$, which means the line $\mathcal{L}_k$ defined by 
$Z_1 = kZ_0$ is $\mathbb{F}_q$-rational.

We verify that the plane $\mathcal{P}$ defined by $Z_1=kZ_0$ and $X_1=kX_0$ satisfies the 
first equation of $\mathcal{W}$. Substituting into $F_1$ with $B=0$ and $D=AC^q$,
$$(AZ_0 + k^2Z_0^2E + kZ_0AC^q)X_0^2 + (Z_0^2A + k^2Z_0^2C)X_0 + (Z_0^2E + Z_0C + kZ_0)(kX_0)^2 
+ Z_0^2AC^q(kX_0).$$
Factoring out $Z_0X_0$ and using $k^3 + Ck^2 + AC^qk + A = 0$ (from the cubic), one can verify 
(by algebraic manipulation) that this expression vanishes. A similar verification holds for $F_2$. 
Moreover, $\mathcal{P}$ is fixed by $\phi$ (since the condition $k^{q+1}=1$ ensures invariance).

The plane $\mathcal{P}$ is not contained in any of the forbidden hyperplanes $X_0=0$, $X_1=0$, 
$Z_0=X_0$, $Z_1=X_1$, $Z_0=0$, $Z_1=0$ (for generic $k \neq 0,1$). Therefore, by 
Theorem~\ref{Th:Key}, for $q \geq 2^{20}$, the function $f_{A,B,C,D,E}(x)$ is not APN.

\item \textbf{Proof of Part (3):} Assume $(A^{q+1}+1)\neq 0$ and $(C^{q+1}+1)=0$.
When $(C^{q+1}+1)=0$ (so $C^{q+1}=1$) but $(A^{q+1}+1)\neq 0$, we have $b_0 \equiv 0$ from 
the factor $(C^{q+1}+1)$ in its expression. After clearing denominators, $G(X_0,X_1,Z_0,Z_1)$ 
and $\overline{G}(X_0,Z_0,Z_1)$ both contain the common factor
$$H = A^{q+1}CZ_0 + A^{q+1}Z_1 + AE^qZ_0^2 + A^qCEZ_1^2 + CE^qZ_1^2 + CZ_0 + EZ_0^2 + Z_1.$$
This can be rewritten as
$$H = (CZ_0 + Z_1)(A^{q+1} + 1) + (E + AE^q)Z_0^2 + E^q(A^qC + C)Z_1^2.$$
Since $C^{q+1}=1$, we have $C^q = C^{-1}$. By direct computation, $H$ is invariant under $\phi$.
\begin{itemize}
    \item When $AE^q + A^qC^3E + C^3E^q + E =0$, the polynomial $H$ factors, and the hyperplane 
$CZ_0 + Z_1 = 0$ is a component. This hyperplane $CZ_0 + Z_1 = 0$ is $\phi$-fixed: under 
$\phi$, it becomes $CZ_1 + Z_0 = 0$, which equals $Z_0 + C^qZ_1 = 0$. Since $C^{q+1}=1$, 
we have $C^q = C^{-1}$, so this is $Z_0 + C^{-1}Z_1 = 0$, or equivalently $CZ_0 + Z_1 = 0$.
This hyperplane is not contained in any of the forbidden hyperplanes. By Theorem~\ref{Th:Key}, 
for $q \geq 2^{20}$, the function $f_{A,B,C,D,E}(x)$ is not APN. 
When  $AE^q + A^qC^3E + C^3E^q + E \neq 0$, the polynomial $H$ is absolutely irreducible and thus defines a component of $\mathcal{W}$ invariant under $\phi$. It is clearly not contained in any  of the forbidden hyperplanes. By Theorem~\ref{Th:Key}, 
for $q \geq 2^{20}$, the function $f_{A,B,C,D,E}(x)$ is not APN.
\end{itemize}

\item \textbf{Proof of Part (4):} Assume $(A^{q+1}+1)=0$, $(C^{q+1}+1)=0$, and $AE^q+E \ne 0$.
We have that $\sqrt{A} Z_0 + \sqrt{C}Z_1$ is a common factor of $\overline{G}(X_0,Z_0,Z_1)$ and $G(X_0,X_1,Z_0,Z_1)$. Such a factor defines a hyperplane in $\mathcal{W}$, fixed by $\phi$, and distinct from the forbidden ones. Via Theorem \ref{Th:Key}, $f_{A,B,C,D,E}$ is not APN.
\end{proof}

\begin{prop}\label{B0_2}
Suppose that conditions $(C1)$ and $(C2)$ do not hold and let $B=0$. If $q$ is sufficiently large and $f_{A,B,C,D,E}(x)$ is APN, then $(AC^q+D)E = 0$.
\end{prop}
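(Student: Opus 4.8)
The plan is to prove the contrapositive: assuming $AC^q+D\neq 0$ and $E\neq 0$ (on top of $B=0$ and the failure of $(C1),(C2)$), I will exhibit an absolutely irreducible component of $\mathcal{W}$ that is fixed by $\phi$ and avoids the forbidden hyperplanes, so that Theorem~\ref{Th:Key} forces $f_{A,B,C,D,E}$ not to be APN for $q\geq 2^{20}$. Setting $B=0$ in \eqref{FactorizationS}, the polynomials $g_1,g_2,g_3$ specialize; the key feature is that $g_2$ becomes the homogeneous quartic
$$g_2=(AC^q+D)Z_0^4+(A^{q+1}+C^{q+1}+D^{q+1}+1)Z_0^2Z_1^2+(AC^q+D)^qZ_1^4,$$
whose extreme coefficients $AC^q+D$ and $(AC^q+D)^q$ are both nonzero, while $g_3$ keeps homogeneous parts $g_3^{(2)},g_3^{(3)}$ of degrees $2$ and $3$. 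Following the strategy recorded before Remark~\ref{Remark}, it suffices to show that $a_2X_0^2+a_1X_0+a_0$ has degree $2$ in $X_0$ and admits no factor of degree $1$ in $X_0$.

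First I would verify $a_2=g_3^2\neq 0$, i.e. $g_3^{(3)}\neq 0$: if all four coefficients of $g_3^{(3)}$ vanished, then with $E\neq 0$ one would get $AE^q+E=0$ (hence $A^{q+1}=1$) together with $C^qE+DE^q=0$ (hence $D=AC^q$), contradicting $AC^q+D\neq 0$. So the polynomial has degree $2$ in $X_0$. Next, exactly as in the proof of Theorem~\ref{B0_1}(1), a degree-one factor in $X_0$ would force, via Remark~\ref{Remark} and the homogeneous bookkeeping that yields $\gamma=\gamma^{(4)}+\gamma^{(3)}$, the consistency condition $h\equiv 0$, where $h$ is the rational expression assembled from $a_0^{(6)}=g_1^{(2)}g_2$, $a_0^{(7)}=g_1^{(3)}g_2$, $g_3^{(2)}$ and $g_3^{(3)}$ (the division being legitimate since $g_3^{(3)}\neq 0$). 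Clearing denominators and extracting the common factor $Z_0^2g_2$ (allowed because $g_2\neq 0$), this reduces to $K\not\equiv 0$, where
$$K=(g_3^{(3)})^2g_1^{(2)}+g_3^{(3)}g_3^{(2)}g_1^{(3)}+\bigl(pZ_0^2+s^qZ_1^2\bigr)^2g_2,\qquad s=AE^q+E,\ \ p=C^qE+DE^q.$$
Thus everything comes down to proving $K\not\equiv 0$ under $AC^q+D\neq 0$ and $E\neq 0$.

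To attack $K\not\equiv 0$ I would read off a single coefficient. Evaluating $K$ at $Z_1=0$ shows that the coefficient of $Z_0^8$ equals
$$s^2(C^{q+1}+D^{q+1})+sp(AD^q+C)+p^2(AC^q+D)=(AC^q+D)\,\Phi,$$
with $\Phi=(D^q+C^{2q})E^2+(AD^q+C)EE^q+(AC+D^2)(E^q)^2$. In the degenerate regime $AE^q+E=0$ one substitutes $E=AE^q$ and finds $\Phi=(AC^q+D)^2(E^q)^2\neq 0$, so this coefficient is nonzero and $K\not\equiv 0$; this disposes of the case in which $g_3^{(3)}$ nearly collapses.

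The main obstacle is the complementary regime $AE^q+E\neq 0$, where $\Phi$ (and hence the $Z_0^8$-coefficient) may vanish for special $E$: there I must obstruct the factorization through a second coefficient of $K$. Concretely, I expect to compute the coefficient of one further monomial and to prove that its vanishing, combined with $\Phi=0$, forces $AC^q+D=0$ or $E=0$, contradicting the hypotheses. Equivalently, one shows that the Artin--Schreier invariant $R=g_1g_2/(g_3Z_0)^2$ is not of the form $w^2+w$ in $\overline{\mathbb{F}_q}(Z_0,Z_1)$ by exhibiting a prime divisor along which $R$ fails $\wp$-triviality. Carrying out this bookkeeping—choosing the right monomial(s) and checking that the resulting expression in $A,C,D,E$ does not vanish off the locus $(AC^q+D)E=0$—is where the real work lies, and it is the step I expect to need either a careful pole analysis along a generic line $Z_1=kZ_0$ or a computer-algebra verification. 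Once $K\not\equiv 0$ is established, $a_2X_0^2+a_1X_0+a_0$ is absolutely irreducible of degree $2$ in $X_0$, the corresponding component of $\mathcal{Z}\subset\mathcal{W}$ is $\phi$-fixed and outside the forbidden hyperplanes, and Theorem~\ref{Th:Key} yields that $f_{A,B,C,D,E}$ is not APN, proving the contrapositive.
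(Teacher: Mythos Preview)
Your overall strategy (contrapositive, then apply Theorem~\ref{Th:Key} via the surface $a_2X_0^2+a_1X_0+a_0=0$) matches the paper, but the proof is genuinely incomplete, and the missing step is not just bookkeeping.

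First, your single $Z_0^8$-coefficient test is weaker than you indicate. In the regime $p=C^qE+DE^q=0$ (which you do not single out), one has $D=C^qE^{1-q}$, whence $D^{q+1}=C^{q+1}$ in $\mathbb{F}_{q^2}$; your $Z_0^8$-coefficient $s^2(C^{q+1}+D^{q+1})+sp(AD^q+C)+p^2(AC^q+D)$ then vanishes identically even when $s\neq 0$. So the ``one more monomial'' you hope for in the $s\neq 0$ case would already be needed here, and the paper in fact handles $p=0$ by substituting $D=C^qE^{1-q}$ \emph{before} computing $h$, obtaining the explicit nonzero numerator
\[
Z_1Z_0^3(A^qE+E^q)^{q+1}\bigl(EC^q(AE^q+E)Z_0^4+E^{q+1}(A^{q+1}+1)Z_0^2Z_1^2+CE^q(A^qE+E^q)Z_1^4\bigr)^2,
\]
rather than trying to pick off a single coefficient of $K$.

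Second, and more seriously, in the complementary case $p=C^qE+DE^q\neq 0$ the paper does \emph{not} attempt to prove that $a_2X_0^2+a_1X_0+a_0$ is irreducible at all. Instead it writes down, by hand, a polynomial $H(X_0,Z_0,Z_1)$ linear in $X_0$ (with coefficients built from $C,D,E$) such that the surface $\mathcal{C}:\{H=0,\ \phi(H)=0\}$ is absolutely irreducible, $\phi$-fixed, contained in $\mathcal{W}$, and off the forbidden hyperplanes. This explicit construction is the missing idea; it sidesteps entirely the question of whether $K\equiv 0$ can occur. Your plan to force irreducibility of $a_2X_0^2+a_1X_0+a_0$ in this regime may therefore be aiming at a statement that is not even true, and in any case the paper's route is both cleaner and complete.
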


\begin{proof}
We prove the contrapositive: if $(AC^q+D)E \neq 0$, then $f_{A,B,C,D,E}(x)$ is not APN.

Assume $(AC^q+D)E \ne 0$. By hypothesis, the factorization in \eqref{FactorizationS} reads
$$
X_0(X_0+Z_0)(A^qZ_1 + D^qZ_0 + E^qZ_0^2)(b_2X_0^2+b_1X_0+b_0)=0,
$$
where
\allowdisplaybreaks
\begin{align*}
    b_2:=& \Big ((A^{q+1}+C^{q+1}+D^{q+1}+1)Z_0Z_1 + (AD^q+C)Z_0^2 + (AE^q+E)Z_0^3 \\
    &\quad + (A^qD+C^q)Z_1^2 + (A^qE+E^q)Z_1^3  + (CE^q+D^qE)Z_0Z_1^2 \\
    &\quad + (C^qE+DE^q)Z_0^2Z_1 \Big)^2;\\
    b_1:=& b_2Z_0; \\
    b_0:=& \Big( (A^qC+D^q)Z_1^2 + (A^qD+C^q)Z_0Z_1 + (A^qE+E^q)Z_0Z_1^2 \\
&\quad + (C^{q+1}+ D^{q+1})Z_0^2 + (C^qE+DE^q)Z_0^3  \Big)  
    \Big( (A^{q+1}+C^{q+1}+D^{q+1}+1)Z_0^2Z_1^2 \\
    &\quad + (AC^q+D)Z_0^4 + (A^qC+D^q)Z_1^4 \Big).
\end{align*}
We distinguish two cases based on whether $C^qE+DE^q$ vanishes.

\textbf{Case 1: $C^qE+DE^q = 0$.}
Since $(A^qC+D^q)E \ne 0$ (from our hypothesis $(AC^q+D)E\neq 0$ and taking $q$-th powers), we have $C(AE^q+E) \ne 0$. 
From $C^qE+DE^q = 0$, we get $D = C^qE^{1-q}$ (since $E\neq 0$).
By Remark~\ref{Remark}, if $b_2X_0^2+b_1X_0+b_0$ splits into degree-one factors in $X_0$, then
$$b_2X_0^2+b_1X_0+b_0=(\beta X_0 + \gamma)(\beta(X_0+Z_0) + \gamma),$$
where $\beta^2=b_2$ and $\gamma(\gamma+\beta Z_0) = b_0$.

Note that $b_0^{(i)}\equiv 0$ if $i\notin \{6,7\}$ and $\beta^{(i)}\equiv 0$ if $i\notin \{2,3\}$. This shows that $\gamma^{(i)}\equiv 0$ if $i\notin \{3,4\}$. 
 As established in the proof of Proposition~\ref{B0_1}, the condition $\gamma(\gamma + \beta Z_0) = b_0$ leads to the system
\begin{equation}\tag{\ref{System1}}
\begin{cases}
\gamma^{(4)}(\gamma^{(4)}+Z_0 \beta^{(3)})=0\\
Z_0\beta^{(2)}\gamma^{(4)}+Z_0\beta^{(3)}\gamma^{(3)}=b_0^{(7)}\\
\gamma^{(3)}(\gamma^{(3)}+Z_0 \beta^{(2)})=b_0^{(6)}.
\end{cases}
\end{equation}
From the first two equations of System~\eqref{System1}, we obtain
$$
h:=b_0^{(6)}+\frac{b_0^{(7)}}{\beta^{(3)}Z_0}\left(\beta^{(2)}Z_0+\frac{b_0^{(7)}}{\beta^{(3)}Z_0}\right).
$$
After substituting $D=C^qE^{1-q}$ and clearing denominators, the numerator of $h$ equals
$$Z_1Z_0^3(A^qE + E^q)^{q+1}\Big(EC^q(AE^q + E)Z_0^4 + E^{q+1}(A^{q+1}+1)Z_0^2Z_1^2+CE^q(A^q E + E^q)Z_1^4\Big)^2.$$

Thus, $h$ is not the zero polynomial, which contradicts the requirement that $h\equiv 0$ for a factorization to exist. This shows that $b_2X_0^2+b_1X_0+b_0$ has no degree-one factors.

Consequently, the variety $\mathcal{Z}$ defined by
$$\begin{cases}
G(X_0,X_1,Z_0,Z_1)=0\\
b_2X_0^2+b_1X_0+b_0=0
\end{cases}$$
contains an absolutely irreducible component (after removing the common factors corresponding to $X_0=0$ and $X_0+Z_0=0$). Moreover, both $G$ and the polynomial $b_2X_0^2+b_1X_0+b_0$ are fixed by $\phi$, so $\mathcal{Z}$ is $\phi$-stable and contains a $\phi$-fixed absolutely irreducible component not contained in the forbidden hyperplanes. By Theorem~\ref{Th:Key}, if $q$ is sufficiently large, $f_{A,B,C,D,E}(x)$ is not APN.

\textbf{Case 2: $C^qE+DE^q \ne 0$.}

We construct an explicit $\phi$-fixed component of $\mathcal{W}$ not contained in the forbidden hyperplanes.

Consider the polynomial
\begin{eqnarray*}
    H(X_0,Z_0,Z_1) &:=& \Big(
        (C^2 D E^{2q} + C^{2q} E^2 + D^2 E^{2q} + D^{2q+1} E^2)E^qZ_1\\
        &&+ (C^2 E^{2q} + C^{2q} D^q E^2 + D^{q+2} E^{2q} + D^{2q}E^2)EZ_0\\
        &&+(C^q E + D E^q)^2E^{q+1}Z_0^2  + (C E^q + D^q E)^2E^{q+1}Z_1^2 \Big)X_0\\
        &&+(CE^q + D^q E)\Big((C^2 E^q + C D^q E + C^q E + D E^q)E^qZ_1^2\\
        &&+(CE^q + C^{2q} E + C^q D E^q + D^q E)EZ_0^2\Big).
\end{eqnarray*}

Note that $H(X_0,Z_0,Z_1)$ satisfies:
\begin{enumerate}
\item $H\not\equiv 0$, under our hypotheses $(AC^q+D)E\neq 0$ and $C^qE+DE^q\neq 0$;
\item The variety $\mathcal{C}$ defined by $H(X_0,Z_0,Z_1)=0$ and $\phi(H(X_0,Z_0,Z_1))=0$ is absolutely irreducible and fixed by $\phi$;
\item Direct substitution verifies that $\mathcal{C}\subseteq \mathcal{W}$ (i.e., points on $\mathcal{C}$ satisfy both equations defining $\mathcal{W}$);
\item $\mathcal{C}$ is not contained in any of the forbidden hyperplanes $X_0=0$, $X_1=0$, $Z_0=X_0$, $Z_1=X_1$, $Z_0=0$, $Z_1=0$.
\end{enumerate}
Therefore, by Theorem~\ref{Th:Key}, if $q$ is sufficiently large, $f_{A,B,C,D,E}(x)$ is not APN.

We have thus shown that if $(AC^q+D)E \neq 0$, then in both cases (whether $C^qE+DE^q = 0$ or $C^qE+DE^q \neq 0$), the function $f_{A,B,C,D,E}(x)$ is not APN for $q$ sufficiently large. This completes the proof of  the proposition.
\end{proof}

The case $B=E=0$ was previously investigated \cite{li}, where the authors characterize APN functions with $A, D \in \mathbb{F}_{q^2}$ and $C \in \mathbb{F}_q$, see Theorem 1.2. Subsequently, Chase and Lisoněk in \cite{lisonek} show that if $A, D, C \in \mathbb{F}_{q^2}$, any such APN functions are necessarily equivalent to Gold functions. The characterization result mentioned above is not fully correct; indeed, we found a counterexample. If we consider $q^2=2^8$ and $\alpha$ a root of the polynomial $x^8 + x^4 + x^3 + x^2 + 1$, then $F(x)=x^{3q}+x^{2q+1}+\alpha^8 x^{q+2}+\alpha x^3$ is not APN since the equation $F(x+1) + F(x) = 0$ possesses exactly four solutions. To address this discrepancy, we present the following proposition.

\begin{prop}\label{B0_3}
 Suppose that conditions $(C1)$ and $(C2)$ do not hold. Let $B=E=0$ and $AC^q+D \ne 0$ and $q$ large enough. If $f_{A,B,C,D,E}(x)$ is APN then  one of the following possibly holds:
\begin{enumerate}
    \item[(i)] $A^{q+1}+C^{q+1}+D^{q+1}+1=0$ and $(AC^q+D)^{q-1}=(AD^q+C)^{2q-1}$; or
    \item[(ii)] $A^{q+1}+C^{q+1}+D^{q+1}+1\neq 0$, $C\neq AD^q$, and $p_1p_2=0$, where 
\allowdisplaybreaks
\begin{eqnarray*}
p_1&:=&A^{q+2}C^q + A^2 D^{2q} + A^{q+1} D + A C^{2q+1}\\
&&\quad + A C^q D^{q+1} + A C^q + C^2 + C^{q+1} D + D^{q+2}+ D,\\
p_2&:=&A^{2q+2}C^{q+1} + A^{q+1}(C^{q+1}D^{q+1}+C^{q+1}\\
&&\quad + D^{q+1}+D^{2q+2}+1) + C^{3q+3}+ C^{2q+2}+ C^{q+1}D^{q+1}\\
&&\quad +D^{3q+3} + C^{2q+2}D^{q+1}+ C^{q+1} D^{2q+2} \\
&&\quad +\mathrm{Tr}_{q^2/q}(A^{q+2}(C D^{2q}+ C^q D^q)+ A^2 D^{3q}\\
&&\quad + A (C^{2q+1} D^q+C^{3q}+ C^q D^{2q+1}+C^q D^q)+C^2 D^q  ).
\end{eqnarray*}
\end{enumerate}
\end{prop}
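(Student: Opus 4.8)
The plan is to re-run the strategy of Proposition~\ref{B0_1}, exploiting the crucial simplification that occurs when $E=0$. First I would specialize the factorization \eqref{FactorizationS} to $B=E=0$, $AC^q+D\neq 0$, obtaining the displayed form $X_0(X_0+Z_0)(A^qZ_1+D^qZ_0+E^qZ_0^2)(b_2X_0^2+b_1X_0+b_0)$ with the homogeneous $b_2,b_1,b_0$ recorded in the statement. Exactly as in Proposition~\ref{B0_1}, if the quadratic factor $b_2X_0^2+b_1X_0+b_0$ is absolutely irreducible, then the complete intersection $\mathcal{Z}$ has dimension $2$, and after deleting the components $X_0=0$ and $X_0+Z_0=0$ it carries a $\phi$-fixed absolutely irreducible component not contained in the forbidden hyperplanes; Theorem~\ref{Th:Key} then makes $f_{A,B,C,D,E}$ non-APN for $q\geq 2^{20}$. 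Arguing by contraposition, it therefore suffices to show that if $f$ is APN then $b_2X_0^2+b_1X_0+b_0$ admits a degree-one factor in $X_0$, and that such a factor already forces (i) or (ii).

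The decisive new feature is homogeneity. With $E=0$ both $b_2$ and $b_0$ are homogeneous (of degrees $4$ and $6$), and the square root $\beta:=uZ_0^2+vZ_0Z_1+wZ_1^2$ of $b_2$, where $u:=AD^q+C$, $v:=A^{q+1}+C^{q+1}+D^{q+1}+1$, $w:=A^qD+C^q$, is purely of degree $2$; in particular the degree-$3$ part $\beta^{(3)}$ that drove System~\eqref{System1} in Proposition~\ref{B0_1} now vanishes. Consequently, by Remark~\ref{Remark}, a degree-one factor exists if and only if there is a single homogeneous $\gamma=c_3Z_0^3+c_2Z_0^2Z_1+c_1Z_0Z_1^2+c_0Z_1^3$ solving $\gamma^2+\beta Z_0\,\gamma=b_0$. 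Writing $b_0=p\,r$ as in the statement and matching the seven coefficients of this degree-$6$ identity yields an overdetermined linear-plus-Artin--Schreier system in $c_0,\dots,c_3$. Two relations solve at once: the $Z_1^6$-coefficient gives $c_0^2=(A^qC+D^q)^2$, hence $c_0=A^qC+D^q=(AC^q+D)^q\neq 0$, and the $Z_0Z_1^5$-coefficient is then automatically satisfied because the $Z_0Z_1^2$-coefficient $w$ of $\beta Z_0$ equals the $Z_1^2$-coefficient of $p$.

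The content is the compatibility of the remaining equations in $c_1,c_2,c_3$, which I would split along $v$. If $v=0$ then the $R_2=v$ terms disappear: the $Z_0^5Z_1$-coefficient gives $uc_2=w(AC^q+D)$ and the $Z_0^3Z_1^3$-coefficient gives $uc_0+wc_2=0$; substituting $c_0=(AC^q+D)^q$ and $w=u^q$ collapses these to $u^2(AC^q+D)^q=u^{2q}(AC^q+D)$, which (dividing by $u^2(AC^q+D)\neq 0$) is exactly $(AC^q+D)^{q-1}=(AD^q+C)^{2(q-1)}$, i.e. condition (i); the degenerate subcase $u=w=0$ must be checked separately and shown not to produce a new APN family. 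If instead $v\neq 0$ and $w\neq 0$ — equivalently $C\neq AD^q$, since $w=u^q$ gives $w=0\iff u=0\iff C=AD^q$ — I would solve the two genuinely linear equations for $c_2$ and $c_1$ in terms of $c_3$, feed them into the two remaining quadratic relations, and eliminate $c_3$ against the Artin--Schreier relation $c_3^2+uc_3=(C^{q+1}+D^{q+1})(AC^q+D)$ coming from the $Z_0^6$-coefficient. Since this relation pins $c_3$ only up to the two conjugate roots $c_3,\,c_3+u$, the elimination is intrinsically a product over the two branches, which is the source of the single condition $p_1p_2=0$. Finally, the leftover subcase $v\neq 0$, $C=AD^q$ should be shown inconsistent, so that there the quadratic stays absolutely irreducible and $f$ is non-APN (consistent with its exclusion from the conclusion).

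I expect the elimination in the case $v\neq 0$, $C\neq AD^q$ to be the main obstacle: producing the explicit $p_1$, and especially the second factor $p_2$ with its $\mathrm{Tr}_{q^2/q}$ terms, requires careful symmetric-function bookkeeping over the two Artin--Schreier roots of the $Z_0^6$-relation together with repeated use of the conjugacy identities $w=u^q$ and $A^qC+D^q=(AC^q+D)^q$ to descend the branch-symmetric combination into $\mathbb{F}_q$; the appearance of a trace is forced precisely because $p_2$ must be invariant under the Frobenius symmetry $\phi$. Only the necessity direction is needed, so I would not attempt to prove that (i) or (ii) is sufficient for APN.
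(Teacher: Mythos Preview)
Your proposal is correct and follows essentially the same route as the paper: specialize \eqref{FactorizationS} to $B=E=0$, use Remark~\ref{Remark} to reduce the existence of a degree-one factor to finding a homogeneous cubic $\gamma$ with $\gamma^2+\beta Z_0\gamma=b_0$, match the seven coefficients, and split on whether $v=A^{q+1}+C^{q+1}+D^{q+1}+1$ vanishes. The paper handles the degenerate subcase $C=AD^q$ (your $u=w=0$) by observing that a specific coefficient ($h_2$ when $v=0$, and $h_3,h_4$ when $v\neq0$) is then forced to be nonzero, so no factor exists; in the $v\neq0$, $C\neq AD^q$ case it reduces $h_2$ modulo the Artin--Schreier relation $h_0$ to a linear equation in $r=c_3$, pairs it with the linear relation coming from $h_3$, and eliminates $r$ to obtain $p_1p_2=0$---which is the same elimination you describe, though your framing via the two Artin--Schreier branches is a slightly different (and arguably more conceptual) way to see why the outcome is a product.
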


\begin{proof}
By hypothesis, the factorization in \eqref{FactorizationS} reads
      $$
      X_0(X_0+Z_0)(A^qZ_1 + D^qZ_0)(b_2X_0^2+b_1X_0+b_0),
      $$
      where
\begin{align*}
          b_2:=& \Big ((A^{q+1}+C^{q+1}+D^{q+1}+1)Z_0Z_1 + (AD^q+C)Z_0^2 + (A^qD+C^q)Z_1^2 \Big)^2,\\
          b_1:=& b_2Z_0, \\
          b_0:=& \Big( (A^qC+D^q)Z_1^2 + (A^qD+C^q)Z_0Z_1 +  (C^{q+1}+ D^{q+1})Z_0^2 \Big)  \\&\cdot \Big( (A^{q+1}+C^{q+1}+D^{q+1}+1)Z_0^2Z_1^2 + (AC^q+D)Z_0^4 + (A^qC+D^q)Z_1^4   \Big).
\end{align*}
Since $f_{A,B,C,D,E}(x)$ is assumed to be APN, then  $a_2X_0^2+a_1X_0+a_0$ must contain a factor of degree one in $X_0$. 

\item \textbf{Case 1: $A^{q+1}+C^{q+1}+D^{q+1}+1=0$.}
In this case, 
\begin{align*}
          b_2:=& \Big ((AD^q+C)Z_0^2 + (A^qD+C^q)Z_1^2 \Big)^2,\\
          b_1:=& b_2Z_0, \\
          b_0:=& \Big( (A^qC+D^q)Z_1^2 + (A^qD+C^q)Z_0Z_1 +  (C^{q+1}+ D^{q+1})Z_0^2 \Big)  \\
          &\qquad\qquad \cdot \Big(  (AC^q+D)Z_0^4 + (A^qC+D^q)Z_1^4   \Big).
\end{align*} 
By Remark \ref{Remark}, if the polynomial splits in two factors of degree one in $X_0$, then 
$$
b_2X_0^2+b_1X_0+b_0=(\beta X_0 + \gamma)(\beta(X_0+Z_0)+\gamma),
$$
where $\beta^2=b_2$ and $b_0=\gamma(\gamma+ \beta Z_0)$. Since $\beta$ is homogeneous of degree $2$ and $b_0$ is homogeneous of degree $6$, then $\gamma$ must be homogeneous of degree $3$. Put
$$
\gamma =r Z_0^3+sZ_0^2Z_1+tZ_0Z_1^2+uZ_1^3.
$$
The polynomial $h:=b_0+\gamma \beta Z_0 + \gamma^2$ must be the zero polynomial. Let $h:=\sum_{i=0}^6h_iZ_1^iZ_0^{6-i}$, where
\begin{eqnarray*}
    h_0&=& r^2 + r(AD^q + C) + (AC^q+D)(C^{q+1}+D^{q+1}),\\
     h_1&=&  s(AD^q + C) + (AC^q+D)(A^qD+C^q),\\ 
      h_2&=& r(A^qD + C^q) + s^2 + t(AD^q + C) + (AC^q+D)(A^qC+D^q),\\
      h_3&=& s(A^qD + C^q) + u(AD^q + C), \\
     h_4&=& t^2 + t(A^qD + C^q) + (A^qC+D^q)(C^{q+1}+D^{q+1}),\\
      h_5&=& u(A^qD + C^q) + (A^qC+D^q)(A^qD+C^q),\\
       h_6&=& u^2 + A^{2q}C^2 + D^{2q}.
\end{eqnarray*}
Note that if  $C=AD^q$, then $h_2=(AC^q+D)(A^qC+D^q)$ and it cannot vanish by assumption. Thus, we can assume $AD^q+C \ne 0$. From $h_6=0$, we get $u=A^qC+D^q$. From $h_1$ and $h_3$, we get 
$$
s=\frac{(AC^q+D)(A^qD+C^q)}{AD^q+C}=\frac{(AC^q+D)^q(A^qD+C^q)^q}{(AD^q+C)^q},
$$
that is, 
$$(AC^q+D)^{q-1}=(AD^q+C)^{2q-1}.$$

\item \textbf{Case 2: $A^{q+1}+C^{q+1}+D^{q+1}+1\neq 0$.}
Note that $b_2, b_1\not\equiv 0$ in this case. 
If $C=AD^q$, then $A^{q+1}+C^{q+1}+D^{q+1}+1=(A^{q+1}+1)(D^{q+1}+1)\neq 0$. We apply again the same argument as in the previous proofs. If the polynomial $b_2 X_0^2+b_1X_0+b_0$ splits into degree-one factors in $X_0$, then 
$$
b_2 X_0^2+b_1X_0+b_0=(\beta X_0 + \gamma)(\beta (X_0+Z_0) + \gamma),
$$
where $\beta^2=b_2$ and $\gamma$ is homogeneous of degree three. Therefore, $b_0=\gamma \beta Z_0 + \gamma^2$. Consider 
$$\gamma= r Z_0^3+s Z_0^2Z_1+tZ_0Z_1^2+uZ_1^3.$$
Such an $\gamma$ must make $h:=b_0+\gamma \beta Z_0 + \gamma^2$ the zero polynomial in $Z_0$ and $Z_1$.  Let $h:=\sum_{i=0}^6h_iZ_1^iZ_0^{6-i}$, where
\allowdisplaybreaks
\begin{eqnarray*}
    h_0&=& r^2 + r(AD^q + C) + (AC^q+D)(C^{q+1}+D^{q+1}),\\
     h_1&=& r(A^{q+1}+C^{q+1}+D^{q+1}+1)+ s(AD^q + C) + (AC^q+D)(A^qD+C^q),\\ 
      h_2&=& r(A^qD + C^q) + s^2 +s(A^{q+1}+C^{q+1}+D^{q+1}+1) +t(AD^q + C)\\
      &&+ A^{q+1}D^{q+1} + A C^q D^q + A^q C D + C^{2q+2} + C^{q+1} + D^{2q+2},\\
      h_3&=& s(A^qD + C^q) +t(A^{q+1}+C^{q+1}+D^{q+1}+1)+ u(AD^q + C)\\
      &&+(A^{q+1}+C^{q+1}+D^{q+1}+1)(A^qD + C^q), \\
     h_4&=& t^2 + t(A^qD + C^q) + +u(A^{q+1}+C^{q+1}+D^{q+1}+1)+(A^qC+D^q)(A^{q+1} + 1),\\
      h_5&=& (A^qD + C^q)(u + (A^qC+D^q)),\\
       h_6&=& (u + A^{q}C + D^{q})^2.
\end{eqnarray*} 
From  $h_6=0$ we obtain $u = A^qC + D^q$. 
When $C=AD^q$, $h_4=h_3=0$ yields 
\begin{eqnarray*}
    t^2 + D^{2q+1}(A^{q+1}+1)^2=0,\\
    t(A^{q+1}+C^{q+1}+D^{q+1}+1)=0.
\end{eqnarray*}
Thus $t=0$ and $D^{2q+1}(A^{q+1} + 1)^2=0$, a contradiction. Therefore, no factors exist when $C=AD^q$.

Now assume $C\neq AD^q$. 
From $h_1=0$ it follows 
$$s=\frac{r(A^{q+1}+C^{q+1}+D^{q+1}+1)+(AC^q+D)(A^qD+C^q)}{AD^q + C}.$$

From $h_0=0$ we obtain 
$$r^2 = (A D^q + C)r + AC^{2q+1} + A C^q D^{q+1} + C^{q+1} D + D^{q+2}.$$ 

 Combining it with $h_2=0$ we obtain a-degree one polynomial in $r$, whose coefficient is $(A^q D + C^q)^{2q+1}$. Also, $h_3=0$ is another degree-one polynomial in $r$, whose coefficient is $(A^{q+1} + C^{q+1} + D^{q+1} + 1)(A^q D + C^q)\neq 0$. Combining these two equations and  eliminating $r$ yields the condition $p_1p_2=0$.

Conversely, one can verify by direct computation that when $p_1p_2=0$, the system $h_i=0$ for $i=0,1,2,3,4$ admits a solution $(r,s,t)$, confirming that the factorization exists. Therefore, factors of degree one exist if and only if $p_1p_2=0$ when $C\neq AD^q$.
\end{proof}

\section{Case \texorpdfstring{$ B \ne 0$}{B ne 0}}
\label{sec4}

From now on we consider the case $B\neq 0$.
This first result shows that in the general case $f_{A,B,C,D,E}(x)$ is not APN. 
\begin{thm}
\label{Th:SmallestHomogeneousPart}

Suppose that conditions $(C1)$ and $(C2)$ do not hold.   Suppose that 
    \begin{enumerate}
    \item[(C3)] $(A D^q+C,A^{q+1}+C^{q+1}+D^{q+1}+1)\neq (0,0)$; and 
    \item[(C4)] $h_1:= A^{q+1} B^{q+1} + A B^{2q} + A^q B^2 + B^2 C^q D^q + B^{q+1}C^{q+1}+ B^{q+1} D^{q+1} + B^{q+1} + B^{2q} C D\neq 0$.
\end{enumerate}
 Then $a_2X_0^2+a_1X_0+a_0$ has no factor of degree one in $X_0$. Consequently,  $$f_{A,B,C,D,E}(x):= x(Ax^2+Bx^q+Cx^{2q})+x^2(D x^q+E x^{2q})+x^{3q}\in \mathbb{F}_{q^2}[x] $$
is not APN if $q$ is large enough.
\end{thm}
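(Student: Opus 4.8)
The plan is to prove the purely algebraic assertion first---that $a_2X_0^2+a_1X_0+a_0$ from the factorization~\eqref{FactorizationS} has no factor of degree one in $X_0$---and then to invoke Theorem~\ref{Th:Key}. Once this polynomial is known to have degree two in $X_0$ with no linear factor in $X_0$, dividing out $\gcd(a_2,a_1,a_0)$ leaves an absolutely irreducible polynomial; the associated surface (birational to $\mathcal{H}$, cut out together with $G=0$) is $\phi$-fixed, sits inside $\mathcal{W}$, and meets none of the six forbidden hyperplanes, so Theorem~\ref{Th:Key} delivers non-APN-ness for $q$ large enough (indeed $q\geq 2^{20}$).

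First I would record that $a_2=g_3^2$ and $a_1=a_2Z_0$, and use Condition~$(C6)$ to guarantee $g_3\neq 0$. The degree-two part of $g_3$ is
\[
g_3^{(2)}=(AD^q+C)Z_0^2+(A^{q+1}+C^{q+1}+D^{q+1}+1)Z_0Z_1+(A^qD+C^q)Z_1^2,
\]
and since $AD^q+C=0$ forces $A^qD+C^q=0$ (apply the $q$-power), Condition~$(C6)$ keeps the middle coefficient nonzero in that degenerate case; hence $g_3^{(2)}\neq 0$, so $g_3\neq 0$ and the polynomial is genuinely quadratic in $X_0$. By Remark~\ref{Remark}, any splitting into linear-in-$X_0$ factors must read $(g_3X_0+\gamma)(g_3(X_0+Z_0)+\gamma)$ with $\gamma\in\overline{\mathbb{F}}_q[Z_0,Z_1]$ and $\gamma(\gamma+g_3Z_0)=a_0=g_1g_2$.

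The decisive step is a comparison of smallest homogeneous parts. One computes that $g_1$ begins in degree one with $g_1^{(1)}=(BD^q+B^qC)Z_0+(A^qB+B^q)Z_1$, and $g_2$ begins in degree three with $g_2^{(3)}$; Condition~$(C7)$ is exactly what forbids either from vanishing, since $g_1^{(1)}=0$ or $g_2^{(3)}=0$ would force $h_1=0$. Thus $a_0$ has smallest homogeneous part $g_1^{(1)}g_2^{(3)}$ of degree four. In characteristic two, $\gamma^2$ contributes only even-degree homogeneous parts while $\gamma g_3Z_0$ begins in degree $\deg\gamma^{(L)}+3$, so a short degree count forces $\gamma$ to begin in degree two and
\[
a_0^{(4)}=(\gamma^{(2)})^2 .
\]
Consequently $g_1^{(1)}g_2^{(3)}$ must be a perfect square, hence free of odd-exponent monomials. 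The $Z_0Z_1^3$-coefficient cancels automatically because the $Z_0$-coefficient of $g_1^{(1)}$ coincides with the $Z_0Z_1^2$-coefficient of $g_2^{(3)}$ (and likewise the $Z_1$- and $Z_1^3$-coefficients), while the $Z_0^3Z_1$-coefficient works out to be exactly $h_1$. Condition~$(C7)$ then produces a contradiction, so no linear factor in $X_0$ can exist.

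I expect the only real obstacle to be computational: extracting $g_1^{(1)}$, $g_2^{(3)}$ and $g_3^{(2)}$ from the unwieldy expressions for $a_0$ and $a_2$, and checking the two coefficient identities---that the $Z_0Z_1^3$-coefficient of $g_1^{(1)}g_2^{(3)}$ cancels and that its $Z_0^3Z_1$-coefficient equals $h_1$. Notably, unlike the $B=0$ analysis this argument never needs the full top-degree system~\eqref{System1}: the obstruction already lives in the smallest homogeneous part, so once the coefficient identities are verified the proof closes immediately through Theorem~\ref{Th:Key}.
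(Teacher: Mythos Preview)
Your argument is correct and shares the paper's core strategy: force the lowest homogeneous part of $a_0=g_1g_2$ to be a perfect square and then contradict this via $h_1\neq 0$. The execution differs in two places. First, the paper works with the general linear-factor relation~\eqref{Equation1} in free $f,h$ and runs a three-way case split on $\deg f^{(L)}$ versus $\deg h^{(L)}$; you instead invoke Remark~\ref{Remark} to normalize $\beta=g_3$ from the outset and do a single degree count on $\gamma$, which collapses the paper's case analysis. Second, to show $g_1^{(1)}g_2^{(3)}$ is not a square, the paper computes $\mathrm{Res}_{Z_0}(g_1^{(1)},g_2^{(3)})=(A^qB+B^q)^2h_1Z_1^3$ and argues that a linear times a coprime cubic cannot be a square; you use the characteristic-$2$ fact that squares have only even exponents and compute the $Z_0^3Z_1$-coefficient of the product directly as $h_1$. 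Your route is a touch more elementary and makes the role of $h_1$ explicit, while the paper's resultant additionally certifies $g_1^{(1)}\nmid g_2^{(3)}$, information you do not need.
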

\begin{proof}
It can be easily checked, by Proposition \ref{Prop:CoeffX_1}, that Conditions (C3) and (C4) imply that the coefficient of $X_1$ in $G(X_0, X_1, Z_0, Z_1)$ is non-vanishing.

Consider again Equation \eqref{Equation1}. If it holds for some $f,h\in \overline{\mathbb{F}_q}[Z_0,Z_1]$ then it also holds for the smallest homogeneous parts in Equation \eqref{Equation1}. 
Such a homogeneous part is given by a linear combination (with coefficients $0,1$) of 
\begin{equation}\label{Equation2}
     (g_3^{(L)})^2(f^{(L)})^2, \qquad (g_3^{(L)})^2Z_0f^{(L)}h^{(L)}, \qquad g_1^{(L)}g_2^{(L)}(h^{(L)})^2.
\end{equation}

If there exist polynomials $f$ and $h$ satisfying Equation \eqref{Equation1}, then the smallest homogeneous part $F^{(L)}$ in the left-hand side of Equation \eqref{Equation1} must vanish. 
Let $\alpha^{(L)}=\deg(f^{(L)})$ and $\beta^{(L)}=\deg(h^{(L)})$. Note that Conditions (C3) and (C4) imply also 
 $$(A^q B+B^q,B D^q+B^q C)\neq (0,0).$$
In fact, $(A^q B+B^q,B D^q+B^q C)= (0,0)$ yields either $B=0$ or $A^{q+1}=1$. In the former case $h_1=0$, a contradiction. In the latter case, $(A^q B+B^q,B D^q+B^q C)= (0,0)$ gives $B^{q+1}=0$, so $B=0$, yielding $h_1=0$, again a contradiction.
In particular,
\begin{eqnarray*}
g_1^{(L)}&:=&(A^q B+B^q) Z_1 + (B D^q+B^q C)Z_0;\\
g_2^{(L)}&:= &(AB^q+B)Z_0^3  + (B C^q+B^q D)Z_0^2 Z_1 + (B D^q+ B^q C) Z_0 Z_1^2   + (A^q B+B^q)Z_1^3;\\
g_3^{(L)}&:=& (A D^q+C)Z_0^2 +(A^{q+1}+C^{q+1}+D^{q+1}+1) Z_0 Z_1+ (A^q D+C^q)Z_1^2.
\end{eqnarray*}
We distinguish a few cases.
\begin{enumerate}
    \item $\alpha^{(L)}>\beta^{(L)}$. Then $\deg(g_1^{(L)}g_2^{(L)}(h^{(L)})^2)=4+2\beta^{(L)}$ is lower than $\deg((g_3^{(L)})^2(f^{(L)})^2)=4+2\alpha^{(L)}$ and $\deg((g_3^{(L)})^2Z_0f^{(L)}h^{(L)})=5+\alpha^{(L)}+\beta^{(L)}$, a contradiction to $F^{(L)}\equiv 0$.
    \item $\alpha^{(L)}<\beta^{(L)}$. Then $\deg((g_3^{(L)})^2(f^{(L)})^2)=4+2\alpha^{(L)}$ is lower than $\deg(g_1^{(L)}g_2^{(L)}(h^{(L)})^2)=4+2\beta^{(L)}$ and $\deg((g_3^{(L)})^2Z_0f^{(L)}h^{(L)})=5+\alpha^{(L)}+\beta^{(L)}$, a contradiction to $F^{(L)}\equiv 0$.
    \item $\alpha^{(L)}=\beta^{(L)}$. Then $\deg((g_3^{(L)})^2(f^{(L)})^2)=4+2\alpha^{(L)}=\deg(g_1^{(L)}g_2^{(L)}(h^{(L)})^2)=4+2\beta^{(L)}$ and they are lower than $\deg((g_3^{(L)})^2Z_0f^{(L)}h^{(L)})=5+\alpha^{(L)}+\beta^{(L)}$. In this case $F^{(L)}\equiv 0$ yields
    $$(g_3^{(L)})^2(f^{(L)})^2=g_1^{(L)}g_2^{(L)}(h^{(L)})^2,$$
    and thus $g_1^{(L)}g_2^{(L)}$ must be a square. 
We claim this is impossible. To see this, compute the resultant of $g_1^{(L)}$ and $g_2^{(L)}$ with respect to $Z_0$,
    $$\text{Res}_{Z_0}(g_1^{(L)}, g_2^{(L)}) = (A^q B + B^q)^2 h_1 Z_1^3.$$
If $g_1^{(L)}g_2^{(L)}$ were a square, then $g_1^{(L)}$ and $g_2^{(L)}$ would share a common factor, which would make this resultant vanish. Since $(A^q B + B^q)\neq 0$ (as shown above) and $h_1\neq 0$ by hypothesis (C4), the resultant is non-zero. Therefore $g_1^{(L)}$ and $g_2^{(L)}$ share no common factors, which means their product cannot be a square. This contradiction shows that $F^{(L)}\not \equiv 0$, completing the proof.
\end{enumerate}
The proof is shown.
\end{proof}

In the following series of propositions we consider the remaining cases not covered by Theorem~\ref{Th:SmallestHomogeneousPart}.

\begin{prop}
\label{C7_1}
Suppose that conditions $(C1)$ and $(C2)$ do not hold. Suppose that $h_1=0$ and 
$BC^q + B^q D\neq 0.$

Let $q$ be large enough. If $f_{A,B,C,D,E}(x)$ is APN then one of the following conditions possibly holds
\begin{enumerate}
    \item $C^q=A^qB+A^qD+B^q$, $A^q B + B^q\neq0$, and $B^{q+1}+D^{q+1}+BD^q+B^qD+1=0$;
    \item $E=0$.
\end{enumerate}
\end{prop}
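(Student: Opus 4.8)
The plan is to argue by contraposition through the factorization criterion established above. Assuming $f_{A,B,C,D,E}$ is APN, Theorem~\ref{Th:Key} together with the reduction preceding Remark~\ref{Remark} forces $a_2X_0^2+a_1X_0+a_0$ to carry a factor of degree one in $X_0$; by Remark~\ref{Remark} this is equivalent to the existence of $\gamma\in\overline{\mathbb{F}_q}[Z_0,Z_1]$ with $\beta=g_3$ and $\gamma(\gamma+g_3Z_0)=g_1g_2$. The whole task is to read off the arithmetic constraints on $A,B,C,D,E$ from this single polynomial identity. I first record that the hypothesis $h_1=0$ makes the resultant $\mathrm{Res}_{Z_0}(g_1^{(L)},g_2^{(L)})=(A^qB+B^q)^2h_1Z_1^3$ vanish identically, so $g_1^{(L)}$ and $g_2^{(L)}$ now share a common factor. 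This is exactly the degenerate situation left open by case $(3)$ in the proof of Theorem~\ref{Th:SmallestHomogeneousPart}, so the argument must be pushed beyond the smallest homogeneous part.

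\textbf{Step 1: isolating the role of $A^qB+B^q$.} If $A^qB+B^q=0$, then $g_1^{(L)}=(BD^q+B^qC)Z_0$ and $g_2^{(L)}=Z_0\bigl((AB^q+B)Z_0^2+(BC^q+B^qD)Z_0Z_1+(BD^q+B^qC)Z_1^2\bigr)$, so $g_1^{(L)}g_2^{(L)}$ is a nonzero constant multiple of $Z_0^2$ times a binary quadratic form whose middle coefficient is $BC^q+B^qD\neq0$. In characteristic two such a form is not a square, so $g_1^{(L)}g_2^{(L)}$ is not a perfect square; the smallest‑homogeneous‑part analysis of Theorem~\ref{Th:SmallestHomogeneousPart} then excludes any degree-one factor, contradicting the APN hypothesis. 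Hence $A^qB+B^q\neq0$, which already accounts for the side condition appearing in alternative $(1)$.

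\textbf{Step 2: the homogeneous expansion when $A^qB+B^q\neq0$.} Now $g_1^{(L)}\mid g_2^{(L)}$ gives $g_2^{(L)}=g_1^{(L)}(Z_1+\lambda Z_0)^2$ with $\lambda^2=(BC^q+B^qD)/(A^qB+B^q)$, so $g_1^{(L)}g_2^{(L)}=\bigl(g_1^{(L)}(Z_1+\lambda Z_0)\bigr)^2$ is a square and the lowest-order obstruction disappears. I would then expand $\gamma(\gamma+g_3Z_0)=g_1g_2$ by homogeneous degree. If $E=0$ we are in alternative $(2)$ and there is nothing more to do, so assume $E\neq0$; then $g_1$ has degree three, $g_1g_2$ has degree seven, and a degree count forces $\gamma=\gamma^{(2)}+\gamma^{(3)}$ with $\gamma^{(2)}=g_1^{(L)}(Z_1+\lambda Z_0)$ pinned down by the degree-four part. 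Matching the degree-five part gives $g_1^{(L)}(Z_1+\lambda Z_0)g_3^{(L)}Z_0=g_1^{(L)}g_2^{(4)}+g_1^{(2)}g_2^{(L)}$, the degree-seven part determines $\gamma^{(3)}$ through $\gamma^{(3)}g_3^{(3)}Z_0=g_1^{(3)}g_2^{(4)}$, and the degree-six part imposes the final compatibility. Eliminating $\gamma^{(3)}$ and the auxiliary $\lambda$ from this system should collapse to the two relations $C^q=A^q(B+D)+B^q$ and $(B+D)^{q+1}=1$, the latter being the factored form of $B^{q+1}+D^{q+1}+BD^q+B^qD+1=0$, i.e. exactly alternative $(1)$.

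\textbf{Main obstacle.} The hard part is the degree-five-through-seven elimination, because the higher parts are genuinely $E$-laden: the degree-two part $g_1^{(2)}$ contributes the term $(BE^q+B^qE)Z_0^2$, while the degree-six and seven equations involve the all-$E$ pieces $g_1^{(3)}=Z_0\bigl((C^qE+DE^q)Z_0^2+(A^qE+E^q)Z_1^2\bigr)$ and $g_3^{(3)}$. The delicate point is to show that, under $E\neq0$, these $E$-carrying contributions cannot be absorbed unless the $A,B,C,D$-relations of $(1)$ hold, so that the only remaining escape is $E=0$. I expect this to require a careful resultant/elimination computation together with a separate treatment of the degenerate configurations in which $g_3^{(L)}=0$ or $g_1^{(3)}=0$ despite $E\neq0$ (that is, $C^qE+DE^q=A^qE+E^q=0$), which must be checked not to produce spurious degree-one factors. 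The bookkeeping of these $E$-dependent cancellations, rather than any single conceptual step, is where the difficulty lies.
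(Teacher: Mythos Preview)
Your Step~1 contains a genuine error. Under the running hypothesis $h_1=0$, the assumption $A^qB+B^q=0$ (with $B\neq0$) forces $A^{q+1}=1$, and then a direct computation collapses $h_1$ to
\[
h_1=(BD^q+B^qC)(BC^q+B^qD).
\]
Since $BC^q+B^qD\neq0$, you get $BD^q+B^qC=0$. Hence the entire degree-one part of $g_1$ vanishes, and your formula $g_1^{(L)}=(BD^q+B^qC)Z_0$ is the zero polynomial; $g_1^{(L)}$ jumps to degree two, and the ``non-square'' argument you give for $g_1^{(L)}g_2^{(L)}$ does not apply. So Step~1 does not rule out $A^qB+B^q=0$. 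The paper obtains $A^qB+B^q\neq0$ by a different route: it first derives $C^q=A^qB+A^qD+B^q$ from the factorization constraint (see below) and only then observes that $A^qB+B^q=0$ would force $C=AD^q$, $A^{q+1}=1$, hence $(C1)$ or $(C2)$, contrary to hypothesis.

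The overall architecture also diverges from the paper. Rather than pushing the $g_i^{(L)}$ analysis through higher homogeneous degrees, the paper first \emph{solves $h_1=0$ for $D^q$} as a rational expression in $A,B,C,D$, substitutes into $a_0,a_1,a_2$, and clears denominators to obtain explicit polynomials $b_0,b_1,b_2$ with $b_0^{(i)}=0$ for $i\notin\{4,5,6,7\}$ and $b_2^{(i)}=0$ for $i\notin\{4,6\}$. From the five resulting homogeneous equations one eliminates $\gamma^{(3)},\gamma^{(4)}$ to obtain a single polynomial identity; the crucial step is reading off the coefficient of $Z_0^2Z_1^{22}$, which equals
\[
B^{8}(BC^q+B^qD)^{8}(A^qE+E^q)^{6}(A^qB+A^qD+B^q+C^q)^{2}.
\]
Its vanishing gives the clean dichotomy $A^qE+E^q=0$ \emph{or} $C^q=A^qB+A^qD+B^q$. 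The first alternative with $E\neq0$ is then chased down (via two further coefficient checks, at $Z_0^4Z_1^{20}$ and $Z_0^{24}$) to a contradiction with $(C1)/(C2)$, leaving $E=0$. The second alternative is combined with $h_1=0$ and its conjugate to factor as $(AB^q+B)(A^qB+B^q)(B^{q+1}+D^{q+1}+BD^q+B^qD+1)=0$, yielding condition~(1). Your Steps~2--3 aim at the same endpoint but do not isolate a single decisive coefficient; the ``should collapse to'' in your sketch hides exactly the computation that makes the proof work, and without the $D^q$-substitution the bookkeeping is noticeably heavier.
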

\begin{proof}
Recall that if $f_{A,B,C,D,E}(x)$ is APN then   $a_2X_0^2+a_1X_0+a_0$ possesses a degree-one factor in $X_0$.
By $h_1=0$ one gets 
$$D^q = \frac{A^{q+1}B^{q+1} + A B^{2q} + A^q B^2 + B^{q+1}C^{q+1} + B^{q+1} + B^{2q}CD}
    {B(BC^q + B^qD)}.$$ After substituting it into \eqref{FactorizationS} and raising the denominator, such a factorization reads
\begin{eqnarray*}
    X_0(X_0+Z_0)\Big ((A^{q+1}B^{q+1} + A B^{2q}+A^q B^2 +B^{q+1}C^{q+1}+B^{q+1}+B^{2q}CD)Z_0\\
    + (A^q B^2 C^q  + A^q B^{q+1}D )Z_1 + (B^2 C^q E^q +  B^{q+1}D E^q)Z_0^2  
        \Big)(b_2X_0^2+b_1X_0+b_0),
\end{eqnarray*}
where
{\small
\allowdisplaybreaks
\begin{align*}
    b_2&:=\Big( (AB^2C^qE^q + AB^{q+1}DE^q + B^2C^qE + B^{q+1}DE)Z_0^3\\&+( B^2C^{2q}E + B^2C^qDE^q + B^{q+1}C^qDE + B^{q+1}D^2E^q)Z_0^2Z_1 \\&+ (A^{q+2}B^{q+1} + A^2B^{2q} + A^{q+1}B^2 + AB^{q+1}C^{q+1} + AB^{q+1} + AB^{2q}CD + B^2C^{q+1} + B^{q+1}CD)Z_0^2 \\&+ (A^{q+1}B^{q+1}E + AB^{2q}E + A^qB^2E + B^2C^{q+1}E^q + B^{q+1}C^{q+1}E + B^{q+1}CDE^q + B^{q+1}E + B^{2q}CDE)Z_0Z_1^2\\&+( A^{q+1}B^2C^q + AB^{2q}D + A^qB^2D + B^2C^{1+2q} + B^2C^q + B^{2q}CD^2)Z_0Z_1\\&+(A^qB^2C^qE + A^qB^{q+1}DE + B^2C^qE^q + B^{q+1}DE^q)Z_1^3 \\&+(A^qB^2C^qD + A^qB^{q+1}D^2 + B^2C^{2q} + B^{q+1}C^qD)Z_1^2 \Big);\\
    b_1&:= b_2Z_0;\\
    b_0&:= \Big((A^qB+B^q)Z_1^2 + (BC^q+B^qD)Z_0^2  \Big)\cdot \Big( B(AB^{q} +B)Z_0 + B(BC^q  + B^{q}D)Z_1 +B(AC^q +D)Z_0^2\\ &+(AB^q+B)Z_1^2 + C(BC^{q} +B^qD)Z_1^2\Big)\cdot
\Big((A^{q+1}B^{q+2}+AB^{1+2q}+A^qB^3+ B^{2+q})Z_0 \\ & + (A^qB^3C^q+A^qB^{2+q}D+ B^{2+q}C^q+ B^{1+2q}D)Z_1 + (A^{q+1}B^{q+1}D+AB^{2q}D+A^qB^2D+B^3C^qE^q\\&+B^{2+q}C^qE+B^{2+q}DE^q+ B^2C^{1+2q}+B^{1+2q}DE+B^{q+1}D+B^{2q}CD^2)Z_0^2 + (A^{q+1}B^{q+1}+AB^{2q}\\&+A^qB^2C^{q+1}+A^qB^2+A^qB^{q+1}CD+B^{q+1}C^{q+1}+B^{q+1}+ B^{2q}CD)Z_1^2 + (A^qB^2C^qD+A^qB^{q+1}D^2\\&+B^2C^{2q}+B^{q+1}C^qD)Z_0Z_1 + (A^qB^2C^qE+A^qB^{q+1}DE+B^2C^qE^q+B^{q+1}DE^q)Z_0Z_1^2\\&+(B^2C^{2q}E + B^2C^qDE^q +  
         B^{q+1}C^qDE +  B^{q+1}D^2E^q)Z_0^3   \Big).
\end{align*}
}

By Remark \ref{Remark}, if $b_2X_0^2+b_1X_0+b_0$ has a non trivial factor in $X_0$ then without loss of generality 
$$b_2X_0^2+b_1X_0+b_0=(\beta X_0+\gamma)(\beta(X_0+Z_0)+\gamma),$$
for some $\beta,\gamma \in \overline{\mathbb{F}}_q[Z_0,Z_1]$. We get $\beta^2=b_2$ and $b_0=\gamma(\gamma+ \beta Z_0)$. 
Now, $b_0^{(i)}=0$ if $i\notin \{4,5,6,7\}$ and $b_2^{(i)}=0$ if $i\notin \{4,6\}$. Thus 
$\beta =\beta^{(2)}+\beta^{(3)}, \quad \gamma =\gamma^{(2)}+\gamma^{(3)}+\gamma^{(4)}.$
It follows that
\allowdisplaybreaks
\begin{align*}
0&=\gamma^{(4)}(\gamma^{(4)}+Z_0\beta^{(3)}),\\
b_0^{(7)}&=\gamma^{(3)}Z_0\beta^{(3)}+\gamma^{(4)}Z_0\beta^{(2)},\\
b_0^{(6)}&=\gamma^{(2)}Z_0\beta^{(3)}+(\gamma^{(3)})^2+\gamma^{(3)}Z_0\beta^{(2)},\\
b_0^{(5)}&=Z_0\gamma^{(2)}\beta^{(2)},\\
b_0^{(4)}&=(\gamma^{(2)})^2.
\end{align*}
Combining the first three we get 
$$b_0^{(6)}=\beta^{(2)}Z_0\beta^{(3)}+\frac{(b_0^{(7)})^2}{Z_0^2(\beta^{(3)})^2}+\frac{b_0^{(7)}\beta^{(2)}}{\beta^{(3)}},$$
or equivalently  
$$(b_0^{(6)})^2+b_2^{(4)}Z_0^2b_2^{(6)}+\frac{(b_0^{(7)})^4}{Z_0^4(b_2^{(6)})^2}+\frac{(b_0^{(7)})^2b_2^{(4)}}{b_2^{(6)}}\equiv0.$$
After raising the denominator, the coefficient of $Z_0^2Z_1^{22}$ in the above polynomial equation is 
$$B^8(BC^q + B^q D)^8(A^q E + E^q)^6(A^q B + A^q D + B^q + C^q)^2.$$

By assumption, $BC^q + B^q D\neq 0$. 
\begin{itemize}
\item Suppose that $A^q E = E^q$ and $E\neq 0$. Then $A^{q+1}=1$. Combining this condition with the coefficient of $Z_0^4Z_1^{20}$, we obtain 
$$ E^4(A^q B + B^q)^8  (A B^q + B C^{q+1}+ B + B^q C D)^8.$$
\begin{itemize}
    \item If $A^q B + B^q=0$.  Looking at the coefficient of $Z_0^{24}$, by $A^q B + B^q=0=1+A^{q+1}=A^q E + E^q$, we get $AC^q + D=0$. This is a contradiction to our hypothesis since the coefficient of $X_1$ in $G(X_0,X_1,Z_0,Z_1)$ vanishes. 
    
    \item If  $A B^q + B C^{q+1}+ B + B^q C D=0$ and $A^q B + B^q\neq 0$, then $h_1=0$ yields $B^{q+1}(B C^q + B^qD)(A^q C + D^q)=0$ and thus $A^q C + D^q=0=AC^q+D$. From $A B^q + B C^{q+1}+ B + B^q C D=0$ we get $(C^{q+1} + 1)(A B^q + B)=0$. So, $C^{q+1} + 1=0$ and, from  $A B^q + B C^{q+1}+ B + B^q C D=0$, $A=CD$. This is a contradiction to our hypothesis since the coefficient of $X_1$ in $G(X_0,X_1,Z_0,Z_1)$ vanishes. 
\end{itemize}
\item Suppose now that $C^q=A^q B + A^q D + B^q$. Combining it with $h_1=0$ and with $C=A B^q + A D^q + B$ we obtain that 

$$(AB^q + B)(A^q B + B^q)(B^{q+1} +D^{q+1}+ BD^q + B^q D  + 1)=0$$
If $A^q B + B^q=0$, since $B\neq 0$, 
$$A^{q+1}=1, \quad C^q=A^q D=D/A, \quad C=A D^q$$
and $G(X_0, X_1, Z_0, Z_1)$ vanishes and thus condition $(C1)$ or $(C2)$ holds, a contradiction.
\end{itemize}
The proof is shown.
\end{proof}

\begin{prop}
\label{C7_2}
Suppose that conditions $(C1)$ and $(C2)$ do not hold. Suppose that $h_1=0$ and 
$$BC^q + B^q D= 0.$$
 Then $B=B^qA$, $BE^q + B^qE\neq 0$.  Also  if $f_{A,B,C,D,E}(x)$ is APN then  $B^q T^3+ B^qC T^2 +BC^q T + B $ has no roots in $\mathbb{F}_{q^2}$.
\end{prop}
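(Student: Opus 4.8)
The plan is to first force the algebraic relations on the coefficients, and then, for the conditional assertion, to produce an explicit $\phi$-fixed plane inside $\mathcal{W}$. I begin with $B=B^qA$. Using $BC^q+B^qD=0$ I write $D=B^{1-q}C^q$ (so also $BD^q+B^qC=0$, its $q$-conjugate) and substitute into $h_1$. The four monomials $B^2C^qD^q,\ B^{q+1}C^{q+1},\ B^{q+1}D^{q+1},\ B^{2q}CD$ each reduce to $B^{q+1}C^{q+1}$ and cancel in characteristic $2$, leaving $h_1=A^{q+1}B^{q+1}+AB^{2q}+A^qB^2+B^{q+1}=(A^qB+B^q)(AB^q+B)$. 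Since the two factors are interchanged by the $q$-power map, $h_1=0$ forces $AB^q+B=0$, that is $B=AB^q=B^qA$; hence $B^q=A^qB$, $A^{q+1}=1$, $B^{1-q}=A$, and therefore $D=AC^q$. For the inequality I argue by contradiction: if $BE^q+B^qE=B(E^q+A^qE)=0$ then $E^q=A^qE$, and together with $A^{q+1}=1$, $B^q=A^qB$, $D=AC^q$ this is exactly Condition $(C2)$ when $C\neq0$ (then $ACD\neq0$) and exactly Condition $(C1)$ when $C=0$ (then $D=0$); both are excluded. Thus $BE^q+B^qE\neq0$, which in particular forces $E\neq0$ and $\mu:=AE^q+E\neq0$.

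Next I set up the geometry for the conditional claim. Under the relations just obtained, every coefficient of the quartic $g_2$ vanishes, so $a_0=g_1g_2\equiv0$ and the eliminated equation collapses to $\overline{G}=(\text{nonvanishing})\cdot g_3^2X_0^2(X_0+Z_0)^2$ with $g_3=\mu\bigl(Z_0^3+C^qZ_0^2Z_1+A^qCZ_0Z_1^2+A^qZ_1^3\bigr)$. Because this is a perfect square supported on the forbidden hyperplanes together with the cubic $g_3$, the degree-one-factor method of the previous propositions degenerates and yields no information; the interesting locus of $\mathcal{W}$ sits over $g_3=0$. Using $AB^q=B$ one checks that the cubic in the statement equals $B^q\,Q(T)$ with $Q(T):=T^3+CT^2+AC^qT+A$, so its roots coincide with those of $Q$ (the dehomogenization of $g_3$).

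I then prove the contrapositive: if $Q$ has a root in $\mathbb{F}_{q^2}$ then $f_{A,B,C,D,E}$ is not APN. A single root $k$ only produces the plane $\mathcal{P}_k:=\{Z_1=kZ_0,\ X_1=kX_0\}$, and since $\phi(\mathcal{P}_k)=\mathcal{P}_{k^{-q}}$ this plane is $\phi$-fixed only when $k^{q+1}=1$; so the crux is to find such a root. The identity $k^{3q}Q(k^{-q})=Q(k)^q$ (which uses $A^{q+1}=1$) shows that $\sigma:k\mapsto k^{-q}$ permutes the roots of $Q$ with multiplicities, and on $\mathbb{F}_{q^2}$ it is an involution. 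As $Q$ already has a root in $\mathbb{F}_{q^2}$, the complementary quadratic factor either stays irreducible over $\mathbb{F}_{q^2}$ or splits, so the number of $\mathbb{F}_{q^2}$-roots counted with multiplicity is $1$ or $3$, hence odd; an involution on a multiset of odd total multiplicity has a fixed point $k_0$, giving $k_0^{q+1}=1$. Substituting $Z_1=k_0Z_0$, $X_1=k_0X_0$ into $F_1$ and $F_2$, each coefficient becomes a scalar multiple of $Q(k_0)=0$ (the coefficient of $Z_0^2X_0^2$ is $E(k_0^2+k_0^2)=0$, which is where $E\neq0$ pins the lift to $X_1=k_0X_0$), so $\mathcal{P}_{k_0}\subseteq\mathcal{W}$. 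This plane is linear, hence absolutely irreducible; it is $\phi$-fixed because $k_0^{q+1}=1$; and it avoids every forbidden hyperplane since $k_0\neq0$ (note $Q(0)=A\neq0$). By Theorem~\ref{Th:Key}, $f_{A,B,C,D,E}$ is not APN for $q$ large, contradicting the APN hypothesis and completing the proof.

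The step I expect to be the main obstacle is the passage from ``$Q$ has a root in $\mathbb{F}_{q^2}$'' to ``$Q$ has a root $k_0$ with $k_0^{q+1}=1$'': only the latter makes $\mathcal{P}_{k_0}$ $\phi$-fixed and hence usable in Theorem~\ref{Th:Key}, and establishing it cleanly rests on the Frobenius-involution identity $k^{3q}Q(k^{-q})=Q(k)^q$ together with the parity count of $\mathbb{F}_{q^2}$-roots. The degeneration $g_2\equiv0$, which kills the machinery of the earlier cases, is what makes a direct construction of this plane necessary.
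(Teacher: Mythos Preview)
Your proof is correct and follows essentially the same strategy as the paper: derive $B=AB^q$ (hence $A^{q+1}=1$, $D=AC^q$) from $h_1=0$ and $BC^q+B^qD=0$; rule out $BE^q+B^qE=0$ via $(C1)$/$(C2)$; then, for the contrapositive, use the $\phi$-invariance of the cubic to locate a root $k_0$ with $k_0^{q+1}=1$ and exhibit the plane $Z_1=k_0Z_0$, $X_1=k_0X_0$ inside $\mathcal{W}$. Your treatment is in fact more explicit than the paper's at several points (the factorization $h_1=(A^qB+B^q)(AB^q+B)$, the observation $g_2\equiv 0$, the parity/involution argument, and the verification that $F_1,F_2$ vanish on the plane).

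One small correction: the identity you invoke should read $k^{3q}Q(k^{-q})=A\cdot Q(k)^q$, not $k^{3q}Q(k^{-q})=Q(k)^q$. Comparing the $k^{3q}$-coefficients gives $A$ on one side and $1$ on the other, and $A^{q+1}=1$ does not force $A=1$. This does not affect your argument, since you only use the identity to conclude that $\sigma\colon k\mapsto k^{-q}$ permutes the roots of $Q$, and the nonzero scalar $A$ is irrelevant for that. The paper reaches the same conclusion directly from the $\phi$-invariance of the homogeneous cubic $BC^qZ_0^2Z_1+BZ_0^3+B^qCZ_0Z_1^2+B^qZ_1^3$, which is equivalent to your involution statement once one tracks the action of $\phi$ on linear factors.
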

\begin{proof}
Since $h_1=0$ and $BC^q + B^q D=0$, we obtain $A = B^{1-q}$ and the coefficient of $X_1$ in $G(X_0,X_1,Z_0,Z_1)$ is 
$$(B E^q + B^q E)Z_0^2(B^{q+1}Z_0 + B C^q Z_0^2 + B^qZ_1^2)^2.$$
If $B E^q + B^q E\neq 0$ then the above coefficient is not vanishing. Also,  $\overline{G}(X_0,Z_0,Z_1)$ reads
$$(B E^q + B^q E)^2(B E^q Z_0^2 + B^qCZ_0 + B^q Z_1)(B C^q Z_0^2 Z_1 + B Z_0^3 + B^q C Z_0 Z_1^2 + B^q Z_1^3)^2X_0^2(X_0+Z_0)^2=0.$$

Suppose that there exists $k\in \mathbb{F}_{q^2}$ such that $Z_1+kZ_0$ is fixed by $\phi$ (i.e. $k^{q+1}=1$) and it is factor of  $BC^q Z_0^2 Z_1 + B Z_0^3 + B^q C Z_0 Z_1^2 + B^q Z_1^3$. In other words
$$k^3 B^q + k^2 B^q C + kB C^q + B=0. $$
Then, by direct checking $Z_1+kZ_0=0=X_1+k X_0$ is a plane fixed by $\phi$, contained in $\mathcal{W}$
but not in any forbidden hyperplane. Thus, by Theorem~\ref{Th:Key}, $f_{A,B,C,D,E}(x)$ is not APN. Note that since $BC^q Z_0^2 Z_1 + B Z_0^3 + B^q C Z_0 Z_1^2 + B^q Z_1^3$ is fixed by $\phi$, the existence of at least one factor in $\mathbb{F}_{q^2}$ yields that either it is the unique one with this property and then it is fixed by $\phi$, or all the three factors are defined over $\mathbb{F}_{q^2}$ and at least one of them is fixed by $\phi$. 
\end{proof}


\section{\texorpdfstring{$B \ne 0$}{B ne 0} and (C3) does not hold}


When $(A D^q+C,A^{q+1}+C^{q+1}+D^{q+1}+1)= (0,0)$ and $B\neq 0$,  from $A D^q+C=A^{q+1}+C^{q+1}+D^{q+1}+1=0$, either $A^{q+1}=1$ or $D^{q+1}=1$ holds.

\begin{prop}\label{C6_1}
  Suppose that   $(A D^q+C,A^{q+1}+C^{q+1}+D^{q+1}+1)= (0,0)$, and   $(A B^q + B)(A E^q + E)\neq 0$.
  Let $q$ be large enough. If $f_{A,B,C,D,E}(x)$ is  APN then $D^{q+1}=1$, $A^{q+1} \ne 1$, $BE^q = B^q E$, and $(AE^q+E)^{1-q}=D\sqrt{D}$.
\end{prop}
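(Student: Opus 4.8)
The plan is to feed the factorization~\eqref{FactorizationS} into Remark~\ref{Remark}, organizing the argument around the dichotomy forced by the hypotheses. Since $AD^q+C=0$ gives $C=AD^q$, substituting this into $g_3$ makes its $Z_0^2$, $Z_1^2$ and $Z_0Z_1$ coefficients equal to $AD^q+C=0$, $A^qD+C^q=0$ and $A^{q+1}+C^{q+1}+D^{q+1}+1=(A^{q+1}+1)(D^{q+1}+1)=0$ (the last vanishing because (C6) fails). Hence $g_3$ is the homogeneous cubic
$$g_3=uZ_0^3+Du^qZ_0^2Z_1+D^quZ_0Z_1^2+u^qZ_1^3,\qquad u:=AE^q+E,$$
which is nonzero since $u\neq0$; in particular $a_2=g_3^2\neq0$, so $a_2X_0^2+a_1X_0+a_0$ has degree $2$ in $X_0$. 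If $f_{A,B,C,D,E}$ is APN, then by Theorem~\ref{Th:Key} and the criterion stated before Remark~\ref{Remark} this quadratic must possess a degree-one factor in $X_0$, so by Remark~\ref{Remark} we may write $a_2X_0^2+a_1X_0+a_0=(\beta X_0+\gamma)(\beta(X_0+Z_0)+\gamma)$ with $\beta=g_3$ and $\gamma(\gamma+g_3Z_0)=a_0$. As $g_3$ is homogeneous of degree $3$, $a_0=g_1g_2$ has homogeneous parts only in degrees $4$ through $7$, and $\gamma^2$ carries only even-degree parts in characteristic $2$, one is forced to take $\gamma=\gamma^{(2)}+\gamma^{(3)}$; comparing homogeneous parts yields $(\gamma^{(2)})^2=a_0^{(4)}$, $0=a_0^{(5)}$, $(\gamma^{(3)})^2+\gamma^{(2)}g_3Z_0=a_0^{(6)}$ and $\gamma^{(3)}g_3Z_0=a_0^{(7)}$.

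I would then treat the generic case $A^{q+1}\neq1$, where the failure of (C6) forces $D^{q+1}=1$. Writing $w:=AB^q+B$ (so that $A^qB+B^q=w^q$), the key computational fact is that $D^{q+1}=1$ makes $g_3$ and the relevant parts of $g_1,g_2$ factor through the linear form $Z_0+\sqrt{D}^{\,q}Z_1$; explicitly $g_3=(Z_0+\sqrt{D}^{\,q}Z_1)^2(uZ_0+Du^qZ_1)$, with analogous expressions for $g_2^{(3)}$, $g_1^{(3)}$ and $g_2^{(4)}=(A^{q+1}+1)D(Z_0+\sqrt{D}^{\,q}Z_1)^4$. The equation $a_0^{(5)}=0$ then collapses to $BE^q+B^qE$ times a nonzero polynomial, forcing the first conclusion $BE^q=B^qE$. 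The equation $\gamma^{(3)}g_3Z_0=a_0^{(7)}=g_1^{(3)}g_2^{(4)}$ requires $\gamma^{(3)}=a_0^{(7)}/(g_3Z_0)$ to be a polynomial; since $a_0^{(7)}$ is, up to the nonzero factor $D^2u^q(A^{q+1}+1)Z_0$, a power of $Z_0+\sqrt{D}^{\,q}Z_1$, this forces the remaining linear factor $uZ_0+Du^qZ_1$ of $g_3$ to be proportional to $Z_0+\sqrt{D}^{\,q}Z_1$, i.e. $Du^{q-1}=\sqrt{D}^{\,q}$, which after simplification with $D^{q+1}=1$ becomes exactly $(AE^q+E)^{1-q}=D\sqrt{D}$. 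Together with $A^{q+1}\neq1$ and $D^{q+1}=1$ this proves all four conclusions in this case.

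It remains to rule out $A^{q+1}=1$, i.e. to show that there $f_{A,B,C,D,E}$ is never APN, so that the APN hypothesis indeed forces $A^{q+1}\neq1$ and hence (by the dichotomy) $D^{q+1}=1$. When $A^{q+1}=1$ one has $g_2^{(4)}=(A^{q+1}+1)(DZ_0^4+D^qZ_1^4)=0$, so $a_0^{(7)}=0$ and the matching forces $\gamma^{(3)}=0$, leaving $(\gamma^{(2)})^2=a_0^{(4)}=g_1^{(1)}g_2^{(3)}$. If moreover $D^{q+1}\neq1$, evaluating the cubic $g_2^{(3)}$ at the root of the linear form $g_1^{(1)}=D^qwZ_0+w^qZ_1$ returns $Z_0^3\,w(D^{q+1}+1)\neq0$, so $g_1^{(1)}\nmid g_2^{(3)}$ and $a_0^{(4)}$ is not a square; no $\gamma^{(2)}$ exists, the quadratic is absolutely irreducible, and $f_{A,B,C,D,E}$ is not APN. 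If instead $D^{q+1}=1$ (whence $CD=AD^{q+1}=A$), I would abandon the quadratic and exhibit directly the plane $\mathcal{P}\colon Z_1=\sqrt{D}Z_0,\ X_1=\sqrt{D}X_0$: substituting $Z_1^2=DZ_0^2$ and $X_1^2=DX_0^2$ into $F_1$ collapses both its $X_0^2$- and $X_0$-coefficients to multiples of $A+CD=0$, so $\mathcal{P}\subset\mathcal{W}$; it is fixed by $\phi$ because $(\sqrt{D})^{q+1}=1$, it is absolutely irreducible (being a plane), and it lies in none of the forbidden hyperplanes, so Theorem~\ref{Th:Key} again gives that $f_{A,B,C,D,E}$ is not APN.

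The main obstacle is precisely the case $A^{q+1}=1$. The natural hope is to run a single uniform argument through the quadratic $a_2X_0^2+a_1X_0+a_0$, but when $A^{q+1}=D^{q+1}=1$ the matching system can still be solved (the quadratic genuinely splits), so absolute irreducibility fails and no contradiction is available on that route; one is forced to produce a forbidden-escaping component by hand, which is the planar component $\mathcal{P}$ above. A secondary, purely technical burden is the square-root-of-$D$ bookkeeping: establishing the clean factorizations of $g_3,g_2^{(3)},g_1^{(3)},g_2^{(4)}$ through $Z_0+\sqrt{D}^{\,q}Z_1$, repeatedly invoking $D^{q+1}=1$ and $(\sqrt{D})^{q+1}=1$, is exactly what reduces the degree-$5$ and degree-$7$ equations to the two scalar identities $BE^q=B^qE$ and $(AE^q+E)^{1-q}=D\sqrt{D}$.
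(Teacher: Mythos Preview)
Your proof is correct, and the overall architecture---reducing the APN hypothesis to the existence of a degree-one factor in $X_0$, then matching homogeneous parts of $\gamma(\gamma+g_3Z_0)=a_0$---mirrors the paper's philosophy but follows a genuinely different execution.

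The paper does not work with the unreduced quadratic $a_2X_0^2+a_1X_0+a_0$ throughout. Instead, it refactors $\overline{G}$ separately in each of the three sub-cases (whether $A^{q+1}=1$, $D^{q+1}=1$, or both), each time pulling out additional common factors to obtain a reduced quadratic $b_2X_0^2+b_1X_0+b_0$ (respectively $c_2$, $d_2$). The payoff is that in the two sub-cases with $A^{q+1}=1$, the reduced leading coefficient $b_2$ (respectively $d_2$) has odd degree $3$ in $Z_0,Z_1$, hence cannot be a perfect square, which kills the factorization instantly---no homogeneous matching required. In the middle case $D^{q+1}=1$, $A^{q+1}\neq1$, the paper's reduced $c_2$ is degree $4$ and even, and the paper runs a homogeneous-part analysis structurally identical to yours (though with $\gamma=\gamma^{(1)}+\gamma^{(2)}$ rather than $\gamma^{(2)}+\gamma^{(3)}$, because of the lower degrees after reduction).

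Your route---keeping $a_2=g_3^2$ (always a square) and pushing the obstruction down to $a_0^{(4)}$ not being a square when $A^{q+1}=1$, $D^{q+1}\neq1$---is a nice uniform alternative; and your direct exhibition of the $\phi$-fixed plane $Z_1=\sqrt{D}Z_0$, $X_1=\sqrt{D}X_0$ in the case $A^{q+1}=D^{q+1}=1$ is a legitimate substitute for the paper's odd-degree trick (and arguably more transparent, since it names the component explicitly). The cost is that you carry slightly higher-degree polynomials and need the external planar construction, whereas the paper's refactoring makes two of the three cases one-liners; the benefit is that you never have to recompute the factorization of $\overline{G}$.
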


\begin{proof}
Since $f_{A,B,C,D,E}(x)$ is assumed to be APN and $q$ large enough, $a_2X_0^2+a_1 X_0+a_0 =0$ must have a component of degree $1$ in $X_0$.
If $D^{q+1}\neq 1$ then $A^{q+1}=1$. With these assumptions the factorization in \eqref{FactorizationS} is 
$$X_0(X_0 + Z_0)(A D^q Z_0 + A E^q Z_0^2 + Z_1)(A D^q Z_0 Z_1^2 + A Z_0^3 + D Z_0^2 Z_1 + Z_1^3)(b_2X_0^2+b_1 X_0+b_0),$$
where 
\allowdisplaybreaks
\begin{align*}
    b_2:=& (AE^q + E)^2(A D^q Z_0 Z_1^2 + A Z_0^3 + D Z_0^2 Z_1 + Z_1^3);\\
     b_1:=& b_2Z_0;\\  
     b_0 :=&(A B^q + B)\Big((A B^q +  B)AD^qZ_0 +(A B^q +B)Z_1+ (A B E^q + A B^q E)Z_0^2\\
     &+(A E^q +  E)Z_0 Z_1^2+   (D E + A D E^q)Z_0^3  \Big).
\end{align*}
By Remark \ref{Remark}, if $b_2X_0^2+b_1X_0+b_0$ splits into two factors of degree one in $X_0$, then
$$
b_2X_0^2+b_1X_0+b_0=(\beta X_0 + \gamma)(\beta (X_0+Z_0)+\gamma),
$$
where $\beta, \gamma \in \overline{\mathbb{F}}_q[Z_0,Z_1]$, $b_2=\beta^2$ and $b_0=\gamma (\gamma + \beta Z_0)$. Since $\deg(b_2)=3$ is odd, $b_2$ cannot be a perfect square, yielding a contradiction.

Thus we can assume that $D^{q+1}=1$. With these assumptions the factorization in \eqref{FactorizationS} is 
$$X_0(X_0 + Z_0)(D Z_0^2 + Z_1^2)(A^qD Z_1 + D E^q Z_0^2 + Z_0)(c_2X_0^2+c_1 X_0+c_0),$$ where
\allowdisplaybreaks
\begin{align*}
    c_2:=&(\sqrt{D} Z_0 + Z_1)^2((A E^q+E)Z_0 + (A^q D E+DE^q)Z_1)^2;\\
    c_1:=&c_2Z_0;\\
    c_0:=&\Big(D(A^{q+1}+1) Z_0^2 + (A^{q+1}+1) Z_1^2 + (A B^q+B) Z_0 + D(A^q B +B^q) Z_1   \Big)\cdot\\
    &\Big(D(A^{q+1}+1+B E^q+B^qE) Z_0^2 + (A^{q+1}+1) Z_1^2 + (A B^q+B) Z_0\\
    &+ D(A^q B  +B^q)Z_1 + (A^q D^2 E +D^2 E^q)Z_0^3 + D(A^q  E +E^q)Z_0 Z_1^2 \Big). 
\end{align*}

If $c_2X_0^2+c_1X_0+c_0$ splits into two factors of degree one in $X_0$, then
$$
c_2X_0^2+c_1X_0+c_0=(\beta X_0+\gamma)(\beta (X_0+Z_0)+\gamma),
$$
where $\beta, \gamma \in \overline{\mathbb{F}_q}[Z_0,Z_1]$, $c_2=\beta^2$ and $c_0=\gamma(\gamma + \beta Z_0)$. Note that $c_0=c_0^{(2)}+c_0^{(3)}+c_0^{(4)}+c_0^{(5)}$, and thus $\gamma=\gamma^{(1)}+\gamma^{(2)}$. Since 
$$
c_0^{(2)}=\Big((A B^q+B) Z_0 + D(A^q B  + B^q)Z_1\Big)^2,
$$
then $\gamma^{(1)}=(A B^q+B) Z_0 + D(A^q B  + B^q)Z_1$.

The two factors of $c_2X_0^2+c_1X_0+c_0$ are
\begin{align*}
   F_1=&(\sqrt{D} Z_0 + Z_1)\Big((A E^q+E)Z_0 + D(A^q  E+E^q)Z_1\Big)(X_0+Z_0)\\
   &+\gamma^{(2)}+ (A B^q+B) Z_0 + D(A^q B  + B^q)Z_1;\\
   F_2=&(\sqrt{D} Z_0 + Z_1)\Big((A E^q+E)Z_0 + D(A^q  E+E^q)Z_1\Big)X_0\\
   &+\gamma^{(2)}+ (A B^q+B) Z_0 + D(A^q B  + B^q)Z_1.
\end{align*}

Now, the homogeneous part of degree $3$ in $F_1F_2$ vanishes and thus, comparing it with the one in $c_2X_0^2+c_1X_0+c_0$, 
$$D (B E^q + B^qE)Z_0^2\Big((A B^q+B) Z_0 + D(A^q B +B^q) Z_1\Big)\equiv0,$$
and thus  $B E^q + B^qE=0$ (recall that $E\neq 0$ since $A E^q + E\neq 0$). 
Suppose that $A^{q+1}\neq 1$. Then
\begin{eqnarray*}
   (A B^q+B) Z_0 + D(A^q B +B^q)Z_1 &=&(A BE^{q-1}+B) Z_0 + D(A^q B +B E^{q-1})Z_1\\
   &=&B\Big((A E^{q-1}+1) Z_0 + D(A^q + E^{q-1})Z_1\Big). 
\end{eqnarray*}

Since the homogeneous part of degree $5$ in  $c_2X_0^2+c_1X_0+c_0$ is 
$$(\sqrt{D} Z_0 + Z_1)\Big((A E^q+E)Z_0 + D(A^q  E+E^q)Z_1\Big)\gamma^{(2)}Z_0=D(A^q E + E^q)(A^{q+1} + 1)(\sqrt{D} Z_0 + Z_1)^4Z_0,$$

$$\gamma^{(2)}=\lambda (\sqrt{D} Z_0 + Z_1)^2, \qquad (A E^q+E)Z_0 + D(A^q  E+E^q)Z_1=\mu (\sqrt{D} Z_0 + Z_1),$$
where $\lambda,\mu \in \overline{\mathbb{F}}_q^*$. Therefore, we have the condition $AE^q+E=D\sqrt{D}(A^qE+E^q)$.

Consider now $D^{q+1}=A^{q+1}=1$. With these assumptions the factorization in \eqref{FactorizationS} is 
$$
X_0(X_0+Z_0)(\sqrt{D}Z_0 + Z_1)^2(AZ_0 + DZ_1)(ADE^qZ_0^2 + AZ_0 + DZ_1)(d_2X_0^2+d_1X_0+d_0)=0,
$$
where
\allowdisplaybreaks
\begin{align*}
    d_2:=& (AE^q + E)(\sqrt{D}Z_0 + Z_1)^2(AZ_0 + DZ_1);\\
    d_1:=&d_2Z_0;\\
    d_0:=& (AB^q+B) \Big( A(AB^q+ B)Z_0 + D(AB^q +B)Z_1 + AD(BE^q  + B^qE)Z_0^2\\ &+ D(AE^q+E)Z_0Z_1^2 + D^2(AE^q
       + E)Z_0^3  \Big).
\end{align*}
Arguing as in the previous cases, we get a contradiction since $\deg(d_2)=3$ and $\gcd(d_2,d_0)=1$.
\end{proof}

\begin{prop}\label{C6_2}
  Suppose that    $(A D^q+C,A^{q+1}+C^{q+1}+D^{q+1}+1)= (0,0)$, and $A B^q + B\neq 0$, $A E^q + E= 0$.
  Let $q$ be large enough.

 Then $f_{A,B,C,D,E}$ is APN if, and only if, one of the following holds:
    \begin{enumerate}
        \item $A B^q + B\neq 0$, $A E^q + E= 0$, $C=AD^q, A^{q+1}=1, D \ne 0, D^{q+1} \ne 1$, and $T^3+AD^qT^2+DT+A$ has no roots in $\F_{q^2}$; 
        \item $A B^q + B\neq 0$, $A E^q + E= 0$,  $D=C=0$, $A^{q+1}=1$, $A \ne 1$, and $q \equiv 2 \pmod{3}.$
    \end{enumerate}
\end{prop}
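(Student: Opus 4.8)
The plan is to run everything through Theorem~\ref{Th:Key}: under the standing assumptions that (C1), (C2) fail and that $q$ is large, $f_{A,B,C,D,E}$ is APN precisely when $\mathcal{W}$ carries no absolutely irreducible component that is fixed by $\phi$ and avoids the six forbidden hyperplanes. First I would digest the hypothesis that (C6) fails. From $AD^q+C=0$ we get $C=AD^q$; substituting into $A^{q+1}+C^{q+1}+D^{q+1}+1=0$ and using $C^{q+1}=A^{q+1}D^{q+1}$ collapses that relation to $(A^{q+1}+1)(D^{q+1}+1)=0$. Combining this with $AE^q+E=0$ and $AB^q+B\neq 0$, I would show we are forced into $A^{q+1}=1$, and that $AB^q+B\neq 0$ is exactly what keeps us out of (C1), since $A^qB=B^q\Leftrightarrow AB^q+B=0$ once $A^{q+1}=1$. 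This cleanly separates the two regimes $D\neq 0,\ D^{q+1}\neq 1$ (leading to conclusion~(1)) and $C=D=0$ (leading to conclusion~(2)).

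Next I would write out the factorization~\eqref{FactorizationS} of $\overline{G}$. The factors $X_0$ and $X_0+Z_0$ lie in forbidden hyperplanes and are discarded. Using $AC^q=A(AD^q)^q=A^{q+1}D=D$, the general cubic $T^3+CT^2+AC^qT+A$ becomes precisely $T^3+AD^qT^2+DT+A$, so the remaining interesting part organizes (as in Part~(2) of Theorem~\ref{B0_1}) into a conic in $(Z_0,Z_1)$ and this binary cubic. The heart of the argument is then the following plane lemma, which I would verify by direct substitution into the two equations defining $\mathcal{W}$: a plane $\Pi_k:\ \{Z_1=kZ_0,\ X_1=kX_0\}$ lies on $\mathcal{W}$ if and only if $k$ is a root of the cubic (in the degenerate regime one computes $F_1|_{\Pi_k}=(A+k^3)\,Z_0X_0(X_0+Z_0)$, and analogously in general), it is fixed by $\phi$ if and only if $k^{q+1}=1$, and it avoids all forbidden hyperplanes whenever $k\neq 0,1$.

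I would then establish $\phi$-compatibility of the cubic: the substitution $k\mapsto 1/k^q$ permutes its roots (this is exactly the bookkeeping forced by $A^{q+1}=1$, matching the action of $\phi$), and on the set of $\mathbb{F}_{q^2}$-rational roots it is an involution, since its square is the $q^2$-power Frobenius. An involution acting on a nonempty set of roots of a cubic that has at least one $\mathbb{F}_{q^2}$-rational root must fix one of them, and that fixed $k$ satisfies $k^{q+1}=1$; thus a single $\mathbb{F}_{q^2}$-root produces a $\phi$-fixed, non-forbidden plane and $f$ is not APN. Conversely, if the cubic has no root in $\mathbb{F}_{q^2}$ it is irreducible over $\mathbb{F}_{q^2}$, the associated degree-$3$ surface is $\mathbb{F}_{q^2}$-irreducible but \emph{not} absolutely irreducible, and hence contributes no forbidden-hyperplane-avoiding absolutely irreducible $\phi$-fixed component; here one must also check that the conic contributes only trivial/forbidden points. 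This yields conclusion~(1): APN if and only if $T^3+AD^qT^2+DT+A$ has no root in $\mathbb{F}_{q^2}$.

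Finally I would specialize to $C=D=0$, where the cubic degenerates to $T^3=A$ with $A$ in the norm-one group $\mu_{q+1}$ and $A\neq 1$. Since $4^n\equiv 1\pmod 3$ we have $3\mid q^2-1$ always, so cubing is $3$-to-$1$ on $\mathbb{F}_{q^2}^\ast$, while $3\mid q+1\Leftrightarrow q\equiv 2\pmod 3$. Translating "no root of $T^3=A$ in $\mathbb{F}_{q^2}$" into the arithmetic of $\mu_{q+1}$, one sees that for $q\equiv 1\pmod 3$ every $A$ is a cube in $\mathbb{F}_{q^2}$ and exactly one cube root is $\phi$-fixed (the three values $(k_0\omega^j)^{q+1}$ exhaust $\mu_3$, since $\omega^{q+1}=\omega^2$), forcing non-APN, whereas $q\equiv 2\pmod 3$ is the divisibility condition under which a $\phi$-fixed cube root can fail to exist. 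I expect the main obstacle to be twofold: (i) rigorously excluding \emph{every other} component of $\mathcal{W}$ in the degenerate regime $C=D=0$, where the leading coefficient $a_2$ of~\eqref{FactorizationS} vanishes identically, so the elimination of $X_1$ must be redone by hand and the conic/cubic bookkeeping of the generic case does not transfer verbatim; and (ii) the precise cube-counting in $\mu_{q+1}$, which determines exactly when $T^3=A$ admits a $\phi$-fixed root and thereby ties the final condition to $q\bmod 3$ together with the position of $A$ inside $\mu_{q+1}$.
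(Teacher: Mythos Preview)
Your strategy---reduce to the cubic $T^3+AD^qT^2+DT+A$, use the involution $k\mapsto 1/k^q$ on its roots to locate a $\phi$-fixed plane $\Pi_k$, and handle $C=D=0$ via cube-counting in $\mu_{q+1}$---matches the paper's core argument. Two cases, however, are genuinely missing from your case split, and each requires its own work.

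First, your claim ``we are forced into $A^{q+1}=1$'' is not a consequence of the hypotheses. When $E=0$, the relation $AE^q+E=0$ is vacuous, and $(A^{q+1}+1)(D^{q+1}+1)=0$ allows $A^{q+1}\neq 1$ with $D^{q+1}=1$. This branch must be disposed of separately: the paper computes $G$ and $\overline{G}$ explicitly in this regime and finds a common factor
\[
H=(A^{q+1}+1)(DZ_0^2+Z_1^2)+(AB^q+B)Z_0+D(A^qB+B^q)Z_1,
\]
checks that $H$ is $\phi$-fixed (this uses $D^{q+1}=1$), and shows the resulting hypersurface is not contained in the forbidden hyperplanes, whence $f$ is not APN. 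Without this, your reduction to $A^{q+1}=1$ is unjustified.

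Second, even inside $A^{q+1}=1$ you jump directly to the dichotomy ``$D\neq 0,\ D^{q+1}\neq 1$'' versus ``$C=D=0$'', silently dropping $D\neq 0,\ D^{q+1}=1$. In that subcase the linear factor $AD^qZ_0+Z_1$ in $\overline{G}$ is itself $\phi$-fixed (since $A^{q+1}D^{q+1}=1$), giving a non-forbidden hyperplane component and forcing non-APN; your cubic analysis never sees this. Relatedly, under $A^{q+1}=1$ the extra factor in $\overline{G}$ is the \emph{square of a linear form}, not the conic you refer to; the ``conic'' bookkeeping from Theorem~\ref{B0_1} does not transfer here, and what you actually need is to check that $AD^qZ_0+Z_1$ is not $\phi$-fixed when $D^{q+1}\neq 1$.
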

\begin{proof}
By $AE^q+E=0$ we have either $E=0$ or $A^{q+1}=1$.
If $A^{q+1}=1$, then, after clearing the denominators
\begin{align*}
    G(X_0,X_1,Z_0,Z_1)&=
    (A B^q + B)(X_0 Z_1 + X_1 Z_0)(A D^q Z_0 + Z_1)\\
    \overline{G}(X_0,Z_0,Z_1)&=(A B^q + B)X_0(X_0+Z_0)(A D^q Z_0 + Z_1)^2(A D^q Z_0 Z_1^2 + A Z_0^3 + D Z_0^2 Z_1 + Z_1^3)
\end{align*}
and $A D^q Z_0 + Z_1$ is a common absolutely irreducible factor.

\begin{itemize}
    \item If $D^{q+1}=1$ then $A D^q Z_0 + Z_1$ is fixed by $\phi$ and $\mathcal{W}$ contains a hyperplane fixed by $\phi$ and different from the forbidden ones. By Theorem \ref{Th:Key}, if $q$ is large enough, $f_{A,B,C,D,E}(x)$ is not APN.
    \item If $D^{q+1}\neq 1$ and  $D \neq 0$ then $A D^q Z_0 + Z_1$ is not fixed by $\phi$. By direct computation $A D^q Z_0 Z_1^2 + A Z_0^3 + D Z_0^2 Z_1 + Z_1^3$ is fixed by $\phi$ and homogeneous of degree $3$ and  it splits into three different factors of degree 1. Let 
    $$A D^q Z_0 Z_1^2 + A Z_0^3 + D Z_0^2 Z_1 + Z_1^3=(Z_1+k_1Z_0)(Z_1+k_2Z_0)(Z_1+k_3Z_0),$$
for some $k_i\in \overline{\mathbb{F}_{q}}^*$ and $k_i $ are solutions of $ T^3+A D^q T^2+D T+A=0$.

\begin{enumerate}
    \item  If $k_1\in \mathbb{F}_{q^2}$, then either $\phi(Z_1+k_1Z_0)=Z_0+k_1^qZ_1$ is divisible by $Z_1+k_1Z_0$ and thus $Z_1+k_1Z_0$ is fixed by $\phi$ or $(Z_1+k_1Z_0)\mid \phi(\phi(Z_1+k_1Z_0))$. In the former case $Z_1+k_1Z_0$ defined a factor $\overline{G}$ fixed by $\phi$. In the latter case the third factor is defined over $\mathbb{F}_{q^2}$ and fixed by $\phi$. In both cases, there exists a variety in $\mathcal{W}$ defined by 
$$Z_1+kZ_0=0=X_1+kX_0,$$
for some $k\in \mathbb{F}_{q^2}$ fixed by $\phi$, absolutely irreducible, and not contained in any forbidden hyperplane. By Theorem \ref{Th:Key} $f_{A,B,C,D,E}(x)$ is not APN, if $q$ is large enough.
\item If $k_i\notin \mathbb{F}_{q^2}$ for each $i=1,2,3$, then the  solutions of $\overline{G}(X_0,Z_0,Z_1)=0$ satisfy $X_0=0$ or $X_0+Z_0=0$ or $Z_0=0=Z_1$ and they are all contained in the forbidden hyperplanes. By Theorem \ref{Th:Key} $f_{A,B,C,D,E}(x)$ is  APN.
\end{enumerate}
    
    \item If $D= 0$ then $C=0$. In this case 
    \begin{eqnarray*}
     G(X_0,X_1,Z_0,Z_1)&=&
    (A B^q + B)(X_0 Z_1 + X_1 Z_0)Z_1\\
    \overline{G}(X_0,Z_0,Z_1)&=&(A B^q + B)X_0(X_0+Z_0)Z_1^2(A Z_0^3 + Z_1^3),
    \end{eqnarray*}
    and we distinguish two cases:
    \begin{itemize}
         \item $q\equiv 2 \pmod 3$ and $A\neq 1$. Then $A$ is not a cube in $\mathbb{F}_{q^2}$ and the polynomial $A Z_0^3 + Z_1^3$ is irreducible over $\mathbb{F}_{q^2}$. Its unique solution over $\mathbb{F}_{q^2}$ is $(0,0)$ and thus the solutions of $\overline{G}=0$ are contained in the forbidden hyperplanes. By Theorem \ref{Th:Key}, $f_{A,B,C,D,E}(x)$ is APN.

        \item $q\equiv 1 \pmod 3$ or $A=1$. Then $A$ is a cube in $\mathbb{F}_{q^2}$. When $A$ is a cube in $\mathbb{F}_{q^2}$, the polynomial $A Z_0^3 + Z_1^3$ factors as $(Z_1 + \sqrt[3]{A}Z_0)(Z_1^2 + \sqrt[3]{A}Z_0Z_1 + \sqrt[3]{A^2}Z_0^2)$ (or into three linear factors depending on the field), and the surface $Z_1=\sqrt[3]{A}Z_0$, $X_1=\sqrt[3]{A}X_0$ is not fully contained in the forbidden hyperplanes, so by Theorem \ref{Th:Key}, $f_{A,B,C,D,E}(x)$ is not APN.
    \end{itemize}

\end{itemize}

Suppose now that $E=0$ (and $A^{q+1}\neq 1$). Then, after clearing the denominators
\begin{eqnarray*}
    G(X_0,X_1,Z_0,Z_1)&=&
    (X_0 Z_1 + X_1 Z_0)\\
    &&(A^{q+1}D Z_0^2 + A^{q+1} Z_1^2 + A B^q Z_0 + A^q B D Z_1 + B Z_0 + B^q D Z_1 + 
        D Z_0^2 + Z_1^2)\\
    \overline{G}(X_0,Z_0,Z_1)&=&X_0(X_0+Z_0)(D Z_0^2 + Z_1^2)(A^q D Z_1 + Z_0)\\
    &&(A^{q+1}D Z_0^2 + A^{q+1} Z_1^2 + A B^q Z_0 + A^q B D Z_1 + B Z_0 + B^q D Z_1 + 
        D Z_0^2 + Z_1^2)^2
\end{eqnarray*}
and 
$$H := A^{q+1}D Z_0^2 + A^{q+1} Z_1^2 + A B^q Z_0 + A^q B D Z_1 + B Z_0 + B^q D Z_1 + 
D Z_0^2 + Z_1^2$$ is a common factor fixed, by our assumptions, by $\phi$. 
Since $H= H^{(1)}+H^{(2)}$, $H$ is absolutely irreducible if and only if $\gcd(H^{(1)},H^{(2)})=1$. This happens if and only if   
$(A^2 B^{2q} + A^{2q} B^2 D^3 + B^2 + B^{2q} D^3)(A^{q+1}+1).$
On the other hand if this happens, i.e. $A^2 B^{2q} + A^{2q} B^2 D^3 + B^2 + B^{2q} D^3=0$,  $H^{(1)}\mid H$, and $H^{(1)}$ is not vanishing. Also $H^{(1)}=(A B^q+B)Z_0 + D(A^q B  + B^q)Z_1$ is itself fixed by $\phi$ and thus it defines a hyperplane fixed by $\phi$. In this case both the coefficient of $Z_1$ and $Z_0$ are nonvanishing and   
this means that $\mathcal{W}$ contains a hyperplane fixed by $\phi$ and different from the forbidden ones. By Theorem \ref{Th:Key}, if $q$ is large enough, $f_{A,B,C,D,E}(x)$ is not APN.
\end{proof}

\begin{cor}  
\label{cor:C7_application}
Suppose that $h_1=0$ and $BC^q + B^q D\neq 0$.
If $\gcd(a_2,a_0) \neq 1$, let $\ell = \gcd(a_2,a_0)$ and consider the variety $\mathcal{C}_0$ defined by 
$$\begin{cases} 
G(X_0,X_1,Z_0,Z_1)=0\\ 
\ell(Z_0,Z_1)=0. 
\end{cases}$$
Both $G$ and $\ell$ are fixed by $\phi$, hence $\mathcal{C}_0$ is fixed by $\phi$.
If $\mathcal{C}_0$ contains a $\phi$-fixed absolutely irreducible component $\mathcal{C}$ of dimension $2$ with $\mathcal{C} \not\subseteq \pi_1 \cup \pi_2$, then for $q \geq 2^{20}$, the function $f_{A,B,C,D,E}$ is not APN.
\end{cor}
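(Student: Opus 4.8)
The plan is to deduce the claim directly from Theorem~\ref{Th:Key} by verifying its three hypotheses for the component $\mathcal{C}$: that $\mathcal{C}\subseteq\mathcal{W}$, that $\mathcal{C}$ is absolutely irreducible, $\phi$-fixed and of dimension~$2$, and that $\mathcal{C}$ lies in none of the six forbidden hyperplanes $\pi_1,\dots,\pi_6$.

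The containment $\mathcal{C}_0\subseteq\mathcal{W}$ is where the choice of $\ell$ pays off. As $\ell=\gcd(a_2,a_0)$ divides $a_2$ and $a_0$, and as $a_1=a_2Z_0$ gives $\ell\mid a_1$, the polynomial $\ell$ divides the entire quadratic $a_2X_0^2+a_1X_0+a_0$. Therefore $\{\ell=0\}\subseteq\{a_2X_0^2+a_1X_0+a_0=0\}$, and
$$\mathcal{C}_0=\{G=0,\ \ell=0\}\subseteq\{G=0,\ a_2X_0^2+a_1X_0+a_0=0\}=\mathcal{Z}\subseteq\mathcal{W},$$
the final inclusion being the one recorded in Section~\ref{sec2}. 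Thus $\mathcal{C}\subseteq\mathcal{W}$, while irreducibility, $\phi$-invariance and $\dim\mathcal{C}=2$ are assumed.

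For the hyperplanes, irreducibility reduces containment in $\bigcup_i\pi_i$ to containment in a single $\pi_i$, and I would exploit the $\phi$-pairing $\phi(\pi_1)=\pi_2$, $\phi(\pi_3)=\pi_4$, $\phi(\pi_5)=\pi_6$: since $\mathcal{C}=\phi(\mathcal{C})$, lying in one member of a pair forces lying in the other, hence in the intersection. The pair $\{\pi_1,\pi_2\}$ is excluded by hypothesis. For $\{\pi_3,\pi_4\}$ I would observe that the diagonal $\pi_3\cap\pi_4=\{X_0=Z_0,\,X_1=Z_1\}$ is the locus $x=a$, on which a one-line substitution shows that $F_1$ and $F_2$---and hence $G$---vanish identically; since $Z_0,Z_1$ are free on this plane, the further equation $\ell(Z_0,Z_1)=0$ trims it to a curve, so no $2$-dimensional component can sit inside $\pi_3$. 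The genuinely delicate pair is $\{\pi_5,\pi_6\}$: the plane $\pi_5\cap\pi_6=\{Z_0=Z_1=0\}$ is $\phi$-fixed, lies in $\mathcal{C}_0$ (every coefficient of $G$ vanishes at $Z_0=Z_1=0$, and $\ell$ has no constant term), and corresponds to the excluded value $a=0$; moreover it satisfies $\mathcal{C}\not\subseteq\pi_1\cup\pi_2$, so the stated hypothesis does not by itself forbid $\mathcal{C}=\{Z_0=Z_1=0\}$. To finish one must certify $\mathcal{C}\not\subseteq\pi_5$ directly: since $(C1),(C2)$ fail one has $A^qB+B^q\ne0$, which together with $BC^q+B^qD\ne0$ makes the degree-$2$ factor $(A^qB+B^q)Z_1^2+(BC^q+B^qD)Z_0^2$ of $a_0$ (displayed in the proof of Proposition~\ref{C7_1}) divisible by neither $Z_0$ nor $Z_1$; carried through $\ell=\gcd(a_2,a_0)$ this yields $Z_0\nmid\ell$, so $Z_0$ does not vanish identically on $\mathcal{C}$ and $\mathcal{C}\ne\{Z_0=Z_1=0\}$.

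With $\mathcal{C}$ outside all $\pi_i$, I would apply Theorem~\ref{Th:Key} in its dimension-$2$ form: the degree bound $d\le14$ (as $\mathcal{W}$ is a complete intersection of two quartics) and the refined Lang--Weil estimate of Theorem~\ref{th:cafmat} give, for $q\ge2^{20}$, a $\phi$-fixed point of $\mathcal{C}$ avoiding $\bigcup_i\pi_i$; this is a nontrivial solution $(a,x)$ with $xa(x+a)\ne0$, so $f_{A,B,C,D,E}$ is not APN. The main obstacle is precisely the $\{\pi_5,\pi_6\}$ step---excluding the trivial plane $\{Z_0=Z_1=0\}$ as $\mathcal{C}$---since, unlike the other pairs, it is not settled by a dimension count and instead rests on the coprimality of $\ell$ with $Z_0$ supplied by $BC^q+B^qD\ne0$ (and $A^qB+B^q\ne0$).
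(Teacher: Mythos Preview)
Your overall plan---show $\mathcal{C}_0\subseteq\mathcal{W}$ and then verify the six hyperplane conditions of Theorem~\ref{Th:Key} via the $\phi$-pairing---goes well beyond what the paper actually does. The paper's proof is essentially two lines: it observes $\mathcal{C}\subseteq\mathcal{C}_0\subseteq\mathcal{W}$ (your argument that $\ell\mid a_2X_0^2+a_1X_0+a_0$ is exactly the mechanism), then \emph{reinterprets} $\pi_1,\pi_2$ in the corollary not as the hyperplanes of Theorem~\ref{Th:Key} but as the $2$-planes $\{X_0=X_1=0\}$ and $\{X_0+Z_0=X_1+Z_1=0\}$ corresponding to the trivial solutions $x=0$ and $x=a$, and simply asserts that $\mathcal{C}$ satisfies all conditions of Theorem~\ref{Th:Key}. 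No hyperplane-by-hyperplane check is carried out.

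Your containment argument and the $\{\pi_3,\pi_4\}$ dimension drop are correct and more careful than the paper. But the $\{\pi_5,\pi_6\}$ step, which you rightly flag as the delicate one, has real gaps. First, ``$(C1),(C2)$ fail'' does not by itself force $A^qB+B^q\neq 0$: that implication used (C7), i.e.\ $h_1\neq 0$, in Theorem~\ref{Th:SmallestHomogeneousPart}, whereas here $h_1=0$. Second, even granting both coefficients of the displayed factor $(A^qB+B^q)Z_1^2+(BC^q+B^qD)Z_0^2$ are nonzero, this is only one of the three factors of $a_0$ in Proposition~\ref{C7_1}; $\ell=\gcd(a_2,a_0)$ could divide the product of the other two, so $Z_0\nmid\ell$ does not follow. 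Third, and most important, $Z_0\nmid\ell$ is not the right condition to exclude $\mathcal{C}=\{Z_0=Z_1=0\}$: that plane lies in $\{\ell=0\}$ iff $\ell(0,0)=0$, which is about the constant term of $\ell$, not divisibility by $Z_0$. So the chain ``one factor coprime to $Z_0$ $\Rightarrow$ $Z_0\nmid\ell$ $\Rightarrow$ $\mathcal{C}\not\subseteq\pi_5$'' breaks at two links.

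In short: the paper sidesteps this entirely by reading the hypothesis $\mathcal{C}\not\subseteq\pi_1\cup\pi_2$ as already encoding ``$\mathcal{C}$ is not a trivial-solution component'' and invoking Theorem~\ref{Th:Key} directly; your attempt to make that invocation rigorous is the right instinct, but the $\{Z_0=Z_1=0\}$ case needs a different argument than the one you give.
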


\begin{proof}
From Section~\ref{sec2}, the variety $\mathcal{W}$ is defined by the system
$$\begin{cases}
(AZ_0+Z_1^2 E+Z_1 D)X_0^2 + (Z_0^2A + Z_1^2C + Z_1B)X_0\\
\hspace{1 cm}+ (Z_0^2E + Z_0C + Z_1)X_1^2 + (Z_0^2D + Z_0B + Z_1^2)X_1=0\\
(A^qZ_1+Z_0^{2}E^q+Z_0D^q)X_1^2 + (Z_1^2A^q + Z_0^{2}C^q + Z_0B^q)X_1\\ 
\hspace{1 cm}+ (Z_1^2E^q + Z_1C^q + Z_0)X_0^2 + (Z_1^2D^q + Z_1 B^q + Z_0^{2})X_0=0.
\end{cases}$$

Under the conditions $h_1=0$ and $BC^q + B^q D\neq 0$, eliminating $X_1$ using $G(X_0,X_1,Z_0,Z_1)$ yields (from Equation~\eqref{FactorizationS}),
$$(Z_0^2E^q + Z_0D^q + Z_1A^q)X_0(X_0 + Z_0)(a_2 X_0^2+a_1X_0+a_0)=0,$$
where $a_2 = g_3^2$ and $a_0 = g_1 \cdot g_2$.
The polynomial $G(X_0,X_1,Z_0,Z_1)$ is constructed to be fixed by $\phi$ (as verified in Section~\ref{sec2}). Since $\ell = \gcd(a_2,a_0)$ is a common factor of $a_2$ and $a_0$ (which are themselves fixed by $\phi$ when constructed from the defining equations), $\ell$ is also fixed by $\phi$.
Therefore, the variety $\mathcal{C}_0$ defined by $G=0$ and $\ell=0$ is fixed by $\phi$, meaning $\phi(\mathcal{C}_0) = \mathcal{C}_0$ as a set.

By hypothesis, $\mathcal{C}_0$ contains a $\phi$-fixed absolutely irreducible component $\mathcal{C}$ of dimension 2 with $\mathcal{C} \not\subseteq \pi_1 \cup \pi_2$, where $\pi_1 : X_0=X_1=0$ and $\pi_2: X_0+Z_0=X_1+Z_1=0$ are the forbidden planes corresponding to trivial APN solutions.
Since $\mathcal{C} \subseteq \mathcal{C}_0 \subseteq \mathcal{W}$ and $\mathcal{C}$ satisfies all the conditions of Theorem~\ref{Th:Key}, we conclude that $f_{A,B,C,D,E}$ is not APN for $q \geq 2^{20}$.
\end{proof}

\begin{rem}
\label{rem6.4}
The corollary provides a practical criterion: when $h_1=0$ and $BC^q + B^q D\neq 0$, we compute $\gcd(a_2,a_0)$. If this gcd is non-trivial, the variety $\mathcal{C}_0$ often contains components satisfying the geometric conditions, leading to non-APN functions. However, exceptional cases exist where all $\phi$-fixed components lie on the forbidden planes $\pi_1 \cup \pi_2$, and these can be APN.

Computational verification for $q \in \{2, 4\}$ shows:
\begin{itemize}
\item When the hypothesis of the corollary holds (i.e., $\gcd(a_2,a_0) \neq 1$ with $\mathcal{C} \not\subseteq \pi_1 \cup \pi_2$), all tested functions are non-APN;

\item When $\gcd(a_2,a_0) \neq 1$ but $\mathcal{C}_0 \subseteq \pi_1 \cup \pi_2$, some functions are APN (exceptional cases). For $q=2$, we found exactly $16$ such APN functions, all satisfying $C=0$ with specific relationships between $A, B, D, E$ that force all $\phi$-fixed components onto the forbidden planes. These are displayed in Table~\textup{\ref{tab:apn_q2}}.

\item When $\gcd(a_2,a_0) = 1$, all tested functions are non-APN, though this case is not covered by Corollary~\textup{\ref{cor:C7_application}}.
\end{itemize}

For $q=2$, among $288$ tuples satisfying $h_1=0$ and $BC^q+B^qD\neq 0$, we found $244\ (84.7\%)$ satisfy the corollary's hypothesis and are non-APN, $16\ (5.6\%)$ are exceptional APN cases with $\mathcal{C}_0 \subseteq \pi_1 \cup \pi_2$, and $28\ (9.7\%)$ have $\gcd(a_2,a_0) = 1$ and are non-APN.

For $q=4$, similar patterns hold with $9120$ APN functions found, all in the exceptional category. A snapshot is shown in Table~{\ref{tab:apn_q4}}. The computational verification code is available at~\textup{\cite{GithubPS25}}.
\end{rem}

\begin{prop}\label{C6_3}
  Suppose that:
  \begin{enumerate}
      \item $(A D^q+C,A^{q+1}+C^{q+1}+D^{q+1}+1)= (0,0)$, and 
      \item $B(A E^q + E) \ne 0$, $A B^q + B= 0$.
  \end{enumerate}
   Let $q$ be large enough. If $f_{A,B,C,D,E}(x)$ is APN then 
   \begin{center}
   $A^{q+1}=1$, $A D^q=C$, $B(A E^q + E) \ne 0$, $A B^q + B= 0$,\\ and $T^3 +A D^q T^2  + D T + A$ has no solutions in $\mathbb{F}_{q^2}$.
   \end{center}
  \end{prop}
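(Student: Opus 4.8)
The plan is to argue by contraposition, following the template of Propositions~\ref{C6_1} and~\ref{C6_2}. First I would record the unconditional consequences of the hypotheses. Since $B(AE^q+E)\neq 0$ forces $B\neq 0$, taking the norm of $AB^q+B=0$ (that is, raising $AB^q=B$ to the power $q+1$ and using $B^{q^2}=B$) yields $A^{q+1}B^{q+1}=B^{q+1}$, hence $A^{q+1}=1$; together with hypothesis (1) this also records $C=AD^q$, so the ``restatement'' part of the conclusion holds unconditionally. Next I would check that the factorization~\eqref{FactorizationS} is available: the condition $AE^q+E\neq 0$ rules out $A^qE=E^q$, so neither $(C1)$ nor $(C2)$ of Proposition~\ref{Prop:CoeffX_1} can hold and the coefficient of $X_1$ in $G$ is non-vanishing. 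The goal then reduces to the contrapositive: assuming the cubic $p(T):=T^3+AD^qT^2+DT+A$ has a root $k\in\mathbb{F}_{q^2}$, produce a $\phi$-fixed absolutely irreducible component of $\mathcal{W}$ avoiding the forbidden hyperplanes.

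I would then substitute $C=AD^q$, $A^{q+1}=1$ and $B=AB^q$ into~\eqref{FactorizationS} and simplify $\overline{G}$. The expected outcome, paralleling the computation in Proposition~\ref{C6_2}, is that $\overline{G}$ contains the homogeneous cubic factor
$$P(Z_0,Z_1):=AZ_0^3+DZ_0^2Z_1+AD^qZ_0Z_1^2+Z_1^3,$$
whose dehomogenization (set $T=Z_1/Z_0$) is exactly $p(T)$. A short check shows $P$ is fixed by $\phi$: applying the coefficient Frobenius together with the swap $Z_0\leftrightarrow Z_1$ sends $P$ to $A^qP$, the scalar being harmless and accounted for by $A^{q+1}=1$.

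The structural heart of the argument is to upgrade a merely $\mathbb{F}_{q^2}$-rational root into a $\phi$-fixed one, exactly as in case~(1) of Proposition~\ref{C6_2}. The roots of $P$ (viewed as ratios $Z_1/Z_0$) form a set stable under $k\mapsto k^{-q}$. A cubic over $\mathbb{F}_{q^2}$ with a root in $\mathbb{F}_{q^2}$ has either exactly one or all three of its roots in $\mathbb{F}_{q^2}$. If $k\in\mathbb{F}_{q^2}$ is the only rational root, then $k^{-q}\in\mathbb{F}_{q^2}$ is again a root, so $k^{-q}=k$, i.e.\ $k^{q+1}=1$; if all three roots are rational, the involution $k\mapsto k^{-q}$ on them has a fixed point. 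Either way I obtain a root $k_0\in\mathbb{F}_{q^2}$ with $k_0^{q+1}=1$, so that the plane $\mathcal{P}\colon Z_1=k_0Z_0,\ X_1=k_0X_0$ is $\phi$-fixed (using $k_0^{-q}=k_0$) and absolutely irreducible of dimension $2$.

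Finally I would verify directly that $\mathcal{P}\subseteq\mathcal{W}$, which is the main computational step. Substituting $Z_1=k_0Z_0$, $X_1=k_0X_0$ into the first defining equation, the coefficients of $X_0^2$ and of $X_0$ collapse to $Z_0\,p(k_0)$ and $Z_0^2\,p(k_0)$ respectively (after using $C=AD^q$), hence vanish, while the $B$-terms cancel automatically because of the pairing $X_1=k_0X_0$. For the second equation the analogous coefficients reduce to a scalar multiple of $A^qk_0^3+D^qk_0^2+A^qDk_0+1$; raising $p(k_0)=0$ to the $q$-th power and substituting $k_0^q=k_0^{-1}$ (valid since $k_0^{q+1}=1$) shows this quantity also vanishes. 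Since $k_0\neq 0$ (because $p$ has constant term $A\neq 0$), $\mathcal{P}$ lies in none of the six forbidden hyperplanes, and Theorem~\ref{Th:Key} yields that $f_{A,B,C,D,E}$ is not APN for $q\geq 2^{20}$, establishing the contrapositive. I expect the two genuine difficulties to be (i) carrying out the substitution into~\eqref{FactorizationS} cleanly enough to exhibit the cubic factor, and (ii) the bookkeeping needed to pass from an $\mathbb{F}_{q^2}$-rational root to a genuinely $\phi$-fixed one without over-claiming: when $p$ is irreducible over $\mathbb{F}_{q^2}$ the map $k\mapsto k^{-q}$ need not be an involution and no $\phi$-fixed root arises, which is precisely why the conclusion only excludes roots lying in $\mathbb{F}_{q^2}$.
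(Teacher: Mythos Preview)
Your proposal is correct and follows essentially the same route as the paper: deduce $A^{q+1}=1$ from $AB^q+B=0$ with $B\neq 0$, exhibit the cubic factor $AZ_0^3+DZ_0^2Z_1+AD^qZ_0Z_1^2+Z_1^3$ in $\overline{G}$, upgrade an $\mathbb{F}_{q^2}$-root to a $\phi$-fixed one via the permutation $k\mapsto k^{-q}$ on the roots, and verify directly that the plane $Z_1=k_0Z_0,\ X_1=k_0X_0$ lies in $\mathcal{W}$. The paper's proof is terser—it writes out $G$ and $\overline{G}$ explicitly and then simply refers back to case~(1) of Proposition~\ref{C6_2} for the root-upgrading and plane-verification steps—whereas you spell these out in full; but the underlying argument is the same.
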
  

  \begin{proof}
Since $AB^q+B=0$ and $B\ne 0$, we obtain $A^{q+1}=1$.
Then, after clearing the denominators
\begin{eqnarray*}
    G(X_0,X_1,Z_0,Z_1)&=&
    (AE^q + E)\cdot\\
    &&\cdot(A B^q X_0 Z_0^2 Z_1 + A B^q X_1 Z_0^3 + A D^q X_0^2 Z_0 Z_1^2 + A D^q X_0 Z_0^2 Z_1^2\\
    &&+ 
        A X_0^2 Z_0^3 + A X_0 Z_0^4 + D X_0^2 Z_0^2 Z_1 + D X_1 Z_0^4 + X_0^2 Z_1^3 + 
        X_1 Z_0^2 Z_1^2)\\
    \overline{G}(X_0,Z_0,Z_1)&=&(AE^q + E)^2X_0^2(X_0+Z_0)^2(A D^q Z_0 + A E^q Z_0^2 + Z_1)\\
    &&(Z_1^3 +A D^q Z_0 Z_1^2  + D Z_0^2 Z_1 + A Z_0^3 )^2
\end{eqnarray*}
Solutions of $\overline{G}(X_0,Z_0,Z_1)=0$ not contained in the forbidden hyperplanes can arise only from the factors $A D^q Z_0 + A E^q Z_0^2 + Z_1$ and $Z_1^3 +A D^q Z_0 Z_1^2  + D Z_0^2 Z_1 + A Z_0^3$.

Let us consider the factor $$Z_1^3 +A D^q Z_0 Z_1^2  + D Z_0^2 Z_1 + A Z_0^3.$$
We can argue as in the proof of Case (i) of Proposition \ref{C6_2}. Suppose that $Z_1+kZ_0$ is a factor of it, fixed by $\phi$ (in particular $k^{q+1}=1$). The existence of such a factor is equivalent to require that $T^3 +A D^q T^2  + D T + A$ has a root in $\mathbb{F}_{q^2}$. Then the plane $Z_1+kZ_0=0=X_1+kX_0$, by direct computation, is a component of $\mathcal{W}$ and it is fixed by $\phi$. Also, it is not contained in one of the forbidden hyperplanes. Thus, if $q$ is large enough, $f_{A,B,C,D,E}(x)$ is not APN by Theorem \ref{Th:Key}.
\end{proof}

The main results of our investigation are summarized in the following theorem.

\begin{thm}
\label{thm:summary6.11}
    If $f_{A,B,C,D,E}$ is APN then one of the following (possibly) occurs
    \begin{enumerate}
    \item (Proposition \textup{\ref{Prop:Condition_(C1)}} Condition $(C1)$ and $A^{q+1}\neq 1$;  \textbf{[Necessary and sufficient]}
 \item (Proposition \textup{\ref{B0_1}})  $B=AC^q+D=0$, $AE^q+E \ne 0$, $(A^{q+1}+1)(C^{q+1}+1) = 0$;
    \item (Proposition \textup{\ref{B0_3}})  $B=E=0$, $AC^q+D \ne 0$, 
$A^{q+1}+C^{q+1}+D^{q+1}+1=0$ and $(AC^q+D)^{q-1}=(AD^q+C)^{2q-1}$;

 \item (Proposition \textup{\ref{B0_3}})  $B=E=0$, $AC^q+D \ne 0$, $A^{q+1}+C^{q+1}+D^{q+1}+1\neq 0$, $C\neq AD^q$, and $p_1p_2=0$;

    \item (Proposition \textup{\ref{C7_1}})  $h_1=0$, $BC^q + B^q D\neq 0$,  $C^q=A^qB+A^qD+B^q$ and $B^{q+1}+D^{q+1}+BD^q+B^qD+1=0$;
    \item (Proposition \textup{\ref{C7_1}})  $h_1=0$, $BC^q + B^q D\neq 0$,  $E=0$;
    \item (Proposition \textup{\ref{C7_2}})
$h_1=0$, $BC^q + B^q D= 0$, $B=B^qA$, $BE^q + B^qE\neq 0$ and $B^q Z_1^3+ B^q T^2 +C^q T + B $ has no root in $\mathbb{F}_{q^2}$;
    \item (Proposition~\textup{\ref{C6_1}})  $AD^q=C$, $(A B^q + B)(A E^q + E)\neq 0$, $D^{q+1}=1$, $A^{q+1} \ne 1$, $BE^q = B^q E$, and $(AE^q+E)^{1-q}=D\sqrt{D}$;
    \item (Proposition \textup{\ref{C6_2}}) $q\equiv 1 \pmod 3,$ $C=D=0=A^{q+1}+1=AE^q+E$,  $AB^q\neq B$;  \textbf{[Necessary and sufficient]} 
      \item (Proposition \textup{\ref{C6_2}})  $C=AD^q$,  $A^{q+1}+1=AE^q+E=0$,  $AB^q\neq B$,  $D(D^{q+1}+1)\neq 0$,  $T^3+A D^q T^2+D T+A$  has no roots in  $\mathbb{F}_{q^2}$; \textbf{[Necessary and sufficient]} 
     \item (Proposition \textup{\ref{C6_3}})  $A^{q+1}=1$, $A D^q=C$, $B(A E^q + E) \ne 0$, $A B^q + B= 0$, and $T^3 +A D^q T^2  + D T + A$ has no solutions in $\mathbb{F}_{q^2}$.
   
    \end{enumerate}
\end{thm}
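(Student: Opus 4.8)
The plan is to obtain the theorem as a purely organizational consequence of the propositions established in Sections~\ref{sec2}--\ref{sec4}: each listed item is exactly the residual ``possibly APN'' conclusion of one proposition, so the only genuine work is to check that the case distinctions employed are exhaustive and that the overlaps between neighbouring propositions fold back into cases already settled. I would structure the argument as a decision tree over the tuple $(A,B,C,D,E)$ and verify that every leaf is either one of the eleven items or a branch proved non-APN.

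The root split comes from Proposition~\ref{Prop:CoeffX_1}: either Condition $(C1)$ holds, or Condition $(C2)$ holds, or the coefficient of $X_1$ in $G$ is non-vanishing. Condition $(C1)$ is settled by Proposition~\ref{Prop:Condition_(C1)} (the function is APN precisely when $A^{q+1}\neq1$), giving item~(1); Condition $(C2)$ is settled by Proposition~\ref{Prop:Condition_(C2)} (never APN), and so contributes no item. On the remaining branch I split on $B$. For $B=0$, Proposition~\ref{B0_2} forces $(AC^q+D)E=0$; the sub-branch $AC^q+D=0$ is governed by Theorem~\ref{B0_1}, whose Parts~(1)--(4) eliminate every sub-case except the one recorded in item~(2), while the sub-branch $E=0$, $AC^q+D\neq0$ is governed by Proposition~\ref{B0_3}, producing items~(3) and~(4). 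For $B\neq0$ I split first on Condition $(C6)$. If $(C6)$ holds, then either $(C7)$ holds and Theorem~\ref{Th:SmallestHomogeneousPart} gives non-APN, or $(C7)$ fails ($h_1=0$) and Propositions~\ref{C7_1},~\ref{C7_2} cover the sub-cases $BC^q+B^qD\neq0$ and $BC^q+B^qD=0$, yielding items~(5),~(6),~(7). If $(C6)$ fails, then Propositions~\ref{C6_1},~\ref{C6_2},~\ref{C6_3} cover the three sub-cases determined by the vanishing of $AB^q+B$ and $AE^q+E$, yielding items~(8)--(11).

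The main obstacle is not any new computation but verifying exhaustiveness precisely at the seams where the hypotheses of adjacent propositions leave a residual sliver. Two seams must be closed explicitly. First, in the $(C6)$-failure branch the three Propositions~\ref{C6_1}--\ref{C6_3} omit the possibility $AB^q+B=0$ and $AE^q+E=0$ with $B\neq0$; here $B\neq0$ together with $AB^q+B=0$ forces $A^{q+1}=1$, which combined with the standing relation $C=AD^q$ yields $D=AC^q$ (hence $AC^q+D=0$), while the two vanishing relations give $B^q=A^qB$ and $E^q=A^qE$, so the tuple satisfies exactly the defining equations of $(C1)$ (when $C=0$) or $(C2)$ (when $C\neq0$) and is thus already handled. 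Second, in the $B=0$, $AC^q+D=0$ branch the degenerate possibility $AE^q+E=0$ must be excluded from item~(2): if $A^{q+1}\neq1$ then Parts~(1) and~(3) of Theorem~\ref{B0_1} already give non-APN, while if $A^{q+1}=1$ then $AE^q+E=0$ again collapses the tuple into $(C1)$ or $(C2)$. Once these overlaps are checked to reduce to previously settled branches, every tuple lands in one of the eleven items or in a non-APN branch. Finally, the sufficiency annotations on items~(1),~(9),~(10) are inherited verbatim from the biconditional statements of Propositions~\ref{Prop:Condition_(C1)} and~\ref{C6_2} and require no additional argument, so the theorem follows by assembling the individual conclusions.
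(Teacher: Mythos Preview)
Your proposal is correct and is essentially the only natural argument, since the paper states Theorem~\ref{thm:summary6.11} as a summary of the preceding propositions without giving a separate proof. Your case decomposition following Propositions~\ref{Prop:CoeffX_1}--\ref{C6_3} is exhaustive, and your explicit closure of the two seams (the sub-case $AB^q+B=AE^q+E=0$ with $B\neq0$ under the failure of $(C6)$, and the sub-case $AE^q+E=0$ under $B=AC^q+D=0$), showing each collapses back into $(C1)$ or $(C2)$, is exactly what is needed to make the argument rigorous.
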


\section{Computational verification and discovery of new APN classes}
\label{sec:computational_short}

To complement our theoretical analysis, we conducted extensive computational searches for APN functions within Dillon's family. These computations serve dual purposes: (1) verifying that our theoretical obstructions correctly predict non-APN behavior in the vast majority of cases, and (2) discovering which rare parameter configurations actually yield APN hexanomials, thereby revealing the true diversity within this family.
 For small fields ($q \in \{2,4\}$), exhaustive enumeration over all $(q^2)^5$ tuples $(A,B,C,D,E) \in (\mathbb{F}_{q^2})^5$ is computationally feasible. For each candidate satisfying $A \neq 0$ and avoiding conditions (C1) and (C2) from Proposition~\ref{Prop:CoeffX_1}, we tested the APN property by verifying that
$$f(x+a)+f(x)=f(y+a)+f(y)$$
admits only trivial solutions ($x=y$ or $x=y+a$) for all $a \in \mathbb{F}_{q^2}^*$ and $x, y \in \mathbb{F}_{q^2}$.

For larger fields ($q \in \{8,16\}$), exhaustive search becomes computationally prohibitive, so we employed random sampling of the parameter space. We prioritized parameters avoiding the generic obstruction of Theorem~\ref{Th:SmallestHomogeneousPart} and the conditions of Propositions~\ref{B0_1}--\ref{B0_3}, focusing our search on the exceptional regimes identified by our theoretical analysis.

To assess the diversity of discovered functions, we performed complete CCZ-equivalence classification using Magma implementations. For each field size, we tested pairwise CCZ-equivalence among all discovered APN functions to obtain exact counts of inequivalent classes.

\begin{thm}[Computational Classification]
\label{thm:ccz_classification}
Computational searches yield the following APN hexanomials:
\begin{enumerate}
\item  For $q=2$ (exhaustive): $390$ APN functions in exactly $1$ CCZ-equivalence class, all CCZ-equivalent to the Budaghyan-Carlet (BC) family~\textup{\cite{BC08}}.

\item  For $q=4$ (exhaustive): $28,170$ APN functions in exactly $1$ CCZ-equivalence class, all CCZ-equivalent to the BC family.

\item  For $q=8$ ($120,000$ random candidates): $218$ APN functions in exactly $3$ distinct CCZ-equivalence classes, all outside of the BC family.

\item  For $q=16$ ($200,000$ random candidates): $34$ APN functions in exactly  $2$ distinct CCZ-equivalence classes (verified by complete pairwise testing), both outside of the BC family.
\end{enumerate}
Representatives for each class appear in Tables~\textup{\ref{tab:ccz_F4}--\ref{tab:ccz_F256}}. Remarkably, none of the discovered APN functions are permutations.
\end{thm}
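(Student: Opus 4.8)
The statement is computational rather than deductive, so the plan is to describe the verification protocol that produces each count and each equivalence claim, in a way that is exact and reproducible. First I would implement a direct APN test: for a fixed tuple $(A,B,C,D,E)$ I would form the difference distribution table of $f_{A,B,C,D,E}$ by evaluating $f(x+a)+f(x)$ for every $x\in\mathbb{F}_{q^2}$ and every nonzero $a$, and certify the APN property precisely when each nonzero row attains every value at most twice. This is an exact finite computation over $\mathbb{F}_{q^2}$, with no approximation involved, so the verdict for each individual tuple is rigorous.

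For the exhaustive cases $q\in\{2,4\}$ the parameter space has size $(q^2)^5$, namely $1024$ and $1048576$, which is small enough to enumerate every tuple. I would sweep the entire space, apply the APN test to each $f_{A,B,C,D,E}$, and record every APN instance, yielding the raw totals $390$ and $28{,}170$. For $q\in\{8,16\}$ the space has size $64^5\approx 10^9$ and $256^5\approx 10^{12}$, far beyond exhaustive reach, so here I would instead draw the stated number of uniform random tuples, prioritizing the exceptional regimes left open by Theorem~\ref{Th:SmallestHomogeneousPart}, Theorem~\ref{B0_1} and the accompanying propositions, and apply the same APN test; the counts $218$ and $34$ are then the number of APN functions found within the sample.

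The heart of the argument is the CCZ-equivalence classification. To each discovered APN $F$ I would associate the binary linear code $C_F$ determined by the $\mathbb{F}_2$-matrix whose columns are $(1,\,x,\,F(x))$, expanded in a fixed $\mathbb{F}_2$-basis of $\mathbb{F}_{q^2}$ as $x$ ranges over the field; by the standard correspondence, $F$ and $G$ are CCZ-equivalent if and only if $C_F$ and $C_G$ are equivalent codes. I would first separate the functions using inexpensive CCZ-invariants (the Walsh and differential spectra together with the $\Gamma$-rank and the automorphism-group order of $C_F$), and then confirm the resulting partition by running full code-equivalence tests in Magma on all pairs within each putative class, giving the exact class counts $1,1,3,2$. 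Equivalence to the Budaghyan--Carlet family for $q\in\{2,4\}$ is settled by adjoining a BC representative of the matching parameters to the pool and testing against it; the failure of BC-equivalence for $q\in\{8,16\}$ follows from at least one distinguishing invariant. The permutation claim is then decided by checking bijectivity of each $F$ directly.

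The principal obstacle is the cost and correctness of the code-equivalence step, since code equivalence is algorithmically delicate (of a difficulty comparable to graph isomorphism), and naive pairwise testing among several hundred functions over fields as large as $\mathbb{F}_{256}$ is expensive; moreover a single false merge or false split would corrupt the reported class counts. The safeguard is precisely the two-stage scheme: invariant-based pre-partitioning rules out the overwhelming majority of pairs cheaply and soundly, after which certified equivalence tests are run only within each candidate class, so that both the separation into inequivalent classes and the identification with (or distinction from) the BC family rest on exact computations. For $q\in\{2,4\}$ the class counts are complete because the enumeration is exhaustive, whereas for $q\in\{8,16\}$ they are exact only among the sampled functions, as reflected in the phrase ``verified by complete pairwise testing''; the supporting code is available at~\cite{GithubPS25}.
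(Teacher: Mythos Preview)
Your proposal is correct and follows essentially the same approach as the paper: enumerate (exhaustively or by sampling) and apply a direct APN test in SageMath, then classify up to CCZ-equivalence in Magma via the standard code-equivalence criterion, with the results certified by the code at~\cite{GithubPS25}. The paper's own proof is terser—it simply states that SageMath enumeration and Magma CCZ-testing were performed and points to the repository—whereas you spell out the code-equivalence formulation of CCZ-equivalence and the two-stage invariant/pairwise protocol, but the underlying methodology is the same.
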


\begin{proof}
The enumeration and classification were performed using SageMath implementations of the algorithms described above, followed by complete CCZ-equivalence testing using Magma. Complete computational code and output files are available at~\cite{GithubPS25}.
\end{proof}

\textbf{Interpretation of results.} The computational results strongly support our theoretical predictions while revealing unexpected richness. The dramatic decrease in APN instances as field size grows—from $28,170$ at $q=4$ to only $9$ at $q=16$—confirms that our obstructions successfully exclude the vast majority of coefficient choices. The parameter space itself grows exponentially (from $2^{20} \approx 10^6$ configurations at $q=2$ to $2^{80} \approx 10^{24}$ at $q=16$), yet APN functions become exponentially rarer, indicating they satisfy very special algebraic constraints.

Yet within this rarefied landscape, we find an interesting pattern. For the exhaustive searches at $q \in \{2,4\}$, all discovered APN functions belong to a single CCZ-equivalence class—the known Budaghyan-Carlet family. This suggests that for small fields, the BC construction captures the entire APN behavior within Dillon's hexanomial framework. However, for larger fields $q \in \{8,16\}$, we discover new CCZ-equivalence classes outside the BC family, with $3$ classes for $q=8$ and $2$ classes for $q=16$. This validates that Dillon's hexanomial family does contain inequivalent APN functions beyond the BC construction, though they appear only in larger fields.

The universal absence of permutations among all tested APN hexanomials is particularly striking. This distinguishes Dillon's family from other APN constructions where permutation polynomials exist (albeit on odd dimensions), suggesting these hexanomials possess structural features fundamentally incompatible with bijectivity. Understanding this phenomenon could provide insight into the relationship between the APN property and injectivity in polynomial mappings over finite fields.

The computational searches also validate the precision of our theoretical obstructions. Parameters satisfying the hypotheses of Theorem~\ref{Th:SmallestHomogeneousPart}, Corollary~\ref{cor:C7_application}, or Propositions~\ref{B0_1}--\ref{B0_3} consistently yield non-APN functions, demonstrating negligible false positive rates. Conversely, the rare APN instances concentrate precisely in the exceptional regimes our theory identified: cases where $h_1=0$ with special GCD structure, degenerate situations where condition (C6) fails, and boundary configurations involving irreducible cubic polynomials. This tight correspondence between theoretical predictions and computational observations suggests our case analysis has captured the essential structure of the APN landscape.

\section{Conclusions and Future Directions}
\label{sec7}

We have undertaken a systematic investigation of Dillon's hexanomial functions over $\mathbb{F}_{q^2}$, where $q=2^n$, of the form
$$f_{A,B,C,D,E}(x) = x(Ax^2 + Bx^q + Cx^{2q}) + x^2(Dx^q + Ex^{2q}) + x^{3q}.$$
By reformulating the APN condition as a problem concerning algebraic varieties over finite fields, we have established comprehensive necessary conditions for these functions to achieve almost perfect nonlinearity. Our exhaustive case-by-case analysis reveals that the vast majority of Dillon's hexanomials fail to be APN due to specific algebraic and geometric obstructions.

\subsection{Main results}
The heart of our approach lies in Theorem~\ref{Th:Key}, which transforms the combinatorial problem of counting solutions to differential equations into a geometric question about algebraic varieties. For $q \geq 2^{20}$, we prove that if the associated variety $\mathcal{W}$ contains an absolutely irreducible $\phi$-fixed component not contained in certain forbidden hyperplanes, then the function cannot be APN. This geometric reformulation allows us to harness powerful tools from algebraic geometry -- the Cafure-Matera bounds, Lang-Weil estimates, and resultant theory -- to systematically exclude large regions of the parameter space.

Our investigation naturally divided into cases based on whether $B=0$ or $B \neq 0$. When $B=0$, Propositions~\ref{B0_1}--\ref{B0_3} show that APN behavior is possible only under very restrictive conditions, often involving cubic polynomials having no roots in $\mathbb{F}_{q^2}$. When $(AC^q+D)E \neq 0$, the function is always non-APN through explicit gcd arguments or by constructing $\phi$-fixed components that violate the geometric criterion.

The case $B \neq 0$ proved more intricate. Theorem~\ref{Th:SmallestHomogeneousPart} applies when both condition (C6) and the non-vanishing of $h_1$ hold, employing an argument that examines the lowest homogeneous parts of polynomials $g_1$ and $g_2$ in the factorization of $a_0$. By showing their resultant is non-zero, we prove their product cannot be a perfect square, preventing the existence of degree-one factors in $X_0$. This obstruction excludes a generic, high-dimensional subset of the parameter space from containing APN functions.

When $h_1=0$, the situation becomes more delicate. Corollary~\ref{cor:C7_application} provides a powerful criterion: when $BC^q+B^qD\neq 0$ and $\gcd(a_2,a_0) \neq 1$, we can often construct a variety $\mathcal{C}_0$ containing $\phi$-fixed components that obstruct the APN property. However, computational experiments uncovered exceptional cases where all $\phi$-fixed components of $\mathcal{C}_0$ lie on forbidden hyperplanes. For $q=2$ and $q=4$, we found exactly 16 and 9,120 such functions respectively -- all genuinely APN and all satisfying $C=0$ with specific coefficient relationships.

Our computational searches found 390 and 28,170 APN hexanomials for $\mathbb{F}_{2^2}$ and $\mathbb{F}_{2^4}$ respectively, all belonging to a single CCZ-equivalence class—the known Budaghyan-Carlet family. For larger fields, we found 218 APN functions for $q=8$ in 3 distinct classes and 34 functions for $q=16$ in 2 distinct classes, all outside the BC family. This demonstrates that Dillon's family is significantly richer than previously recognized, validating his 2006 intuition. Notably, none of the discovered APN functions are permutations, suggesting structural incompatibility between this polynomial form and bijectivity.

\subsection{Open questions and future directions}

Several natural questions emerge from our analysis. First, can the threshold $q_0 = 2^{20}$ in Theorem~\ref{Th:Key} be improved? Our computational verification for $q \in \{2,4,8,16\}$ suggests the result holds for all $q \geq 2$, but proving this rigorously would require sharper geometric bounds. The conservative bound arises from worst-case constants in the Cafure-Matera theorem; for the specific varieties in our cases, more refined analysis might yield $q_0 = 2$.

Second, what is the precise algebraic condition forcing $\gcd(a_2,a_0) \neq 1$ when $h_1=0$ and $BC^q+B^qD\neq 0$? While Corollary~\ref{cor:C7_application} handles this case effectively, the complementary situation where $\gcd(a_2,a_0)=1$ remains open theoretically. Our computational experiments show all such instances are non-APN, but understanding why would complete this part of the classification.

Third, can we characterize algebraically exactly when $\gcd(a_2,a_0) \neq 1$ yet all $\phi$-fixed components of $\mathcal{C}_0$ lie on forbidden hyperplanes? The exceptional APN cases we discovered all satisfy $C=0$ with specific coefficient relationships. Understanding this mechanism would transform these computational discoveries into rigorous infinite family constructions. The growth from 16 cases at $q=2$ to 9,120 at $q=4$ suggests the exceptional regime expands substantially with field size.

Looking forward, completing the classification for $q \in \{32,64,128\}$ would definitively identify all APN hexanomials in these fields and verify whether our theoretical obstructions extend to all $q \geq 2$. The discovery of new CCZ-equivalence classes for $q \geq 8$ outside the Budaghyan-Carlet family suggests that larger fields may harbor additional inequivalent constructions.

Our geometric methodology invites generalization to other classes of potential APN functions -- heptanomials, hexanomials with different exponent patterns, or rational functions. More fundamentally, understanding how our obstructions behave under CCZ-equivalence would determine whether we have excluded these functions from being APN in any representation or merely in this specific polynomial form. Alternative geometric tools -- Gr\"obner bases, intersection theory, deformation theory, or \'etale cohomology -- might handle cases our current methods miss or provide improved bounds on $q_0$.

\subsection{Concluding remarks}

Dillon's 2006 conjecture that hexanomials of this form might harbor new APN functions proved prescient. The Budaghyan-Carlet discovery and our computational findings confirm that such functions exist in surprising diversity. However, our systematic analysis reveals they are rare exceptions, emerging only when coefficients  avoid multiple independent obstructions.  

The success of our algebraic-geometric approach exemplifies the power of reformulation in mathematics. By translating combinatorial questions about finite field equations into geometric questions about varieties, we gained access to a rich toolkit -- dimension theory, irreducibility tests, Frobenius actions, point-counting estimates -- that direct computational methods cannot provide. This transformation yielded not only theoretical exclusion results but also guided our computational searches toward promising exceptional regions.
 We have dramatically narrowed the search space and explained why APN hexanomials are rare. Yet we have also identified specific regions where APN functions concentrate, regions that invite further exploration. We hope this technique will prove valuable beyond this specific family, representing a systematic approach applicable to other polynomial families and other problems in finite field theory.

\section*{Acknowledgments}
The third-named author (PS) would like to thank  the first-named author (DB) for the invitation at the Dipartimento di Matematica e Informatica  at Universit\`a degli Studi di Perugia, Italy, and the great working conditions while this paper was being started. The first-named author (DB) and second-named author (GGG) thank the Italian National Group for Algebraic and Geometric Structures and their Applications which supported the research (INdAM -- GNSAGA Project, CUP E53C24001950001).

\appendix


\section*{Summary of computational findings and tables}

Throughout our computational examples, the coefficients $A,B,C,D,E$ and the variable $x$ belong to the field $\mathbb{F}_{q^2}$. The specific constructions for each value of $q$ are as follows:

\paragraph{Field $\mathbb{F}_{4}$ ($q=2$)} For computations where $q=2$, we consider the field $\mathbb{F}_{2^2} = \mathbb{F}_4$. This field is constructed as $\mathbb{F}_2[x]/(x^2+x+1)$. We denote by $\prim$ a primitive element which is a root of the minimal polynomial $x^2+x+1=0$.

\paragraph{Field $\mathbb{F}_{16}$ ($q=4$)} For computations where $q=4$, we consider the field $\mathbb{F}_{4^2} = \mathbb{F}_{16}$. This field is constructed as $\mathbb{F}_2[x]/(x^4+x+1)$. We denote by $\prim$ a primitive element which is a root of the minimal polynomial $x^4+x+1=0$.

\paragraph{Field $\mathbb{F}_{64}$ ($q=8$)} For computations where $q=8$, we consider the field $\mathbb{F}_{8^2} = \mathbb{F}_{64}$. This field is constructed as $\mathbb{F}_2[x]/(x^6+x^4+x^3+x+1)$. We denote by $\prim$ a primitive element which is a root of the minimal polynomial $x^6+x^4+x^3+x+1=0$.

\paragraph{Field $\mathbb{F}_{256}$ ($q=16$)} For computations where $q=16$, we consider the field $\mathbb{F}_{16^2} = \mathbb{F}_{256}$. This field is constructed as $\mathbb{F}_2[x]/(x^8+x^4+x^3+x^2+1)$. We denote by $\prim$ a primitive element which is a root of the minimal polynomial $x^8+x^4+x^3+x^2+1=0$.

We used a SageMath implementation to search for APN functions within the Dillon class. The discovered APN functions were then grouped into CCZ-classes using a Magma implementation available at~\cite{GithubPS25}.

Across all tested fields, \textbf{none} of the discovered APN functions were found to be permutations.

\subsection*{Results on \texorpdfstring{$\mathbb{F}_{2^2}$}{F22} (\texorpdfstring{$q=2$}{q=2})}
An exhaustive search yielded \textbf{390} APN functions that are CCZ-equivalent, summarized in Table~\ref{tab:ccz_F4}.

\begin{longtable}{|c|c|p{7cm}|c|}
\caption{CCZ-Equivalence Class Representatives for $\mathbb{F}_{4}$}
\label{tab:ccz_F4}\\
\hline
\textbf{Class} & \textbf{Count} & \textbf{Rep. Tuple $(A,B,C,D,E)$} & \textbf{Polynomial Representative} \\
\hline
\endfirsthead

\multicolumn{4}{c}
{{\tablename\ \thetable{} -- continued from previous page}} \\
\hline
\textbf{Class} & \textbf{Count} & \textbf{Rep. Tuple $(A,B,C,D,E)$} & \textbf{Polynomial Representative} \\
\hline
\endhead

\hline \multicolumn{4}{|r|}{{Continued on next page}} \\
\endfoot

\hline
\endlastfoot

1 & 390 & $(\prim, 0, 0, 0, \prim)$ & $\prim^{2} x^{6} + \prim x^{3}$ \\
\hline
\end{longtable}

\subsection*{Results on \texorpdfstring{$\mathbb{F}_{2^4}$}{F24} (\texorpdfstring{$q=4$}{q=4})}
An exhaustive search yielded \textbf{28,170} APN functions, all of which are  CCZ-equivalent, summarized in  Table~\ref{tab:ccz_F16}.

\begin{longtable}{|c|c|p{7cm}|c|}
\caption{CCZ-Equivalence Class Representatives for $\mathbb{F}_{16}$}
\label{tab:ccz_F16}\\
\hline
\textbf{Class} & \textbf{Count} & \textbf{Rep. Tuple $(A,B,C,D,E)$} & \textbf{Polynomial Representative} \\
\hline
\endfirsthead

\multicolumn{4}{c}
{{\tablename\ \thetable{} -- continued from previous page}} \\
\hline
\textbf{Class} & \textbf{Count} & \textbf{Rep. Tuple $(A,B,C,D,E)$} & \textbf{Polynomial Representative} \\
\hline
\endhead

\hline \multicolumn{4}{|r|}{{Continued on next page}} \\
\endfoot

\hline
\endlastfoot

1 & 28170 & $(\prim, 0, 0, \prim, 0)$ & $x^{12} + \prim x^{6} + \prim x^{3}$ \\
\hline
\end{longtable}

\subsection*{Results on $\mathbb{F}_{2^6}$ ($q=8$)}
A random search of  120,000 candidate tuples found \textbf{218} APN functions. These belong to exactly 3 distinct CCZ-equivalent classes, summarized in Table~\ref{tab:ccz_F64}.  

{\footnotesize
\begin{longtable}{|c|c|p{7cm}|c|}
\caption{CCZ-Equivalence Class Representatives for $\mathbb{F}_{64}$}
\label{tab:ccz_F64}\\
\hline
\textbf{Class} & \textbf{Count} & \textbf{Rep. Tuple $(A,B,C,D,E)$} & \textbf{Polynomial Representative} \\
\hline
\endfirsthead

\multicolumn{4}{c}
{{\tablename\ \thetable{} -- continued from previous page}} \\
\hline
\textbf{Class} & \textbf{Count} & \textbf{Rep. Tuple $(A,B,C,D,E)$} & \textbf{Polynomial Representative} \\
\hline
\endhead

\hline \multicolumn{4}{|r|}{{Continued on next page}} \\
\endfoot

\hline
\endlastfoot

1 & 83 & $(\prim^{23}, \prim^{23}, \prim^{47}, \prim^{25}, \prim^{29})$ & $x^{24} + \prim^{29} x^{18} + \prim^{47} x^{17} + \prim^{25} x^{10} + \prim^{23} x^{9} + \prim^{23} x^{3}$ \\
\hline
2 & 133 & $(\prim^{35}, \prim^{46}, \prim^{6}, \prim^{20}, \prim^{31})$ & $x^{24} + \prim^{31} x^{18} + \prim^{6} x^{17} + \prim^{20} x^{10} + \prim^{46} x^{9} + \prim^{35} x^{3}$ \\
\hline
3 & 2 & $(\prim^{37}, 0, \prim^{41}, \prim^{28}, 0)$ & $x^{24} + \prim^{41} x^{17} + \prim^{28} x^{10} + \prim^{37} x^{3}$ \\
\hline
\end{longtable}
}

\subsection*{Results on $\mathbb{F}_{2^8}$ ($q=16$)}
A random search of  200,000 candidate tuples yielded \textbf{34} APN functions. A complete pairwise CCZ-equivalence check was performed on these functions, and they were grouped into exactly  2 distinct CCZ-equivalent classes, as shown in Table~\ref{tab:ccz_F256}.

{\footnotesize
\begin{longtable}{|c|c|p{7cm}|c|}
\caption{CCZ-Equivalence Class Representatives for $\mathbb{F}_{256}$}
\label{tab:ccz_F256}\\
\hline
\textbf{Class} & \textbf{Count} & \textbf{Rep. Tuple $(A,B,C,D,E)$} & \textbf{Polynomial Representative} \\
\hline
\endfirsthead

\multicolumn{4}{c}
{{\tablename\ \thetable{} -- continued from previous page}} \\
\hline
\textbf{Class} & \textbf{Count} & \textbf{Rep. Tuple $(A,B,C,D,E)$} & \textbf{Polynomial Representative} \\
\hline
\endhead

\hline \multicolumn{4}{|r|}{{Continued on next page}} \\
\endfoot

\hline
\endlastfoot

1 & 28 & $(\prim^{210}, \prim^{34}, \prim^{125}, \prim^{170}, \prim^{207})$ & $\prim^{85} x^3 + \prim^{119} x^{17} + \prim^{17} x^{33} + \prim^{187} x^{18} + \prim^{170} x^{34} + x^{48}$ \\
\hline
2 & 6 & $(\prim^{25}, \prim^{51}, \prim^{34}, \prim^{68}, \prim^{17})$ & $\prim^{25} x^3 + \prim^{51} x^{17} + \prim^{34} x^{33} + \prim^{68} x^{18} + \prim^{17} x^{34} + x^{48}$ \\
\hline
\end{longtable}
 }


\textbf{Tables mentioned in Remark~\ref{rem6.4}}

\begin{table}[H]
\small
\centering
\caption{APN Functions satisfying $h_1=0$ and $BC^q+B^qD\neq 0$ for $q=2$}
\label{tab:apn_q2}
\begin{tabular}{cl|cl}
\hline
\textbf{\#} & \textbf{Simplified APN Polynomial} & \textbf{\#} & \textbf{Simplified APN Polynomial} \\
\hline
1 & $(a+1)x^4 + ax^5 + ax^6$ & 15 & $(a+1)x^3 + ax^5 + x^6$ \\
2 & $x^4 + x^5 + ax^6$ & 16 & $(a+1)x^3 + (a+1)x^5 + (a+1)x^6$ \\
3 & $(a+1)x^3 + ax^4 + ax^5 + ax^6$ & 17 & $(a+1)x^3 + (a+1)x^5 + x^6$ \\
4 & $(a+1)x^3 + x^4 + (a+1)x^5 + ax^6$ & 18 & $(a+1)x^3 + x^5 + x^6$ \\
5 & $ax^4 + (a+1)x^5 + (a+1)x^6$ & 19 & $ax^3 + ax^4 + (a+1)x^6$ \\
6 & $x^4 + x^5 + (a+1)x^6$ & 20 & $ax^3 + ax^4 + x^6$ \\
7 & $ax^3 + x^4 + ax^5 + (a+1)x^6$ & 21 & $ax^3 + (a+1)x^4 + (a+1)x^6$ \\
8 & $ax^3 + (a+1)x^4 + (a+1)x^5 + (a+1)x^6$ & 22 & $ax^3 + (a+1)x^4 + x^6$ \\
9 & $(a+1)x^3 + ax^4 + ax^6$ & 23 & $ax^3 + x^4$ \\
10 & $(a+1)x^3 + ax^4$ & 24 & $ax^3 + ax^5 + (a+1)x^6$ \\
11 & $(a+1)x^3 + (a+1)x^4 + ax^6$ & 25 & $ax^3 + ax^5 + x^6$ \\
12 & $(a+1)x^3 + (a+1)x^4$ & 26 & $ax^3 + (a+1)x^5 + (a+1)x^6$ \\
13 & $(a+1)x^3 + x^4$ & 27 & $ax^3 + (a+1)x^5 + x^6$ \\
14 & $(a+1)x^3 + ax^5 + ax^6$ & 28 & $ax^3 + x^5$ \\
\hline
\end{tabular}
\end{table}

\begin{longtable}{|c|p{0.8\textwidth}|}

\caption{APN Functions satisfying $h_1=0$, $BC^q+B^qD\neq 0$, 
and the exceptional condition $\gcd(a_2,a_0)\neq 1$ with 
$\mathcal{C}_0 \subseteq \pi_1 \cup \pi_2$ for $q=4$}
\label{tab:apn_q4} \\

\hline
\textbf{\#} & \textbf{Polynomial} \\
\hline
\endhead

\hline
\multicolumn{2}{|r|}{{Continued on next page}} \\
\endfoot

\hline
\endlastfoot

1 & $ax^{3} + ax^{5} + (a^2 + a + 1)x^{6} + a^2x^{10} + x^{12}$ \\
2 & $ax^{3} + ax^{5} + (a^2 + a + 1)x^{6} + (a^3 + a^2 + 1)x^{10} + x^{12}$ \\
3 & $ax^{3} + ax^{5} + (a^2 + a + 1)x^{6} + (a^3 + 1)x^{10} + x^{12}$ \\
4 & $ax^{3} + ax^{5} + ax^{9} + (a^2 + 1)x^{6} + (a + 1)x^{10} + x^{12}$ \\
5 & $ax^{3} + ax^{5} + ax^{9} + (a^2 + 1)x^{6} + (a^2 + a)x^{10} + x^{12}$ \\
6 & $ax^{3} + ax^{5} + ax^{9} + (a^2 + 1)x^{6} + (a^2 + 1)x^{10} + x^{12}$ \\
7 & $ax^{3} + ax^{5} + a^2x^{9} + x^{6} + a^3x^{10} + x^{12}$ \\
8 & $ax^{3} + ax^{5} + a^2x^{9} + x^{6} + (a^2 + a + 1)x^{10} + x^{12}$ \\
9 & $ax^{3} + ax^{5} + a^2x^{9} + x^{6} + (a^3 + a^2 + a + 1)x^{10} + x^{12}$ \\
10 & $ax^{3} + ax^{5} + (a^2 + a)x^{9} + (a + 1)x^{6} + a^3x^{10} + x^{12}$ \\
11 & $ax^{3} + ax^{5} + (a^2 + a)x^{9} + (a + 1)x^{6} + (a^3 + a + 1)x^{10} + x^{12}$ \\
12 & $ax^{3} + ax^{5} + (a^2 + a)x^{9} + (a + 1)x^{6} + (a^3 + a)x^{10} + x^{12}$ \\
13 & $ax^{3} + ax^{5} + (a^2 + a)x^{9} + (a + 1)x^{6} + (a^2 + a + 1)x^{10} + x^{12}$ \\
14 & $ax^{3} + ax^{5} + (a^2 + a)x^{9} + (a + 1)x^{6} + (a^3 + a^2 + a + 1)x^{10} + x^{12}$ \\
15 & $ax^{3} + ax^{5} + (a^2 + a)x^{9} + (a + 1)x^{6} + x^{10} + x^{12}$ \\
16 & $ax^{3} + ax^{5} + (a^3 + a + 1)x^{9} + a^3x^{10} + x^{12}$ \\
17 & $ax^{3} + ax^{5} + (a^3 + a + 1)x^{9} + (a + 1)x^{10} + x^{12}$ \\
18 & $ax^{3} + ax^{5} + (a^3 + a + 1)x^{9} + (a^2 + 1)x^{10} + x^{12}$ \\
19 & $ax^{3} + ax^{5} + (a^3 + a + 1)x^{9} + (a^3 + a^2 + a + 1)x^{10} + x^{12}$ \\
20 & $ax^{3} + ax^{5} + (a^2 + 1)x^{9} + (a^3 + a^2 + a)x^{6} + a^3x^{10} + x^{12}$ \\
21 & $ax^{3} + ax^{5} + (a^2 + 1)x^{9} + (a^3 + a^2 + a)x^{6} + (a^2 + a + 1)x^{10} + x^{12}$ \\
22 & $ax^{3} + ax^{5} + (a^2 + 1)x^{9} + (a^3 + a^2 + a)x^{6} + (a^3 + a^2 + a + 1)x^{10} + x^{12}$ \\
23 & $ax^{3} + ax^{5} + (a^3 + a)x^{9} + (a^3 + a^2 + a + 1)x^{6} + a^2x^{10} + x^{12}$ \\
24 & $ax^{3} + ax^{5} + (a^3 + a)x^{9} + (a^3 + a^2 + a + 1)x^{6} + (a^3 + a^2 + 1)x^{10} + x^{12}$ \\
25 & $ax^{3} + ax^{5} + (a^3 + a)x^{9} + (a^3 + a^2 + a + 1)x^{6} + (a^3 + 1)x^{10} + x^{12}$ \\
26 & $ax^{3} + ax^{5} + (a^3 + a^2 + a)x^{9} + (a^3 + 1)x^{6} + (a + 1)x^{10} + x^{12}$ \\
27 & $ax^{3} + ax^{5} + (a^3 + a^2 + a)x^{9} + (a^3 + 1)x^{6} + (a^2 + a)x^{10} + x^{12}$ \\
28 & $ax^{3} + ax^{5} + (a^3 + a^2 + a)x^{9} + (a^3 + 1)x^{6} + (a^2 + 1)x^{10} + x^{12}$ \\
29 & $ax^{3} + ax^{5} + (a^3 + a^2 + a + 1)x^{9} + (a^2 + a)x^{6} + a^2x^{10} + x^{12}$ \\
30 & $ax^{3} + ax^{5} + (a^3 + a^2 + a + 1)x^{9} + (a^2 + a)x^{6} + (a^3 + a + 1)x^{10} + x^{12}$ \\
31 & $ax^{3} + ax^{5} + (a^3 + a^2 + a + 1)x^{9} + (a^2 + a)x^{6} + (a^3 + a)x^{10} + x^{12}$ \\
32 & $ax^{3} + ax^{5} + (a^3 + a^2 + a + 1)x^{9} + (a^2 + a)x^{6} + (a^3 + a^2 + 1)x^{10} + x^{12}$ \\
33 & $ax^{3} + ax^{5} + (a^3 + a^2 + a + 1)x^{9} + (a^2 + a)x^{6} + (a^3 + 1)x^{10} + x^{12}$ \\
34 & $ax^{3} + ax^{5} + (a^3 + a^2 + a + 1)x^{9} + (a^2 + a)x^{6} + x^{10} + x^{12}$ \\
35 & $ax^{3} + ax^{5} + (a^3 + a^2 + 1)x^{9} + a^2x^{6} + a^2x^{10} + x^{12}$ \\
36 & $ax^{3} + ax^{5} + (a^3 + a^2 + 1)x^{9} + a^2x^{6} + (a + 1)x^{10} + x^{12}$ \\
37 & $ax^{3} + ax^{5} + (a^3 + a^2 + 1)x^{9} + a^2x^{6} + (a^2 + a)x^{10} + x^{12}$ \\
38 & $ax^{3} + ax^{5} + (a^3 + a^2 + 1)x^{9} + a^2x^{6} + (a^2 + 1)x^{10} + x^{12}$ \\
39 & $ax^{3} + ax^{5} + (a^3 + a^2 + 1)x^{9} + a^2x^{6} + (a^3 + a^2 + 1)x^{10} + x^{12}$ \\
40 & $ax^{3} + ax^{5} + (a^3 + a^2 + 1)x^{9} + a^2x^{6} + (a^3 + 1)x^{10} + x^{12}$ \\
41 & $ax^{3} + ax^{5} + (a^3 + 1)x^{9} + ax^{6} + (a^3 + a + 1)x^{10} + x^{12}$ \\
42 & $ax^{3} + ax^{5} + (a^3 + 1)x^{9} + ax^{6} + (a^3 + a)x^{10} + x^{12}$ \\
43 & $ax^{3} + ax^{5} + (a^3 + 1)x^{9} + ax^{6} + x^{10} + x^{12}$ \\
44 & $ax^{3} + ax^{5} + x^{9} + a^3x^{6} + (a^3 + a + 1)x^{10} + x^{12}$ \\
45 & $ax^{3} + ax^{5} + x^{9} + a^3x^{6} + (a^3 + a)x^{10} + x^{12}$ \\
46 & $ax^{3} + ax^{5} + x^{9} + a^3x^{6} + x^{10} + x^{12}$ \\
47 & $ax^{3} + a^2x^{5} + (a^3 + a)x^{6} + ax^{10} + x^{12}$ \\
48 & $ax^{3} + a^2x^{5} + (a^3 + a)x^{6} + a^3x^{10} + x^{12}$ \\
44 & $ax^{3} + a^2x^{5} + (a^3 + a)x^{6} + (a + 1)x^{10} + x^{12}$ \\
50 & $ax^{3} + a^2x^{5} + (a^3 + a)x^{6} + (a^2 + a)x^{10} + x^{12}$ \\
\hline
\multicolumn{2}{|c|}{\textbf{... Rows 51-9070 Omitted (Total 9120 entries) ...}} \\
\hline
9071 & $x^{3} + (a^3 + a + 1)x^{5} + a^3x^{9} + (a^2 + a)x^{6} + (a^3 + a^2)x^{10} + x^{12}$ \\
9072 & $x^{3} + (a^3 + a + 1)x^{5} + a^3x^{9} + (a^2 + a)x^{6} + (a^3 + a^2 + 1)x^{10} + x^{12}$ \\
9073 & $x^{3} + (a^3 + a + 1)x^{5} + a^3x^{9} + (a^2 + a)x^{6} + (a^3 + a)x^{10} + x^{12}$ \\
9074 & $x^{3} + (a^3 + a + 1)x^{5} + a^3x^{9} + (a^2 + a)x^{6} + (a + 1)x^{10} + x^{12}$ \\
9075 & $x^{3} + (a^3 + a + 1)x^{5} + a^3x^{9} + (a^2 + a)x^{6} + (a^2 + a)x^{10} + x^{12}$ \\
9076 & $x^{3} + (a^3 + a + 1)x^{5} + a^3x^{9} + (a^2 + a)x^{6} + (a^2 + 1)x^{10} + x^{12}$ \\
9077 & $x^{3} + (a^3 + a + 1)x^{5} + (a^3 + a^2)x^{9} + (a^3 + a + 1)x^{6} + (a^3 + a^2)x^{10} + x^{12}$ \\
9078 & $x^{3} + (a^3 + a + 1)x^{5} + (a^3 + a^2)x^{9} + (a^3 + a + 1)x^{6} + (a + 1)x^{10} + x^{12}$ \\
9079 & $x^{3} + (a^3 + a + 1)x^{5} + (a^3 + a^2)x^{9} + (a^3 + a + 1)x^{6} + (a^2 + a)x^{10} + x^{12}$ \\
9080 & $x^{3} + (a^3 + a + 1)x^{5} + (a^3 + a^2)x^{9} + (a^3 + a + 1)x^{6} + (a^3 + a)x^{10} + x^{12}$ \\
9081 & $x^{3} + (a^3 + a + 1)x^{5} + (a^3 + a^2 + a)x^{9} + (a^2 + a + 1)x^{6} + (a^2 + 1)x^{10} + x^{12}$ \\
9082 & $x^{3} + (a^3 + a + 1)x^{5} + (a^3 + a^2 + a)x^{9} + (a^2 + a + 1)x^{6} + (a^3 + a^2 + a)x^{10} + x^{12}$ \\
9083 & $x^{3} + (a^3 + a + 1)x^{5} + (a^3 + a^2 + a)x^{9} + (a^2 + a + 1)x^{6} + (a^3 + a)x^{10} + x^{12}$ \\
9084 & $x^{3} + (a^3 + a + 1)x^{5} + (a^3 + a^2 + a)x^{9} + (a^2 + a + 1)x^{6} + (a^3 + a^2 + 1)x^{10} + x^{12}$ \\
9085 & $x^{3} + (a^3 + a + 1)x^{5} + (a^3 + a^2 + a + 1)x^{9} + a^3x^{6} + ax^{10} + x^{12}$ \\
9086 & $x^{3} + (a^3 + a + 1)x^{5} + (a^3 + a^2 + a + 1)x^{9} + a^3x^{6} + a^3x^{10} + x^{12}$ \\
9087 & $x^{3} + (a^3 + a + 1)x^{5} + (a^3 + a^2 + a + 1)x^{9} + a^3x^{6} + (a + 1)x^{10} + x^{12}$ \\
9088 & $x^{3} + (a^3 + a + 1)x^{5} + (a^3 + a^2 + a + 1)x^{9} + a^3x^{6} + (a^2 + a)x^{10} + x^{12}$ \\
9089 & $x^{3} + (a^3 + a + 1)x^{5} + (a^3 + a^2 + a + 1)x^{9} + a^3x^{6} + (a^2 + 1)x^{10} + x^{12}$ \\
9090 & $x^{3} + (a^3 + a + 1)x^{5} + (a^3 + a^2 + a + 1)x^{9} + a^3x^{6} + (a^3 + a^2)x^{10} + x^{12}$ \\
9091 & $x^{3} + (a^3 + a + 1)x^{5} + (a^3 + a^2 + a + 1)x^{9} + a^3x^{6} + (a^3 + a^2 + a)x^{10} + x^{12}$ \\
9092 & $x^{3} + (a^3 + a + 1)x^{5} + (a^3 + a^2 + a + 1)x^{9} + a^3x^{6} + (a^3 + a + 1)x^{10} + x^{12}$ \\
9093 & $x^{3} + (a^3 + a + 1)x^{5} + (a^3 + a^2 + a + 1)x^{9} + a^3x^{6} + (a^3 + a)x^{10} + x^{12}$ \\
9094 & $x^{3} + (a^3 + a + 1)x^{5} + (a^3 + a^2 + a + 1)x^{9} + a^3x^{6} + x^{10} + x^{12}$ \\
9095 & $x^{3} + (a^3 + a + 1)x^{5} + (a^3 + a^2 + 1)x^{9} + (a^3 + a^2 + a)x^{6} + (a^3 + a^2)x^{10} + x^{12}$ \\
9096 & $x^{3} + (a^3 + a + 1)x^{5} + (a^3 + a^2 + 1)x^{9} + (a^3 + a^2 + a)x^{6} + (a + 1)x^{10} + x^{12}$ \\
9097 & $x^{3} + (a^3 + a + 1)x^{5} + (a^3 + a^2 + 1)x^{9} + (a^3 + a^2 + a)x^{6} + (a^2 + a)x^{10} + x^{12}$ \\
9098 & $x^{3} + (a^3 + a + 1)x^{5} + (a^3 + a^2 + 1)x^{9} + (a^3 + a^2 + a)x^{6} + (a^2 + 1)x^{10} + x^{12}$ \\
9099 & $x^{3} + (a^3 + a + 1)x^{5} + (a^3 + a^2 + 1)x^{9} + (a^3 + a^2 + a)x^{6} + (a^3 + a)x^{10} + x^{12}$ \\
9100 & $x^{3} + (a^3 + a + 1)x^{5} + (a^3 + a^2 + 1)x^{9} + (a^3 + a^2 + a)x^{6} + (a^3 + 1)x^{10} + x^{12}$ \\
9101 & $x^{3} + (a^3 + a + 1)x^{5} + (a^3 + 1)x^{9} + (a^2 + 1)x^{6} + (a^3 + a + 1)x^{10} + x^{12}$ \\
9102 & $x^{3} + (a^3 + a + 1)x^{5} + (a^3 + 1)x^{9} + (a^2 + 1)x^{6} + (a^3 + a)x^{10} + x^{12}$ \\
9103 & $x^{3} + (a^3 + a + 1)x^{5} + (a^3 + 1)x^{9} + (a^2 + 1)x^{6} + x^{10} + x^{12}$ \\
9104 & $x^{3} + (a^3 + a + 1)x^{5} + x^{9} + (a^3 + a^2)x^{6} + ax^{10} + x^{12}$ \\
9105 & $x^{3} + (a^3 + a + 1)x^{5} + x^{9} + (a^3 + a^2)x^{6} + a^3x^{10} + x^{12}$ \\
9106 & $x^{3} + (a^3 + a + 1)x^{5} + x^{9} + (a^3 + a^2)x^{6} + (a^2 + a + 1)x^{10} + x^{12}$ \\
9107 & $x^{3} + (a^3 + a + 1)x^{5} + x^{9} + (a^3 + a^2)x^{6} + (a^3 + a^2 + a)x^{10} + x^{12}$ \\
9108 & $x^{3} + (a^3 + a + 1)x^{5} + x^{9} + (a^3 + a^2)x^{6} + (a^3 + 1)x^{10} + x^{12}$ \\
9109 & $x^{3} + (a^3 + a + 1)x^{5} + x^{9} + (a^3 + a^2)x^{6} + x^{10} + x^{12}$ \\
9110 & $x^{3} + (a^3 + 1)x^{5} + (a^3 + a^2 + a)x^{6} + (a^2 + a + 1)x^{10} + x^{12}$ \\
9111 & $x^{3} + (a^3 + 1)x^{5} + (a^3 + a^2 + a)x^{6} + (a^3 + a^2 + a)x^{10} + x^{12}$ \\
9112 & $x^{3} + (a^3 + 1)x^{5} + (a^3 + a^2 + a)x^{6} + (a^3 + a)x^{10} + x^{12}$ \\
9113 & $x^{3} + (a^3 + 1)x^{5} + (a^3 + a^2 + a + 1)x^{9} + (a^2 + 1)x^{6} + (a^2 + a)x^{10} + x^{12}$ \\
9114 & $x^{3} + (a^3 + 1)x^{5} + (a^3 + a^2 + a + 1)x^{9} + (a^2 + 1)x^{6} + (a^3 + a + 1)x^{10} + x^{12}$ \\
9115 & $x^{3} + (a^3 + 1)x^{5} + (a^3 + a^2 + a + 1)x^{9} + (a^2 + 1)x^{6} + (a^3 + a^2 + 1)x^{10} + x^{12}$ \\
9116 & $x^{3} + (a^3 + 1)x^{5} + (a^3 + a^2 + 1)x^{9} + ax^{6} + (a^2 + a + 1)x^{10} + x^{12}$ \\
9117 & $x^{3} + (a^3 + 1)x^{5} + (a^3 + a^2 + 1)x^{9} + ax^{6} + (a^3 + a^2 + a)x^{10} + x^{12}$ \\
9118 & $x^{3} + (a^3 + 1)x^{5} + (a^3 + a^2 + 1)x^{9} + ax^{6} + (a^3 + a^2 + 1)x^{10} + x^{12}$ \\
9119 & $x^{3} + (a^3 + 1)x^{5} + (a^3 + 1)x^{9} + (a^3 + a^2 + a + 1)x^{6} + (a^3 + a + 1)x^{10} + x^{12}$ \\
9120 & $x^{3} + (a^3 + 1)x^{5} + (a^3 + 1)x^{9} + (a^3 + a^2 + a + 1)x^{6} + (a^3 + a)x^{10} + x^{12}$ \\
\end{longtable}


\begin{thebibliography}{99}

\bibitem{BS91}
E. Biham, A. Shamir, {\em Differential cryptanalysis of DES-like cryptosystems}, J. Cryptology {\bf 4} (1991), 3--72.

\bibitem{BKRS20}
L. Budaghyan, N. S. Kaleyski, C. Riera, P. St\u{a}nic\u{a}, {\em Partially APN functions with APN-like polynomial representations}, Des. Codes Cryptogr. {\bf 88} (2020), 1159--1177.

\bibitem{BC08}
L. Budaghyan, C. Carlet, {\em Classes of quadratic APN trinomials and hexanomials and related structures}, IEEE Trans. Inf. Theory {\bf 54} (2008), 2354--2357.

\bibitem{MR2206396}
A. Cafure, G. Matera, {\em Improved explicit estimates on the number of solutions of equations over a finite field}, Finite Fields Appl. {\bf 12} (2006), 155--185.

\bibitem{lisonek}
B. Chase, P. Lisoněk, {\em Kim-type APN functions are affine equivalent to Gold functions}, Cryptogr. Commun. {\bf 13} (2021), 981--993.

\bibitem{Di06}
J. F. Dillon, {\em APN polynomials and related codes}, Polynomials over Finite Fields and Applications, Banff International Research Station, Nov. 2006.

\bibitem{Ha77}
R. Hartshorne, {\em Algebraic Geometry}, Graduate Texts in Mathematics {\bf 52}, Springer-Verlag, New York, 1977.

\bibitem{Janwa}
H. Janwa, R. M. Wilson, {\em Hyperplane sections of Fermat varieties in $\mathbb{P}^3$ in char. $2$ and some applications to cyclic codes}, in: Applied Algebra, Algebraic Algorithms and Error-Correcting Codes (G. Cohen, T. Mora, O. Moreno, eds.), Lecture Notes in Comput. Sci. {\bf 673}, Springer, Berlin, 1993, 180--194.

\bibitem{li}
K. Li, C. Li, T. Helleseth, L. Qu, {\em A complete characterization of the APN property of a class of quadrinomials}, IEEE Trans. Inf. Theory {\bf 67} (2021), 7535--7549.

\bibitem{KN93}
K. Nyberg, {\em Differentially uniform mappings for cryptography}, in: Advances in Cryptology -- EUROCRYPT '93 (T. Helleseth, ed.), Lecture Notes in Comput. Sci. {\bf 765}, Springer, Heidelberg, 1994, 55--64.

\bibitem{GithubPS25}
P. St\u{a}nic\u{a}, {\em Hexanomial APN search}, \texttt{https://github.com/pstanica/hexanomial-apn-search}, 2025.

\end{thebibliography}
\end{document}